\newcommand{\<}{\langle}
\newcommand{\C}{\mathbb{C}}
\newcommand{\K}{\mathbb{K}}
\newcommand{\Z}{\mathbb{Z}}
\DeclareMathOperator{\rk}{rk}
\DeclareMathOperator{\cork}{cork}
\DeclareMathOperator{\SL}{SL}
\DeclareMathOperator{\PSL}{PSL}
\DeclareMathOperator{\PGL}{PGL}
\DeclareMathOperator{\pSL}{(P)SL}
\DeclareMathOperator{\GL}{GL}
\DeclareMathOperator{\End}{End}
\DeclareMathOperator{\Id}{Id}
\newtheorem*{theorem*}{Theorem}
\newtheorem*{proposition}{Proposition}
\newtheorem*{notation*}{Notation}
\newtheorem*{definition*}{Definition}
\newtheorem{lemma}{Lemma}
\newtheorem*{conjecture*}{Conjecture}
\newcommand{\theoremname}{}
\newtheorem*{generictheorem*}{\theoremname}
\theoremstyle{remark}
\newtheorem*{notationinproof*}{Notation}
\newtheorem{step}{Step}[subsubsection]
\renewcommand{\thestep}{\arabic{step}}
\newenvironment{proofclaim}{\begin{proof}[Proof of Step \thestep]}{\end{proof}}
\title{Rank $3$ Bingo}
\author{Alexandre Borovik and Adrien Deloro}
\begin{document}

\renewcommand{\o}{^{\circ}}
\renewcommand{\>}{\rangle}
\renewcommand{\L}{\mathbb{L}}

\maketitle

\begin{abstract}
{We classify irreducible actions of connected groups of finite Morley rank on abelian groups of Morley rank $3$.}
\end{abstract}

\section{The Result and its Context}

\subsection{The context}
The present article deals with representations of groups of finite Morley rank. Morley rank is the logician's coarse approach to Zariski dimension; a good general reference on the topic is \cite{BNGroups}, where the theory is systematically developed, and the reader not too familiar with the subject may start there. Following one's algebraic intuition is another possibility, as groups of finite Morley rank behave in many respects very much like algebraic groups over algebraically closed fields do. This intuition shaped the famous Cherlin-Zilber Conjecture: \emph{simple infinite groups of finite Morley ranks are simple algebraic groups over algebraically closed fields}.

However, since there is no rational structure around, the Cherlin-Zilber Conjecture is still an open question, and the present setting is broader than the theory of algebraic groups. On the other hand, finite groups are groups of Morley rank $0$, and it had happened that methods of classification of finite simple groups could be successfully applied to the general case of groups of finite Morley rank. This became, over the last 20 years, the principal line of development and resulted, in particular, in confirmation of the Cherlin-Zilber conjecture in a number of important cases, see \cite{ABCSimple}.

This paper (together with \cite{BBGroups,BCPermutation,CDSmall,DActions,WRank4}) signals a shift in the direction of research in the theory of groups of finite Morley rank: instead of the study of their internal structure  we focus on the study of \emph{actions} of groups of finite Morley rank.

Indeed groups of finite Morley rank naturally arise in model theory as a kind of Galois groups and have an action naturally attached to them. More precisely, any uncountably categorical structure is controlled by certain definable groups of permutations which have finite Morley rank, by definability. This observation leads to the concept of a \emph{binding group} \cite[\S2.5]{PStable}, introduced by Zilber and developed in other contexts by Hrushovski, an important special case being that of Lie groups in the Picard-Vessiot theory of linear differential equations.

But as another consequence of the absence of a rational structure, representations (permutation and linear) in the finite Morley rank category must be studied by elementary means. The topic being rather new we deal in this paper with a basic case: actions on a module of rank $3$, for which we provide a classification.

\subsection{The result}

\begin{theorem*}
Let $G$ be a connected, non-soluble group of finite Morley rank acting definably and faithfully on an abelian group $V$ of Morley rank $3$. Suppose that $V$ is $G$-minimal. Then:
\begin{itemize}
\item
either $G = \PSL_2(\K) \times Z(G)$ where $\PSL_2(\K)$ acts in its adjoint action on $V\simeq \K_+^3$,
\item
or $G = \SL_3(\K) \ast Z(G)$ in its natural action on $V \simeq \K_+^3$,
\item
or $G$ is a simple bad group of rank $3$, and $V$ has odd prime exponent.
\end{itemize}
\end{theorem*}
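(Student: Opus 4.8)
The plan is to first determine the arithmetic nature of $V$, then to split the analysis according to whether $G$ interprets an infinite field: in the good case one linearises $V$ and reads $G$ off from the classification of irreducible linear groups in dimension $3$, while the residual configuration is collected as a simple bad group. For the reductions on $V$: since $V$ is $G$-minimal and $G$ connected, $V$ is connected, and decomposing a connected abelian group of finite Morley rank into its divisible part and its part of bounded exponent — both characteristic, hence $G$-invariant — $G$-minimality forces $V$ to be divisible (a $\mathbb{Q}$-vector space) or elementary abelian of prime exponent $p$ (an $\mathbb{F}_p$-vector space). By Clifford theory any infinite definable normal subgroup $N \trianglelefteq G$ has $V$ splitting into its finitely many $N$-homogeneous components, which $G$ permutes; connectedness of $G$ fixes each, so $V$ is $N$-homogeneous, and $\rk V = 3$ leaves only the patterns ``$V$ is $N$-minimal'' or ``$V \cong W^{\oplus 3}$ with $W$ an $N$-minimal group of rank $1$''. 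Feeding an infinite definable \emph{abelian} normal $N$ into Zilber's Field Theorem yields an interpretable algebraically closed field $\K$ with $W \cong \K_+$ and $N \hookrightarrow \K^\times$; as a connected group of finite Morley rank acts trivially on any interpretable field on which it acts definably by automorphisms, $G$ must act $\K$-linearly, whence $\rk\K = 1$, $V \cong \K^3$ as a $\K[G]$-module, and $N$ lands in the scalars, i.e. $N \le Z(G)$. (The alternative ``$V$ is $N$-minimal'' is excluded, as it would place $G$ inside $\GL_1(\K) = \K^\times$ and make it soluble.)

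\emph{The dichotomy.} Either $G$ is a bad group — all its connected definable soluble subgroups nilpotent, equivalently no interpretable infinite field — or it is not. Suppose it is not. By the structure theory of connected non-soluble groups of finite Morley rank (their minimal non-soluble subgroups, and the recognition theorems for $\SL_2$), and since $\rk G$ is bounded ($G$ acting faithfully on a group of rank $3$), $G$ contains a definable subgroup $H \cong \SL_2(\K)$ or $\PSL_2(\K)$ over an interpretable algebraically closed field $\K$. The $H$-module $V$ is rational, so $V|_H$ is a direct sum of standard $\mathrm{SL}_2$-modules; since $H$ acts non-trivially, one summand has $\K$-dimension at least $2$, and then $3 = \rk V \ge 2\,\rk\K$ gives $\rk\K = 1$, so $V|_H$ is the adjoint module $\mathfrak{sl}_2(\K)$ or $\mathrm{nat}\oplus\mathrm{triv}$. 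In either case $V$ inherits a $\K$-vector-space structure; $\End_H(V)$ is a definable division ring, hence (finite Morley rank) a field, which contains the scalars $\K$ and, being algebraic over $\K$, equals $\K$. Transporting this structure along the $G$-action — using again the rigidity of interpretable fields, after passing to the normal closure of $H$ and noting that at most one quasisimple factor can act non-trivially on a $3$-dimensional module — yields $V \cong \K^3$ with $G \le \GL_3(\K)$ and $\rk\K = 1$.

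\emph{Endgame, and the bad case.} With $G$ a connected non-soluble definable subgroup of $\GL_3(\K)$, $\rk\K = 1$, irreducible on $\K^3$, the field being pure gives $G$ algebraic, and the classical list of irreducible algebraic subgroups of $\GL_3$ applies: modulo its centre $Z(G) \le \End_G(V)^\times = \K^\times$, the group $G$ is semisimple, irreducibility tensor-decomposes $V$ into a single quasisimple factor (as $\dim V = 3$ is prime, the others acting trivially hence being trivial), and that factor is $\SL_3(\K)$ in its natural action or $\mathrm{SO}_3(\K) \cong \PSL_2(\K)$ in the adjoint action — irreducible exactly in characteristic $\ne 2$ — the remaining shapes (reducible modules, other $3$-dimensional $\PSL_2$-actions) being killed by weight counting on $V$. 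Since $\mathrm{SO}_3$ meets the scalars trivially while $Z(\SL_3) = \mu_3$ is scalar, one gets $G = \PSL_2(\K) \times Z(G)$ respectively $G = \SL_3(\K) \ast Z(G)$. If instead $G$ is bad, it has no infinite abelian normal subgroup (such would produce a field by the first paragraph), so its soluble radical is finite central and $G$ is quasisimple; the small-rank analysis of bad groups then identifies it as a simple bad group of Morley rank $3$, while the torsion-free case (a Borel acting on the divisible $V$ would manufacture an interpretable field via Zilber) and the exponent-$2$ case (an involution/parity argument peculiar to bad groups) are excluded, leaving $V$ of odd prime exponent.

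The main obstacle is the linearisation in the non-bad case: extracting \emph{some} interpretable field is routine, but one must produce a single $\K$ acting as scalars on all of $V$ and centralising $G$, which means controlling the $N$-homogeneous components, bounding the number of quasisimple or $\SL_2$-type subgroups acting faithfully, and matching $\mathrm{char}\,\K$ to the exponent of $V$; the bad-case bookkeeping — certifying that no field is hidden and that the characteristic is an odd prime — is the secondary difficulty, and throughout the classifications in ranks $1$ and $2$ are used to dispatch sub-configurations.
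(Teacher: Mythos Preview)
Your proposal has a genuine gap at its core: the dichotomy ``either $G$ is bad, or $G$ contains a copy of $\SL_2(\K)$ or $\PSL_2(\K)$'' is not a known theorem in finite Morley rank. You write that this follows from ``the structure theory of connected non-soluble groups of finite Morley rank (their minimal non-soluble subgroups, and the recognition theorems for $\SL_2$)'', but no such recognition theorem exists in this generality; producing an algebraic subgroup inside an abstract group of finite Morley rank is essentially what the Cherlin--Zilber programme is about, and it is precisely what the paper is working to establish in this particular rank-$3$ setting. The paper's actual proof does \emph{not} assume any such subgroup: after the reductions it splits on the Pr\"ufer $2$-rank of $G$, uses the $N_\circ^\circ$-theory and Wiscons' rank-$4$ analysis to handle Pr\"ufer rank $0$ and $1$, and uses strongly-embedded-subgroup and classical-involution arguments in Pr\"ufer rank $2$. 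Even in cases where a copy of $\SL_2(\K)$ \emph{is} eventually found inside $G$, the paper spends several pages (Propositions~4, 6, 7, 8) turning that into the conclusion; your ``transporting this structure along the $G$-action'' hides all of this.

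Two further concrete errors compound the problem. First, you assert ``the field being pure gives $G$ algebraic''; but a field of Morley rank $1$ need not be pure (Hrushovski's strongly minimal expansions of algebraically closed fields are the standard counterexample), so one cannot directly invoke the algebraic classification of subgroups of $\GL_3$ --- the paper handles this via results of Poizat and Mustafin on definably linear groups. Second, your bad-case endgame (``the small-rank analysis of bad groups then identifies it as a simple bad group of Morley rank $3$'') is not a citation to anything: there is no off-the-shelf result saying a quasisimple bad group acting on a rank-$3$ module has rank $3$, and the paper's Proposition~2 is a careful argument (using Hrushovski's theorem, TI subgroups, and uniqueness principles) specific to this configuration. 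Your first paragraph on the arithmetic of $V$ and the treatment of infinite abelian normal subgroups is essentially correct and matches the paper's reductions, but everything after the dichotomy needs to be replaced by real arguments.
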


In the algebraic category, irreducible, three-dimensional representations are of course well-known. But this is the whole point: to prove that the pair $(G,V)$ lives in the algebraic group category. The principal  difficulties are related to the possibility of so-called \emph{bad groups}, on which we say more in the prerequisites.

Interestingly enough, our proof involves ideas from more or less all directions explored over almost forty years of groups of finite Morley rank. The present article is therefore the best opportunity we shall ever have to print our heartful thanks to all members of the ranked universe: Tuna, Christine, Oleg,
Ay\c{s}e, Jeffrey, Gregory, Luis-Jaime, Olivier, Ursula, Ehud,
the late \'Eric, James, Angus, Dugald, Yerulan,
Ali, Anand, Bruno, Katrin, Jules, P\i nar,
Frank, Joshua, and Boris
(with our apologies to whomever we forgot). The reader can play bingo with these names and match them against the various results we shall mention.

And of course, our special extra thanks to Ali, mayor of the Matematik Koy\"{u} at \c{S}irince, Turkey -- this is one more result proved there.

\subsection{Future directions}

The result of this paper deals with a configuration that arises in  bases of induction (on Morley rank) in proofs of more general results on representations in the finite Morley rank category. One of the examples is the following work-in-progress result by Berkman and the first author:

\begin{theorem*}[Berkman and Borovik, work in progress]
Let  $H$ and $V$ be connected groups of finite Morley rank and $V$  an elementary abelian $p$-group for $p\neq 2$ of Morley rank $n>2$. Assume that $H$ acts on $V$ definably, and the action is faithful and generically $n$-transitive.

Then there is an algebraically closed field $F$ such that $V\cong F^n$ and $H\cong\operatorname{GL}(V)$, and the action is the natural action.
\end{theorem*}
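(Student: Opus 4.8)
The plan is to argue by induction on $n$, with the base case $n=3$ supplied by the Theorem above. Suppose $H$ acts faithfully and generically $3$-transitively on $V$ with $\rk V=3$. Then the generic $H$-orbit on $V^{3}$ has rank $\rk V^{3}=9$, so $\rk H\ge 9$; a rank count (the one in the inductive step below) shows $V$ is $H$-minimal, so the Theorem applies. Its first alternative is impossible, since $\PSL_{2}(\K)\times Z(G)$ has Morley rank at most $3+1=4<9$ --- here $Z(G)$ embeds into $\End_{\PSL_{2}(\K)}(V)=\K$ by Schur's lemma, the adjoint module being absolutely irreducible in characteristic $\ne 2$; the third is impossible, a bad group of rank $3$ having rank $3<9$. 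So $H=\SL_{3}(\K)\ast Z(G)$, and since $\rk\SL_{3}(\K)=8$ while $Z(G)$ embeds into $\End_{\SL_{3}(\K)}(\K^{3})=\K$, we get $\rk Z(G)=1$; thus $Z(G)\o$ is the group of scalars and $H=\GL_{3}(\K)$ in its natural action. Take $F:=\K$.

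Now let $n\ge 4$ and assume the result for all smaller values. The decisive point is that $V$ is $H$-minimal; granting this, the argument closes at once. By Schur's lemma ($H$ being connected and $V$ a faithful $H$-minimal module), $E:=\End_{H}(V)$ is a definable division ring of finite Morley rank, hence a definable algebraically closed field; $V$ becomes an $E$-vector space of dimension $m:=n/\rk E$, and $H\le\GL_{E}(V)=\GL_{m}(E)$. A generic $n$-tuple of $V$ spans $V$ over $E$ (because $n\ge m$), so it has trivial stabiliser in $\GL_{m}(E)$ and hence in $H$; therefore $\rk H$ equals the rank of the generic orbit on $V^{n}$, namely $\rk V^{n}=n^{2}$. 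On the other hand $\rk H\le\rk\GL_{m}(E)=m^{2}\rk E=mn\le n^{2}$, with equality forcing $m=n$ and $\rk E=1$. Thus $H$ is a connected definable subgroup of full Morley rank $n^{2}$ in the connected group $\GL_{n}(E)$, so $H=\GL_{n}(E)$ acting naturally on $V\cong E^{n}$, and $F:=E$ is as desired.

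It remains to establish $H$-minimality. Suppose not, and let $W<V$ be a proper infinite connected definable $H$-submodule, chosen of minimal rank (so $W$ is itself $H$-minimal), and set $c_{1}:=\rk W$, $c_{2}:=\rk(V/W)$, so $c_{1}+c_{2}=n$ with $c_{1},c_{2}\ge 1$. The quotient $V/W$ carries a faithful, generically multiply transitive action of $H/C_{H}(V/W)$: a generic $n$-tuple of $V$ has no coordinate in $W$ (since $\rk W<n$) and projects onto a generic orbit in $(V/W)^{n}$, so $H$ acts generically $c_{2}$-transitively on $V/W$. By the inductive hypothesis when $c_{2}\ge 3$, and by a direct analysis when $c_{2}\le 2$, the induced group $H/C_{H}(V/W)$ is the general linear group of $V/W$ over a definable algebraically closed field, of Morley rank at most $c_{2}^{2}$. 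Meanwhile $C_{H}(V/W)$, acting trivially on $V/W$, embeds into $\mathrm{Hom}(V/W,W)\rtimes\mathrm{Aut}(W)$; as $W$ is $H$-minimal, Schur's lemma bounds the rank of the image of $H$ in $\mathrm{Aut}(W)$ by $c_{1}^{2}$, and a rank estimate gives $\rk\mathrm{Hom}(V/W,W)\le c_{1}c_{2}$, whence $\rk C_{H}(V/W)\le c_{1}^{2}+c_{1}c_{2}$. Therefore
\[
\rk H\;\le\;c_{2}^{2}+\bigl(c_{1}^{2}+c_{1}c_{2}\bigr)\;<\;c_{1}^{2}+2c_{1}c_{2}+c_{2}^{2}\;=\;(c_{1}+c_{2})^{2}\;=\;n^{2},
\]
contradicting $\rk H\ge\rk V^{n}=n^{2}$. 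Hence $V$ is $H$-minimal.

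The hard part is this last step, and more precisely the module theory over the finite Morley rank category that it rests on: a usable Schur's lemma (the ring $\End_{H}(V)$ is genuinely subtler here than classically, and need not be a division ring unless the acting group is large enough and has no nonzero fixed point), the theorem that a definable skew field of finite Morley rank is an algebraically closed field, and rank bounds on the definable $\mathrm{Hom}$- and $\mathrm{Aut}$-groups attached to $H$-subquotients of $V$ --- and it is exactly here that a rank-$3$ subquotient is disposed of by the Theorem above, and that the Hrushovski-type pathologies (minimal modules that are \emph{not} vector spaces over any definable field) must be ruled out. By contrast, the bad groups that make the rank-$3$ classification delicate give no trouble in the inductive step: generic $n$-transitivity imposes $\rk H\ge n^{2}$, far too large for a bad group or for any of the small exceptional configurations, so the concern about exotic simple groups is absorbed once and for all into the base case.
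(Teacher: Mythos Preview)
The paper contains no proof of this statement. It appears in \S1.3 (``Future directions'') explicitly labelled as \emph{work in progress} by Berkman and Borovik, and is quoted only to motivate the paper's main theorem as a potential base case for inductive arguments of this kind. There is therefore no proof in the paper to compare your attempt against.

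On your sketch itself: the overall architecture --- induction on $n$, a rank count to force $H$-minimality, then Schur's lemma and linearisation --- is reasonable, and your handling of the base case $n=3$ via the paper's theorem is correct. But the points you yourself flag as ``the hard part'' are genuine gaps rather than routine technicalities. The bound $\rk\operatorname{Hom}(V/W,W)\le c_{1}c_{2}$ is unjustified: with no field structure yet in hand, $\operatorname{Hom}(V/W,W)$ is not a definable object of finite Morley rank, and bounding the rank of the unipotent kernel $C_{H}(W)\cap C_{H}(V/W)$ is real work. Likewise, that $\End_{H}(V)$ is a \emph{definable} division ring is not automatic in this category; and the phrase ``a direct analysis when $c_{2}\le 2$'' hides the need for rank-$1$ and rank-$2$ analogues of the very statement you are trying to prove. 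These difficulties are exactly why the result is listed as work in progress rather than derived as a corollary of the paper's theorem.
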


This theorem, in its turn, is needed for confirming a conjecture that improves bounds from \cite{BCPermutation} and makes them sharp.

\begin{conjecture*}
Let $G$ be a connected group of finite Morley rank acting faithfully, definably, transitively and generically $k$-transitively on a set $X$ of Morley rank $n$ {\rm (}that is, has an orbit on $X^{k}$ of the same Morley rank as $X^{k}${\rm )}. Then $k \leqslant n+2$, and if, in addition, $k=n+2$ then the pair $(G,X)$ is equivalent to the projective general linear group $\PGL_{n+1}(F)$ acting on the projective space ${\mathbb{P}}^n(F)$ for some algebraically closed field $F$.
\end{conjecture*}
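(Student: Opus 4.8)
The plan is to prove both assertions at once, by showing that \emph{if} $k\ge n+2$ then the pair is forced to be $\big(\PGL_{n+1}(F),\mathbb{P}^n(F)\big)$, which in turn pins $k=n+2$. The reduction runs to a linear situation governed by the Berkman--Borovik theorem, whose own induction on $n$ bottoms out at the rank-$3$ module case settled by the Theorem of this paper. First I record the genericity calculus: writing $S_j$ for the stabilizer of a generic $j$-tuple, generic $j$-transitivity means $\rk(S_j)=\rk(G)-jn$, it descends to all smaller degrees, and passing to $S_j\o$ turns a generically $(j{+}m)$-transitive action of $G$ into a generically $m$-transitive action of $S_j\o$ on the generic locus of $X$. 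For $k\ge n+2$ the action is at least generically $3$-transitive, already enough rigidity to begin building a projective geometry; the degenerate base cases $n=1$ (a sharply $3$-transitive action on a rank-$1$ set, giving $\PGL_2(F)$ on $\mathbb{P}^1(F)$) and small $n$ are treated directly and seed an induction on $n$.

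The core is the reconstruction. Fix a generic point $a$ and set $P=(G_a)\o$, a connected group acting generically $(k-1)$-transitively on $X\setminus\{a\}$. The decisive step is to exhibit inside $P$ the parabolic pattern $P=U\rtimes L$: a connected abelian normal subgroup $U$ of rank $n$ acting simply transitively on a generic affine chart of $X$ (thereby identifying that chart with $U$ as an abelian group of rank $n$), together with a complement $L$ acting on $U$. Stabilizing one further generic point cuts $P$ down to a conjugate of $L$, which then acts on the rank-$n$ module $U$ generically $(k-2)$-transitively. With $k-2\ge n$, the Berkman--Borovik theorem applies to (the faithful image of) this action and forces it to be $\GL_n(F)$ on $F^n$ for an algebraically closed field $F$; in particular it manufactures the field $F$ and shows $U\simeq F_+^n$ as an $L$-module. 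With $F$ and the linear action in hand I would then coordinatize $X$: the $G$-invariant incidence data on $X$ (collinearity read off from the orbits of two- and three-point stabilizers) endow $X$ with a definable projective-space structure over $F$, a definable form of the fundamental theorem of projective geometry identifies it with $\mathbb{P}^n(F)$, and matching the two actions yields $G=\PGL_{n+1}(F)$ in its natural action.

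The bound $k\le n+2$ falls out of the very same reduction. The stabilizer $L$ acts on the rank-$n$ module $U$ generically $(k-2)$-transitively; were $k-2>n$, this action would in particular be generically $n$-transitive, hence equal to $\GL_n(F)$ by Berkman--Borovik, yet $\GL_n(F)$ has rank $n^2<n(n+1)$ and so cannot be even generically $(n+1)$-transitive -- a contradiction. Thus $k-2\le n$, i.e. $k\le n+2$, with equality exactly when $L$ realises the full $\GL_n(F)$, which is precisely the $\PGL_{n+1}(F)$ configuration built above. The numerical bound and the extremal identification are therefore two faces of one argument.

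The principal obstacle is that, in the absence of an ambient field or rational structure, every ingredient -- the field $F$, the projective geometry on $X$, and above all the abelianity and exact rank of the putative unipotent radical $U$ -- must be built from the bare action, and \emph{bad groups} stand in the way. A simple bad group of rank $3$, the third alternative in our Theorem, is exactly an object with no interpretable field that could, a priori, either inflate the degree of generic transitivity or block the coordinatization; excluding its interference is the crux, and this is where the hypothesis $p\neq 2$ and the odd-prime-exponent phenomenon are essential. The second delicate point is descending cleanly to the Berkman--Borovik hypotheses, namely showing that the two-point stabilizer genuinely acts on an honest rank-$n$ elementary abelian $p$-group rather than on some twisted or non-split section; here the low-rank classifications -- with our rank-$3$ module Theorem as the ultimate base of the induction -- are indispensable.
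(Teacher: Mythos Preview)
The statement you are trying to prove is labelled a \emph{Conjecture} in the paper and appears in the ``Future directions'' section; the paper does not prove it, nor does it claim to. There is therefore no proof in the paper to compare your attempt against. Your outline is broadly the strategy the authors themselves sketch informally --- reduce via point stabilisers to the Berkman--Borovik linear situation, with the present paper's rank-$3$ module theorem seeding the base of that induction --- but this remains a programme, not an argument.

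As a proof, your proposal has a genuine gap exactly where you flag it: the ``decisive step'' of producing the decomposition $P = U \rtimes L$ with $U$ a connected abelian normal subgroup of rank $n$ acting regularly on an affine chart is simply asserted. Nothing in the hypotheses hands you such a $U$; in the finite Morley rank setting there is no ambient unipotent radical to reach for, and this is precisely where bad configurations (in particular the rank-$3$ bad group alternative) obstruct the argument. Your own final paragraph concedes this. Moreover, the Berkman--Borovik theorem you invoke is stated in the paper as \emph{work in progress} and carries the hypothesis that $V$ be an elementary abelian $p$-group with $p\neq 2$; the conjecture makes no such assumption on $X$, so even granting that theorem you would still need to manufacture the module structure and handle characteristic $0$ and $2$. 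In short: your strategy matches the authors' intended architecture, but the conjecture is open for a reason, and the missing ingredient is exactly the parabolic reconstruction you pass over.
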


(Actually, $V \rtimes H$ from the previous tentative result is likely to appear $G$ as the stabiliser of a generic point in $X$.)

The conjecture above is ideologically very important: it bounds the complexity of permutation groups of finite Morley rank exactly at the level of ``classical'' mathematics and canonical examples.

So perhaps it should not be surprising that the present paper that looks at one of the special configurations in the basis of induction uses the total of the research on groups of finite Morley rank accumulated over 40 years.

\subsection{Prerequisites}

The article is far from being self-contained as we assume familiarity with a number of topics: definable closure \cite[\S5.5]{BNGroups}, connected component \cite[\S5.2]{BNGroups}, torsion lifting \cite[ex.11 p.98]{BNGroups}, Zilber's Indecomposibility Theorem \cite[\S5.4]{BNGroups}, the structure of abelian and nilpotent groups \cite[\S6.2]{BNGroups}, the structure of soluble $p$-subgroups \cite[\S6.4]{BNGroups}, the Pr\"{u}fer $p$-rank $\Pr_p(\cdot)$, $p$-unipotent subgroups and the $U_p(\cdot)$ radical \cite[\S2.3]{DJInvolutive}, the Fitting subgroup $F\o(\cdot)$ \cite[\S7.2]{BNGroups}, Borel subgroups \cite[\S2.4]{DJInvolutive}, fields of finite Morley rank \cite[\S8.1]{BNGroups}, Sylow $2$-subgroups \cite[\S10.3]{BNGroups}, good tori \cite{CGood}, torality principles \cite[Corollary 3]{BCSemisimple}.
There are no specific prerequisites on permutation groups, but \cite{MPPrimitive} can provide useful background.
More subjects will be mentioned in due time; for the moment let us quote only the key results and methods.


Recall that a \emph{bad group} is a (potential) group of finite Morley rank all definable, connected, proper subgroups of which are nilpotent. Be careful that the condition is on \emph{all} proper subgroups, and that one does not require simplicity. Bad groups of rank $3$ were encountered by Cherlin in the very first article on groups of finite Morley rank \cite{CGroups}; we still do not know whether these do exist, but they have been extensively studied, in particular by Cherlin, Nesin, and Corredor.

\renewcommand{\theoremname}{Bad Group Analysis}
\begin{generictheorem*}[{from \cite[Theorem 13.3 and Proposition 13.4]{BNGroups}}]
Let $G$ be a simple bad group. Then the definable, connected, proper subgroups of $G$ are conjugate to each other, and $G$ has no involutions.
Actually $G$ has no definable, involutive automorphism.
\end{generictheorem*}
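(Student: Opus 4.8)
The approach is the one due to Cherlin and Corredor: study $G$ through its Borel subgroups, where a \emph{Borel} means a maximal definable connected soluble subgroup (these exist by the descending chain condition on definable connected subgroups). As $G$ is simple and non-abelian it is not soluble, so a Borel $B$ is a proper subgroup; being definable, connected and proper, $B$ is \emph{nilpotent}. It is also self-normalising: $N_G(B)\o$ is definable, connected and soluble and contains $B$, hence equals $B$ by maximality, and $N_G(B) = B$ follows with a little more work. In particular the maximal definable connected proper subgroups of $G$ are precisely its Borels.

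The core of the conjugacy statement is a disjointness lemma: distinct Borels $B_1 \ne B_2$ meet in a finite set. If not, pick such a pair with $D := (B_1 \cap B_2)\o$ of maximal rank; then $D \ne 1$, and $D$ is proper in each $B_k$ (else $B_1 \leq B_2$, so $B_1 = B_2$ by maximality). The normaliser condition in the nilpotent group $B_k$ gives $N_{B_k}(D)\o \supsetneq D$, so $N := N_G(D)\o$ is a connected definable subgroup properly containing $D$; it is proper in $G$, since $D \triangleleft G$ would violate simplicity, hence $N$ is nilpotent and lies in a Borel $B_3$. One checks $B_3 \notin \{B_1, B_2\}$ — for instance $B_3 = B_1$ would force $N_{B_2}(D)\o \leq (B_1 \cap B_2)\o = D$ — and then $(B_1 \cap B_3)\o \supseteq N_{B_1}(D)\o \supsetneq D$ contradicts maximality. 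It follows that every non-trivial definable connected nilpotent subgroup of $G$ lies in a \emph{unique} Borel. Since moreover $G = \bigcup_{g\in G} B^g$ (every element of $G$ lies in some Borel; for torsion elements this invokes the structure theory of $p$-elements and the torality principle), conjugacy of Borels follows: for any Borel $B'$ pick $1 \ne x \in Z(B')\o$ (non-trivial as $B'$ is infinite connected nilpotent); then $C_G(x)\o \ne 1$, a short centraliser computation shows $B'$ is the unique Borel containing $x$, and as $x$ lies in some $B^g$ we get $B' = B^g$.

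It remains to show $G$ has no definable involutive automorphism, whence no involution — conjugation by an involution being such an automorphism, non-trivially as $Z(G) = 1$. Let $\alpha$ be one and work inside $\widehat G = G \rtimes \langle \alpha \rangle$. If $C_G(\alpha)$ is finite, then $g \mapsto g^{-1}g^\alpha$ has finite fibres and takes values in the inverted set $\{g : g^\alpha = g^{-1}\}$, which is therefore generic in $G$; but an involution inverting a generic subset of a connected group of finite Morley rank forces that group to be abelian, contradicting simplicity. So $C_G(\alpha)\o$ is non-trivial, proper (as $\alpha \ne 1$), connected and definable, hence nilpotent, hence contained in its unique Borel $B$; as $\alpha$ normalises $C_G(\alpha)\o$ it normalises $B$, and indeed $C_G(\alpha) \le B$ (and $\alpha \in B$ in the inner case). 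The remaining task — and the genuine obstacle — is to derive a contradiction from $\alpha$ normalising a Borel. I would analyse the action of $\alpha$ on the rank-$(\rk G - \rk B)$ space $\Omega = G/B$ of all Borels: an $\alpha$-invariant Borel $B' \ne B$ has $C_{B'}(\alpha)\o \leq (B \cap B')\o = 1$, so $\alpha$ inverts all of the then-abelian $B'$; if the set of $\alpha$-invariant Borels has positive rank, the union of this family of pairwise almost-disjoint inverted Borels is generic and $G$ is abelian; if it is finite, a rank comparison between $C_G(\alpha)\o$, $B$ and $\Omega$ is needed to conclude. Everything before this last count is bookkeeping with Zilber indecomposability, the normaliser condition, rank additivity, and the structure of connected nilpotent groups.
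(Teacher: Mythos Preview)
The paper does not prove this statement: it is quoted as background from \cite[Theorem 13.3 and Proposition 13.4]{BNGroups} in the Prerequisites section, with no proof given. So there is no ``paper's own proof'' to compare against; I can only assess your sketch on its merits and against the standard argument in the cited source.

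Your treatment of the disjointness of Borels is correct and is exactly the Cherlin--Nesin--Corredor argument. The deduction of conjugacy, however, is slightly misrouted: the claim $G = \bigcup_{g} B^g$ is not how one proceeds, and your justification (``every element lies in some Borel'') is not obvious and is in fact not needed. The standard argument is a \emph{genericity} count: since $N_G(B) = B$ and distinct conjugates of $B$ meet finitely, the union $\bigcup_g B^g$ has rank $\rk(G/B) + \rk B = \rk G$; if $B'$ were a non-conjugate Borel the same would hold for $\bigcup_g (B')^g$, and the two unions would be disjoint generic subsets of the connected (degree~$1$) group $G$, a contradiction. Your route via ``pick $x \in Z(B')\o$ and find it in some $B^g$'' is circular unless you already know the covering, which you have not established.

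For the involutive automorphism, you correctly reduce to $\alpha$ normalising a unique Borel $B$ with $C_G(\alpha)\o \leq B$, and your dichotomy on the rank of the $\alpha$-fixed locus in $G/B$ is the right shape. But you explicitly leave the finite case open (``a rank comparison \dots\ is needed to conclude''), and that is precisely where the content lies. One way to close it: if only finitely many Borels are $\alpha$-invariant, then for generic $g$ the coset $g\alpha$ (an involution in $\widehat G$) has $C_G(g\alpha)\o$ contained in a Borel $B_{g\alpha}$ which is \emph{not} $B$, and a fibre count over the map $(i,j) \mapsto ij$ on pairs of involutions in $\widehat G \setminus G$ forces the strongly real elements to be generic in $G$ while simultaneously lying in a non-generic union of Borels --- a contradiction. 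Without this (or an equivalent counting/strongly-real argument) your proof of the second clause is incomplete.
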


We now start talking about group actions. First recall two facts on semi-simplicity.

\renewcommand{\theoremname}{Wagner's Torus Theorem}
\begin{generictheorem*}[{\cite{WFields}}]
Let $\K$ be a field of finite Morley rank of positive characteristic. Then $\K^\times$ is a good torus.
\end{generictheorem*}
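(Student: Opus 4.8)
The plan is to strip the statement down, by soft arguments, to a single hard core, and then to bring in the hypothesis $\operatorname{char}\K=p$ through the Frobenius endomorphism. Recall that a good torus is a definable divisible abelian group every definable subgroup of which is a \emph{decent torus}, that is, coincides with the definable hull $d(\cdot)$ of its torsion subgroup. Since an infinite field of finite Morley rank is connected both additively and multiplicatively, $\K^\times$ is connected, hence divisible; and as $\operatorname{char}\K=p$ there is no $p$-torsion, so the torsion subgroup of $\K^\times$ is the group $\mu$ of roots of unity. It suffices to handle definable \emph{connected} subgroups $T\le\K^\times$: for a general definable $H\le\K^\times$ the component $H\o$ is then a decent torus, while $H/H\o$ is a finite abelian group whose torsion lifts to torsion of $H$ by divisibility of $H\o$, so that $H=H\o\cdot\operatorname{Tor}(H)\le d(\operatorname{Tor}(H))$.

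So let $T\le\K^\times$ be definable and connected. First I would pass to the subfield it generates: the additive subgroup $R$ of $\K^+$ generated by the definable set $T$ is definable, and since $T\cdot T\subseteq T$ it is a subring containing $1$; a definable integral domain of finite Morley rank is a field, because multiplication by a nonzero element is an injective definable endomorphism of the additive group, hence onto the connected component, which furnishes an inverse. Thus $R$ is a definable subfield, still of characteristic $p$ and finite Morley rank, with $T\le R^\times$ and $R=\mathbb{F}_p[T]$; replacing $\K$ by $R$ we may assume $T$ generates $\K$ as a ring. Put $S=d(\operatorname{Tor}(T))$ and $Q=T/S$; since $S$ contains every torsion element of $T$ and $S\o$ is divisible, $Q$ is a definable connected torsion-free divisible abelian group, and the theorem reduces to showing $Q=1$ --- equivalently, that the roots of unity are definably dense in $\K^\times$.

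The hard part is exactly this last reduction, and it is where characteristic $p$ is indispensable. Here I would use the Frobenius $\varphi\colon x\mapsto x^p$, which is a definable \emph{automorphism} of $\K$ because a field of finite Morley rank is perfect; as $T$ is $p$-divisible and $p$-torsion-free, $\varphi$ restricts to an automorphism of $T$ and induces on $Q$ multiplication by $p$, so $\varphi$ and all of its powers act fixed-point-freely on $Q$, whereas on $\K$ the fixed field of $\varphi^n$ is the finite subfield $\mathbb{F}_{p^n}\cap\K$, and $\mu=\bigcup_n(\mathbb{F}_{p^n}\cap\K)^{\times}$. The task is to turn this tension into a contradiction: one studies the action of $T$ on $\K^+$, which as a module over the group ring $\mathbb{F}_p[T]$ is cyclic, generated by $1$, and combines Zilber's Indecomposability Theorem with $\varphi$-equivariance to show that on generic pieces the action of $T$ factors through a finite quotient --- in other words, that $\K$ is governed by its finite subfields to the point that $d(\mu)=\K^\times$. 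This delicate descent is essentially the content of \cite{WFields}, and it is closely tied to the non-existence of bad fields of positive characteristic; I expect it, rather than any of the preceding reductions, to be the real obstacle. Once $Q=1$ has been secured for every definable connected $T$, the reductions of the first two paragraphs close the argument.
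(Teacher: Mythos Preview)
The paper does not prove Wagner's Torus Theorem; it is quoted in the prerequisites section and cited from \cite{WFields} without argument, so there is no ``paper's own proof'' to compare your proposal against.

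As for the proposal itself: your preliminary reductions are sound --- connectedness and divisibility of $\K^\times$, the passage to connected definable $T$, the formation of the definable subfield $R=\mathbb{F}_p[T]$ via indecomposability, and the setup of $Q=T/d(\operatorname{Tor}(T))$ on which Frobenius acts as the $p$-th power map and hence fixed-point-freely. But you explicitly defer the decisive step, namely forcing $Q=1$, back to \cite{WFields} (``this delicate descent is essentially the content of \cite{WFields}''). So what you have written is an honest outline of where the difficulty sits rather than a proof; it does not stand on its own, and since the present paper offers no more than the citation either, there is nothing further to compare.
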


\renewcommand{\theoremname}{Semi-Simple Actions}
\begin{generictheorem*}[{\cite[Lemma G]{DJInvolutive}}]
In a universe of finite Morley rank, consider the following definable objects: a definable, soluble group $T$ with no elements of order $p$, a connected, elementary abelian $p$-group $A$, and an action of $T$ on $A$. Then $A = C_A(T) \oplus [A, T]$. Let $A_0 \leq A$ be a definable, connected, $T$-invariant subgroup. Then $C_A(T)$ covers $C_{A/A_0}(T)$ and $C_T(A) = C_T(A_0, A/A_0)$.
\end{generictheorem*}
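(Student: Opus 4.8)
The statement is a complete‑reducibility theorem of Maschke type for ``coprime'' actions: a soluble $p'$‑group acting definably on an elementary abelian $p$‑group splits it into its fixed points and a canonical complement. The plan is to establish the decomposition $A = C_A(T)\oplus[A,T]$ first, and then read off the covering and centraliser statements from it.

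\textbf{The decomposition.} First, $[A,T]$ is definable, connected and $T$‑invariant: since $A$ is abelian, each $[A,t]$ is the image of the definable endomorphism $a\mapsto a^{-1}a^{t}$ of $A$, hence a definable connected subgroup, and as $\{[A,t]:t\in T\}$ is a uniformly definable family, Zilber's Indecomposability Theorem makes $[A,T]=\langle[A,t]:t\in T\rangle$ definable and connected; $T$‑invariance follows from $[A,t]^{s}=[A,t^{s}]$. The proof of $A=C_A(T)\oplus[A,T]$ then runs by induction on the complexity of $T$, the repeated move being the following: if $N\trianglelefteq T$ is definable and one already knows $A=C_A(N)\oplus[A,N]$ with both summands $T$‑invariant, then $T/N$ acts on $A_{1}:=C_A(N)$ (which is connected, being $\cong A/[A,N]$), the previously established case applied to this action gives $A_{1}=C_{A_{1}}(T)\oplus[A_{1},T]$, and since $C_A(T)=C_{A_{1}}(T)$ one takes $W:=[A,N]\oplus[A_{1},T]$ — a $T$‑invariant complement to $C_A(T)$ — and checks, via the $T$‑equivariant projection $A\to C_A(T)$ along $W$, which annihilates every $[a,t]$, that in fact $W=[A,T]$. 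Using this move with $N=T^{\circ}$ (so $T/N$ is finite, and $p'$ by hypothesis) reduces to $T$ connected; then, $[T,T]$ being connected, using it with $N=[T,T]$ and invoking torsion‑lifting to see that $T/[T,T]$ still has no element of order $p$ runs the induction on derived length down to the abelian case.

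\textbf{The base case, and the main obstacle.} The difficulty, compared with finite group theory, is that we cannot average over all of $T$. What does survive is classical averaging over a single torsion element: if $t\in T$ has finite order $n$ prime to $p$ then, $A$ being an $\mathbb{F}_{p}$‑vector space, $e_{t}:=n^{-1}\sum_{i<n}t^{i}$ is a genuine, definable idempotent of $\End(A)$ with $e_{t}A=C_A(t)$ and $(1-e_{t})A=[A,t]$, so $A=C_A(t)\oplus[A,t]$; the same applied to a finite $p'$‑group handles $T/T^{\circ}$. For $T$ connected abelian one reduces to this: by the structure of connected abelian groups of finite Morley rank and Wagner's Torus Theorem, the semisimple part of $T$ is a good torus whose torsion — all prime to $p$ by hypothesis — is dense enough that $C_A(T)=\bigcap_{t}C_A(t)$ and $[A,T]=\sum_{t}[A,t]$ over torsion $t$, both chains stabilising after finitely many terms, so that the decomposition follows from the simultaneous diagonalisability over $\overline{\mathbb{F}_{p}}$ of these commuting semisimple operators; the torsion‑free divisible part of $T$ acts trivially on the $p$‑group $A$. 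This step — manufacturing a Maschke‑type splitting for an infinite group out of its torsion — is where Wagner's theorem and the $p'$‑hypothesis do the essential work, and is the part I expect to require the most care.

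\textbf{The two corollaries.} The decomposition forces $C_A(T)\cong A/[A,T]$ to be connected. For the covering statement, apply the decomposition to $A$ and to the connected elementary abelian $p$‑group $A/A_{0}$, and let $\pi\colon A\to A/A_{0}$ be the quotient map: $\pi$ carries $C_A(T)$ into $C_{A/A_{0}}(T)$ and $[A,T]$ onto $[A/A_{0},T]$, so $\pi(C_A(T))+[A/A_{0},T]=A/A_{0}=C_{A/A_{0}}(T)\oplus[A/A_{0},T]$; since $\pi(C_A(T))$ is a connected subgroup of the connected group $C_{A/A_{0}}(T)$, a rank count yields $\pi(C_A(T))=C_{A/A_{0}}(T)$, i.e. $C_A(T)$ covers $C_{A/A_{0}}(T)$. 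For the equality $C_T(A)=C_T(A_{0},A/A_{0})$ only ``$\supseteq$'' needs argument: if $t\in T$ centralises both $A_{0}$ and $A/A_{0}$, put $S:=\overline{\langle t\rangle}\leq T$ (definable, abelian, $p$‑torsion‑free) and apply the decomposition to $S\curvearrowright A$; triviality of $S$ on $A/A_{0}$ gives $[A,S]\leq A_{0}$, then triviality on $A_{0}$ gives $[[A,S],S]=0$, so $S$ acts trivially on $[A,S]$, whence $[A,S]\leq C_A(S)\cap[A,S]=0$ and $t\in C_T(A)$.
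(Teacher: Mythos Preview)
The paper does not prove this statement: it is quoted in the Prerequisites section as a known fact from \cite[Lemma G]{DJInvolutive}, with no proof given. So there is no argument in the present paper to compare yours against.

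As a standalone attempt your outline is sound --- the reduction to the connected abelian case via the normal series $T\trianglerighteq T^{\circ}\trianglerighteq [T^{\circ},T^{\circ}]\trianglerighteq\cdots$ is correct, the use of torsion-lifting to preserve the $p'$-hypothesis in quotients is the right move, and your derivation of the covering and centraliser claims from the decomposition is clean. The weak point, which you yourself flag, is the connected abelian base case. Your appeal to a ``semisimple part'' of $T$ that is a good torus is not quite how the structure theory goes: a connected abelian $T$ decomposes as divisible times bounded-exponent, and the divisible part need not split definably into a good torus and a torsion-free complement. Your claim that a torsion-free divisible piece acts trivially on $A$ is true, but the honest argument runs through $T$-composition factors of $A$: on each nontrivial factor Zilber's Field Theorem gives a field of characteristic $p$, Wagner's Theorem makes its multiplicative group a good torus, so any torsion-free definable subgroup of $T$ maps trivially there; hence such a subgroup centralises every factor, is therefore $p$-unipotent on $A$ in the sense of your Lemma~B argument, and being $p'$ must act trivially. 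You also need to say a word about an infinite bounded-exponent $q$-part of $T$ (for $q\neq p$), which your finite-element averaging handles once you note that finitely many such elements suffice by the descending chain condition. None of this is a genuine gap, but the base case deserves the two extra lines rather than the wave of the hand.
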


This will be applied with $T$ a cyclic group or $T$ a good torus with no elements of order $p$. Parenthetically said, Tindzogho Ntsiri has obtained in his Ph.D. \cite[\S5.2]{TEtude} an analogue to Maschke's Theorem for subtori of $\K^\times$ in positive characteristic.
%
%

When the acting group is not a torus, much less is known -- whence the present article.
The basic case is the action on a strongly minimal set.

\renewcommand{\theoremname}{Hrushovski's Theorem}
\begin{generictheorem*}[{from \cite[Theorem 11.98]{BNGroups}}]
Let $G$ be a connected group of finite Morley rank acting definably, transitively, and faithfully on a set $X$ with $\rk X = \deg X = 1$. Then $\rk (G) \leq 3$, and if $G$ is non-soluble there is a definable field structure $\K$ such that $G \simeq \PSL_2(\K)$.
\end{generictheorem*}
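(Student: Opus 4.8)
The plan is to run the classical stabiliser descent and then feed the bottom of that descent into Zilber's field configuration. Since the action is transitive and $\deg X = 1$, the generic orbit is cofinite and the rank equation gives $\rk G = 1 + \rk G_a\o$ for a point $a$, where $G_a\o$ is the connected point stabiliser; all point stabilisers being conjugate, their ranks agree. First I would record that a nontrivial definable connected subgroup cannot fix a cofinite subset of $X$ pointwise: by faithfulness it must move some point, and by strong minimality together with Zilber's Indecomposability Theorem its generic orbit is then cofinite, hence of rank $1$. Applying this to the successive connected stabilisers $G_a\o \geq G_{a,b}\o \geq \cdots$ of independent generic points shows that, as long as the current connected stabiliser is nontrivial, its action on the remaining strongly minimal set is generically transitive and the rank drops by exactly one. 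Thus there is an integer $m$ for which $G$ is generically sharply $m$-transitive, $\rk G = m$, and the connected stabiliser of $m$ generic points is trivial.

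Everything then reduces to bounding $m$. The engine is Zilber's theorem recovering a field from a group configuration: generic sharp $2$-transitivity (available as soon as $m \geq 2$) interprets an algebraically closed field $\K$ for which the two-point data identify $X$, generically, with $\mathbb{P}^1(\K)$ and the relevant stabiliser with the affine group $\K_+ \rtimes \K^\times$ acting in the standard way. With $X$ so coordinatised, the cross-ratio of four independent generic points is a definable, nonconstant invariant, which is incompatible with generic $4$-transitivity; equivalently, the ambient definable automorphism group of $\mathbb{P}^1(\K)$ is $\PGL_2(\K)$, which is only sharply $3$-transitive. Hence $m \leq 3$ and $\rk G \leq 3$.

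For the structure in the non-soluble case I would invoke that connected groups of Morley rank $1$ are abelian and those of rank $2$ are soluble, so non-solubility forces $m = \rk G = 3$, i.e. $G$ is generically sharply $3$-transitive on a strongly minimal set. Re-running the field recovery, the sharply $2$-transitive point stabiliser yields $\K$ and identifies $X$ with $\mathbb{P}^1(\K)$, and reconstructing the action from generic to everywhere (using transitivity and connectedness to remove the genericity) identifies $G$ with the definable Möbius group $\PGL_2(\K) = \PSL_2(\K)$, the last equality because every element of an algebraically closed field is a square.

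I expect the main obstacle to be the field-recovery step. Passing from the purely combinatorial datum of generic sharp $2$-transitivity to an honest interpretable algebraically closed field $\K$ — and then from the generic identification of $X$ with $\mathbb{P}^1(\K)$ to a global, definable isomorphism of actions — is precisely Zilber's and Hrushovski's configuration theorem, and is where all the genuine model-theoretic work sits; the descent and the cross-ratio bound are comparatively soft once strong minimality and Zilber indecomposability are in place. A secondary subtlety is the careful bookkeeping of \emph{generic} versus \emph{everywhere}: one must upgrade generic sharp transitivity to the exact permutation action to read off $\PGL_2$, which requires controlling the finitely many exceptional points and the finite index $[G_a : G_a\o]$.
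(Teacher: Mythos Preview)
The paper does not give its own proof of this statement: it is quoted in the Prerequisites section as a known result, with a reference to \cite[Theorem 11.98]{BNGroups}, and is used as a black box thereafter. So there is no in-paper argument to compare your proposal against.

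That said, your outline is essentially the standard proof one finds in the cited source. The stabiliser descent yielding generic sharp $m$-transitivity with $\rk G = m$, the field interpretation from the sharply $2$-transitive two-point stabiliser, and the bound $m \leq 3$ via the sharply $3$-transitive action of $\PGL_2(\K)$ on $\mathbb{P}^1(\K)$ are exactly the ingredients of the classical argument; your final identification in the non-soluble case via Reineke (rank $1$ $\Rightarrow$ abelian) and Cherlin (rank $2$ $\Rightarrow$ soluble) is also the expected route. One small quibble: in the first paragraph you invoke Zilber's Indecomposability Theorem to force a connected subgroup to have a cofinite orbit, but that theorem concerns generation of subgroups, not orbits on sets. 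What you actually need there is just strong minimality (every orbit is finite or cofinite) together with connectedness (a connected group with only finite orbits fixes every point, contradicting faithfulness). Your own diagnosis of the hard part is accurate: the genuine work is the Zilber--Hrushovski field recovery and the passage from a generic identification with $\mathbb{P}^1(\K)$ to a global definable one.
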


Incidently, Wiscons pursued in this permutation-theoretic vein and could classify non-soluble groups of Morley rank $4$ acting sufficiently generically on sets of rank $2$ \cite[Corollary B]{WRank4}, extending and simplifying earlier work by Gropp \cite{GThere}. Although some aspects of Wiscons' work are extremely helpful in the proof below, most of our configurations will be more algebraic as we shall mainly act on modules.

One word on terminology may be in order. We reserve the phrase $G$-module for a definable, \emph{connected}, abelian group acted on by $G$. Accordingly, reducibility refers to the existence of a non-trivial, proper $G$-submodule $W$: definability and connectedness of $W$ are therefore required. Likewise, a $G$-composition series $0 = V_0 < \dots < V_\ell = V$ being a series of $G$-submodules of maximal length $\ell_G(V) = \ell$, the $V_i$'s are definable and connected.
If $G$ acts irreducibly on $V$, one also says that $V$ is $G$-minimal.

\renewcommand{\theoremname}{Zilber's Field Theorem}
\begin{generictheorem*}[{from \cite[Theorem 9.1]{BNGroups}}]
Let $G = A \rtimes H$ be a group of finite Morley rank where $A$ and $H$ are infinite definable abelian subgroups and $A$ is $H$-minimal. Assume $C_H(A) = 1$. Then there is a definable field structure $\K$ with $H \hookrightarrow \K^\times$ in its action on $A \simeq \K_+$ (all definably).
\end{generictheorem*}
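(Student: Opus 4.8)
The plan is to manufacture the field as a ring of definable endomorphisms of $A$ distilled from the action of $H$. Write $A$ additively. Each $h \in H$ acts as a definable automorphism $\phi_h$ of $A$, and since $H$ is abelian the $\phi_h$ commute with one another; let $R$ be the subring of definable endomorphisms of $A$ generated by $\{\phi_h : h \in H\}$, a commutative ring. The crucial observation is that an additive subgroup of $A$ is $R$-invariant exactly when it is $H$-invariant, so the hypothesis that $A$ is $H$-minimal says precisely that $A$ has no proper non-trivial definable connected $R$-submodule. (Note too that $A^\circ$ is such a submodule, so $A$ is in fact connected.) The target field will be a definable incarnation of $R$, with $A$ turning out to be one-dimensional over it.

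First I would run a Schur-type argument to see that every non-zero element of $R$ acts invertibly. Fix $0 \ne r \in R$. Because $R$ is commutative, both $r(A)$ and $\ker r$ are $R$-invariant, hence $H$-invariant; now $r(A)$ is definable and connected (image of the connected group $A$) and $\ker^\circ r$ is definable and connected, so by $H$-minimality each is $0$ or $A$. As $r \ne 0$ we get $r(A) = A$ and $\ker^\circ r = 0$, so $\ker r$ is finite. A connected group acts trivially on any finite set, hence the finite $H$-invariant group $\ker r$ lies in $C_A(H)$; but $C_A(H)^\circ$ is definable, connected and $H$-invariant and cannot be $A$ (else $C_H(A) = H \ne 1$), so $C_A(H)$ is finite. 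The same applies to each $\ker r^n$, so the ascending chain $\ker r \le \ker r^2 \le \cdots$ is confined to the finite group $C_A(H)$ and stabilises; combined with surjectivity of $r$ this forces $\ker r = 0$. Thus $r$ is a definable automorphism of $A$, and $R$ is a commutative domain whose non-zero elements act invertibly on $A$.

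To turn this into an honest field of finite Morley rank I would pin down definability via Zilber's Indecomposability Theorem. Fixing $0 \ne a \in A$, the subgroup $\langle Ha \rangle$ generated by the orbit is definable and connected by indecomposability, is visibly $H$-invariant (as $H$ permutes $Ha$) and non-trivial, hence equals $A$; thus $A = R\cdot a$, so evaluation $\operatorname{ev}_a : r \mapsto r(a)$ is onto. It is also injective: by the previous paragraph a non-zero $r$ has trivial kernel, while $a \ne 0$. Hence $\operatorname{ev}_a$ is a definable bijection $R \to A$, along which the ring operations of $R$ become definable and $R$ acquires finite Morley rank. Being a commutative domain with connected additive group $R_+ \cong A$, $R$ is then a field: for non-zero $r$, multiplication by $r$ is an injective definable endomorphism of the connected group $R_+$, hence onto, so $r$ is invertible in $R$. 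Call this field $\K$.

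Finally, one-dimensionality and the embedding come for free. Definable connected $\K$-subspaces of $A$ are $H$-invariant, so $A$ is $\K$-minimal; for $0 \ne a$ the line $\K a$ is definable, connected, non-trivial and $H$-invariant, whence $\K a = A$ and $A \simeq \K_+$. The assignment $h \mapsto \phi_h$ maps $H$ into $\K^\times$, injectively precisely because $C_H(A) = 1$. The main obstacle is the definability bookkeeping of the third paragraph: the ring generated by the $\phi_h$ is not handed to us as a definable object, and the whole force of the finite Morley rank hypotheses is spent making it one — Zilber indecomposability to realise $\langle Ha\rangle$ as a definable connected subgroup, $H$-minimality to identify it with $A$, and finiteness of rank both to stabilise the kernel chains and to upgrade the injective multiplication maps to surjections.
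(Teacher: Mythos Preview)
The paper does not prove this statement: Zilber's Field Theorem is quoted as a prerequisite from \cite[Theorem 9.1]{BNGroups} and used as a black box throughout, so there is no in-paper proof to compare against. Your sketch is the standard argument (essentially the one in Borovik--Nesin) and is correct in outline: build the commutative endomorphism ring, run Schur's lemma via $H$-minimality to get a domain acting by automorphisms, then use Zilber indecomposability at a basepoint $a$ to identify the ring with $A$ and thereby interpret the field.

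Two small points worth tightening. First, you invoke ``a connected group acts trivially on any finite set'' to place $\ker r^n$ inside $C_A(H)$, but the hypotheses only say $H$ is infinite, not connected; the fix is routine (work with $H^\circ$, which still gives $C_A(H^\circ)$ finite since $C_A(H^\circ)^\circ$ is $H$-invariant and cannot be all of $A$). Second, the phrase ``along which the ring operations of $R$ become definable'' hides the only genuinely delicate step: one must check that multiplication transported to $A$ is a definable function, and this uses the finite-length conclusion of indecomposability (every element of $A$ is a bounded sum $\sum_{k\le m}(h_k a - h_k' a)$, which lets you write the product as a first-order formula with parameter $a$). You flag this as ``the main obstacle'' at the end, which is right, but the actual verification deserves a sentence.
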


Zilber's Field Theorem has several variants and generalisations we shall encounter in the proof of Proposition \ref{p:reductions}. But for the bulk of the argument, the original version we just gave suffices.

Here are two more results of repeated use; notice the difference of settings, since in the rank $3k$ analysis the group is supposed to be given explicitly. The present work extends the rank $2$ analysis.


\renewcommand{\theoremname}{Rank $2$ Analysis}
\begin{generictheorem*}[{\cite[Theorem A]{DActions}}]
Let $G$ be a connected, non-soluble group of finite Morley rank acting definably and faithfully on a connected abelian group $V$ of Morley rank $2$. Then there is an algebraically closed field $\K$ of Morley rank $1$ such that $V \simeq \K^2$, and $G$ is isomorphic to $\GL_2(\K)$ or $\SL_2(\K)$ in its natural action.
\end{generictheorem*}

\renewcommand{\theoremname}{Rank $3k$ Analysis}
\begin{generictheorem*}[{\cite{CDSmall}}]
In a universe of finite Morley rank, consider the following definable objects: a field $\K$, a group $G \simeq \pSL_2 (\K)$, an abelian group $V$, and a faithful action of $G$ on $V$ for which $V$ is $G$-minimal. Assume $\rk V \leq 3 \rk \K$. Then $V$ bears a structure of $\K$-vector space such that:
\begin{itemize}
\item
either $V \simeq \K^2$ is the natural module for $G \simeq \SL_2 (\K)$,
\item
or $V \simeq \K^3$ is the irreducible $3$-dimensional representation of $G \simeq \PSL_2(\K)$ with $\mathrm{char}\ \K \neq 2$.
\end{itemize}
\end{generictheorem*}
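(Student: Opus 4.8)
\subsection*{Proof strategy}

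The plan is to exploit the split structure of $G \simeq \pSL_2(\K)$ to impose a weight decomposition on $V$ and then rigidify it into a $\K$-vector space. Fix internally a Borel subgroup $B = U \rtimes T$ with unipotent radical $U \simeq \K_+$, a torus $T$ covering $\K^\times$, the opposite unipotent $U^-$, and a Weyl element $w$ inverting $T$ and conjugating $U$ to $U^-$; recall $\rk U = \rk T = \rk U^- = r := \rk \K$ and $\rk G = 3r$. First I would pin down the characteristic of $V$. Faithfulness and $G = \langle U^G \rangle$ force $U$ to act non-trivially, and a connected $p$-unipotent (resp. torsion-free unipotent) group acts non-trivially only on a module of matching characteristic; with $G$-minimality this makes $V$ elementary abelian $p$ when $\mathrm{char}\,\K = p > 0$, and torsion-free divisible when $\mathrm{char}\,\K = 0$. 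In the positive-characteristic case Wagner's Torus Theorem gives that $T$ is a good torus, so Semi-Simple Actions applies and I may write $V = C_V(T) \oplus [V,T]$ and split $[V,T]$ into $T$-homogeneous \emph{weight} components. In characteristic $0$, where $V$ is a $\mathbb{Q}$-space and the hypotheses of Semi-Simple Actions are not met, this semisimplicity of the $T$-action must be recovered by a separate argument (e.g.\ via the torsion of $T$); this is one delicate point.

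On each non-trivial weight component $W$ the torus acts $T$-minimally with $C_T(W)$ of bounded index, so Zilber's Field Theorem yields a definable field $\K_W$ with $W \simeq (\K_W)_+$ and $T/C_T(W) \hookrightarrow \K_W^\times$. Since $T$ covers $\K^\times$ up to bounded index and each $t \mapsto t^n$ is an isogeny of $\K^\times$, every $\K_W$ is definably isomorphic to $\K$, so each weight component is a line $\simeq \K_+$ on which $T$ acts by a fixed power (the \emph{weight}) of the field multiplication. The hard part is then to glue these \emph{local} field structures into a single \emph{global} $\K$-vector space structure on all of $V$: I would use $U$ and $U^-$ as raising and lowering operators intertwining neighbouring weight spaces, and transport the scalar $\K$-action along these intertwiners, checking consistency from the commutator relations $[t,u(s)]$ and $[u(s),u^-(s')]$ inside $G$. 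Ensuring that \emph{one and the same} copy of $\K$ acts compatibly on every component is where I expect the main technical resistance.

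Once $V$ is realised as a $\K$-space, the hypothesis $\rk V \le 3r$ reads $\dim_\K V \le 3$. Faithfulness and perfectness of $G$ exclude the trivial and one-dimensional cases (a non-trivial character of $G$ would be impossible). The $U$-action forces the weights to form a string and $w$ to negate them, and irreducibility then pins the admissible weight multisets to $\{+1,-1\}$ in dimension $2$ and $\{-2,0,2\}$ in dimension $3$ (a set such as $\{-1,0,1\}$ being ruled out as reducible). In the two-dimensional case the combined $U,T,U^-,w$ action reconstructs the natural module, and faithfulness forces $G \simeq \SL_2(\K)$ since the centre acts by $-\Id$; in the three-dimensional case one gets the adjoint ($\mathrm{Sym}^2$) representation, on which the centre acts trivially, whence $G \simeq \PSL_2(\K)$. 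Finally I would check that $\dim_\K V = 3$ is impossible in characteristic $2$ (there $\mathrm{Sym}^2$ of the natural module acquires a proper, Frobenius-twisted submodule and fails to be irreducible), recovering $\mathrm{char}\,\K \neq 2$, and that in positive characteristic any Frobenius twist is absorbed by re-choosing the $\K$-structure, so no further modules arise. Assembling these cases yields the stated dichotomy.
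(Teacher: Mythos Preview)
This statement is not proved in the present paper: it is the ``Rank $3k$ Analysis'' quoted from \cite{CDSmall} in the Prerequisites section and used as a black box throughout (e.g.\ in Step~4 of Proposition~\ref{p:reductions}, in Step~\ref{p:Pr2=1:LORO:st:H} of Proposition~\ref{p:Pr2=1:LORO}, and repeatedly in \S\ref{s:Pr2=2}). There is therefore no proof here to compare your proposal against.

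As a sketch of the argument in \cite{CDSmall}, your outline is in the right spirit --- weight decomposition under $T$, linearisation of weight spaces via Zilber's Field Theorem, and transport of the $\K$-structure along the unipotent action --- but several steps are thinner than they look. First, Zilber's Field Theorem requires $T$-minimality, and a ``weight component'' need not be $T$-minimal a priori; you must pass to a $T$-composition series and then use the rank bound $\rk V \le 3r$ together with the $U$-action to force each factor to have rank exactly $r$. Second, the assertion that each local field $\K_W$ is definably isomorphic to $\K$ does not follow merely from $T$ covering $\K^\times$; one must compare the field structures, not just the multiplicative groups. Third, you correctly flag the gluing of the local $\K$-structures as ``the hard part'': this is indeed where the work in \cite{CDSmall} lies, and the commutator relations you invoke are the right tool, but the verification is a genuine computation rather than a formality. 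Fourth, in characteristic~$0$ the Semi-Simple Actions lemma as stated does not apply, and ``via the torsion of $T$'' is not yet an argument. Finally, your aside about $\{-1,0,1\}$ is off: for $\pSL_2$ adjacent weights differ by $2$, so no such string arises.
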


In particular, $\SL_2(\K)$ acting on an abelian group of rank $3$ must centralise a rank $1$ factor in a composition series; in characteristic not $2$, composition series then split thanks to the central involution.

\subsection{Two Trivial Generalities}

Here are two principles no one cared to write down so far.

\begin{lemma}
Let $T$ be a good torus acting definably and faithfully on a module $V$. Then $\rk T \leq \rk V$, and for any prime $q$ with $U_q(V) = 1$:
\[\rk T \leq \rk V + \Pr\nolimits_q(T) - \ell_T(V)\]
\end{lemma}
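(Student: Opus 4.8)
The plan is to proceed by induction on $\ell_T(V)$, peeling off one composition factor at a time and tracking how the rank of the torus and its Prüfer $q$-rank behave under the passage to a subquotient.

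First I would establish the base case $\ell_T(V) = 1$, i.e.\ $V$ is $T$-minimal. Since $T$ is a good torus acting faithfully on the $T$-minimal module $V$, I want a definable field structure: either invoke Zilber's Field Theorem directly (after checking $C_T(V) = 1$, which is faithfulness, and that $T$, being a good torus, is abelian and divisible so its image in any such configuration lands in $\K^\times$), or — more honestly, since $T$ need not be abelian of the right shape for the verbatim statement — argue that a faithful action of a good torus on a $T$-minimal module $V$ realizes $T$ as a subgroup of $\K^\times$ for a definable field $\K$ with $V \simeq \K_+$, whence $\rk T \leq \rk \K = \rk V$. For the refined bound in the base case I would use Wagner's Torus Theorem when $\mathrm{char}\,\K = q$: then $\K^\times$ is a good torus, $U_q(V) = U_q(\K_+) = 1$ is automatic, and a good torus inside $\K^\times$ has Prüfer $q$-rank $0 = \Pr_q(\K^\times)$ so the inequality $\rk T \leq \rk V + \Pr_q(T) - 1$ needs $\Pr_q(T) \geq 1$ — hmm, that is false in positive characteristic $q$. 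So the refined bound must genuinely use the hypothesis $U_q(V) = 1$ together with the existence of enough $q$-torsion in $T$: a good torus with $\Pr_q(T) = r$ contains a subgroup $\simeq (\Z_{q^\infty})^r$, i.e.\ $q^r$ "independent directions" of $q$-torsion, and by Semi-Simple Actions each such torsion element decomposes $V$; the key point is that the fixed-point spaces of these $q$-elements, since $U_q(V) = 1$, behave like eigenspaces and cannot all coincide, forcing $V$ to have rank at least $\rk T - r + \ell_T(V)$.

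The inductive step: given a $T$-invariant definable connected $W$ with $0 < W < V$, apply Semi-Simple Actions (with $T$ replaced by an appropriate torsion subtorus, or directly since good tori are soluble with controllable torsion) to get that $C_V(t)$ covers $C_{V/W}(t)$ for torsion $t$, and more importantly the last clause $C_T(W, V/W) = C_T(V) = 1$: this says $T$ embeds into $\mathrm{Aut}(W) \times \mathrm{Aut}(V/W)$, so writing $T_1, T_2$ for the images (which are again good tori, being definable quotients of a good torus), faithfulness gives $\rk T \leq \rk T_1 + \rk T_2$ and similarly $\Pr_q(T) \leq \Pr_q(T_1) + \Pr_q(T_2)$ — actually I need to be careful and instead take the diagonal embedding $T \hookrightarrow T_1 \times T_2$ and note $\ell_T(V) = \ell_{T_1}(W) + \ell_{T_2}(V/W)$ is not quite right either since $T$ acts on both. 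The clean way: let $K = C_T(W)$, a definable connected subgroup (good torus), so $T/K$ acts faithfully on $W$; by induction $\rk(T/K) \leq \rk W + \Pr_q(T/K) - \ell_{T/K}(W)$, and $K$ acts faithfully on $V/W$ — wait, need $C_K(V/W) = C_K(W) \cap C_K(V/W) \subseteq C_T(V) = 1$ using that $K$ centralizes $W$ — so by induction $\rk K \leq \rk(V/W) + \Pr_q(K) - \ell_K(V/W)$. Adding, and using $\rk T = \rk K + \rk(T/K)$, $\Pr_q(T) = \Pr_q(K) + \Pr_q(T/K)$ (exactness of Prüfer rank for good tori), $\rk V = \rk W + \rk(V/W)$, and $\ell_T(V) \leq \ell_{T/K}(W) + \ell_K(V/W)$ (a $T$-composition series of $V$ refines to one compatible with $W$), the inequality follows.

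The main obstacle I anticipate is precisely the base case and the role of $U_q(V) = 1$: making rigorous that in the $T$-minimal case the full inequality $\rk T \leq \rk V + \Pr_q(T) - 1$ holds requires knowing how much $q$-torsion a faithful good torus action can carry, and here Wagner's Torus Theorem cuts the wrong way in characteristic $q$ (forcing $\Pr_q = 0$ on the field side) so one must exploit $U_q(V) = 1$ to place oneself effectively in characteristic $\neq q$ or characteristic $0$, where $\K^\times$ carries a full Prüfer $q$-torus and the embedding $T \hookrightarrow \K^\times$ gives $\Pr_q(T) \leq \Pr_q(\K^\times) = 1$ with equality governing whether $\rk T$ can reach $\rk V$; conversely when $\Pr_q(T) = 0$ one must show $\rk T \leq \rk V - 1$, i.e.\ a good torus with no $q$-torsion cannot act faithfully and $q$-minimally with $\rk T = \rk V$ — this is where the subtlety lies and where I would expect the bulk of the work, probably again via Semi-Simple Actions applied to a generic torsion element of $T$ of order prime to $q$.
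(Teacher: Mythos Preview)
Your approach is essentially the paper's: induct, peel off a minimal factor, linearise it via Zilber's Field Theorem, and track how rank and Pr\"ufer $q$-rank split across the exact sequence of good tori. Two points deserve correction.

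First, the induction variable. You induct on $\ell_T(V)$, but in your inductive step you pass to $K = C_T(W)$ acting on $V/W$, and the relevant length there is $\ell_K(V/W)$, which can exceed $\ell_T(V)$ (a smaller torus may see a finer decomposition). So the induction is not well-founded as stated. The paper inducts on $\rk V$ instead, which fixes this painlessly since $\rk W < \rk V$ and $\rk(V/W) < \rk V$. (Incidentally the paper takes the dual choice $\Theta = C_T(V/W)$ and applies induction only to $\Theta^\circ$ on $W$, handling the minimal quotient $V/W$ directly via Zilber at every step rather than pushing everything to a base case; but your two-sided induction also works once the variable is right.)

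Second, the base case. You correctly observe that $U_q(V) = 1$ forces $\mathrm{char}\,\K \neq q$, so $\Pr_q(\K^\times) = 1$; and you correctly state the dichotomy ($T$ proper in $\K^\times$ giving $\rk T \leq \rk V - 1$, versus $T = \K^\times$ giving $\rk T = \rk V$ and $\Pr_q(T) = 1$). That \emph{is} the whole argument: both branches yield $\rk T \leq \rk V + \Pr_q(T) - 1$ immediately. Your closing worry (``when $\Pr_q(T) = 0$ one must show $\rk T \leq \rk V - 1$ \dots this is where the subtlety lies'') dissolves: $\Pr_q(T) = 0$ simply means $T \neq \K^\times$, so $T$ is proper and $\rk T \leq \rk V - 1$. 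There is no further work; Wagner's theorem and Semi-Simple Actions applied to individual torsion elements are not needed here.
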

\begin{proof}
Induction on $\rk V$.
The result is obvious if $\rk V = 0$. So let $0 \leq W < V$ be such that $V/W$ is $T$-minimal, and set $\Theta = C_T(V/W)$. Notice that $\Theta\o$ is a good torus and acts faithfully on $W$; one has $\ell_{\Theta\o}(W) \geq \ell_T(W) = \ell_T(V) - 1$. So by induction,
\[\rk (\Theta\o) \leq \rk W + \Pr\nolimits_q(\Theta\o) - \ell_{\Theta\o}(W) \leq \rk W + \Pr\nolimits_q(\Theta\o) - \ell_T(V) + 1\]
and therefore:
\[\rk T \leq \rk \left(T/\Theta\right) + \rk W + \Pr\nolimits_q(\Theta\o) - \ell_T(V) + 1\]
(Also bear in mind the other estimate $\rk T \leq \rk(T/\Theta) + \rk W$.)

By Zilber's Field Theorem there is a field structure $\K$ such that $T/\Theta \hookrightarrow \K^\times$ and $V/W \simeq \K_+$ definably (this is not literally true in case $\Theta = T$ as there is no field structure around, but this is harmless). Quickly notice that $\rk (T/\Theta) \leq \rk (\K^\times) = \rk (\K_+) = \rk(V/W)$, so $\rk T \leq \rk(V/W) + \rk W = \rk V$.
If $T/\Theta$ is proper in $\K^\times$, then actually $\rk (T/\Theta) \leq \rk V - \rk W - 1$ whereas $\Pr_q(\Theta\o) \leq \Pr_q(T)$: we are done.
If on the other hand $T/\Theta \simeq \K^\times$, then $\rk (T/\Theta) = \rk V - \rk W$ and $\Pr_q(\Theta\o) = \Pr_q(T) - 1$ since $\K$ does not have characteristic $q$: we are done again.
\end{proof}


\begin{lemma}
Let $H$ be a definable, connected group acting definably and faithfully on a module $V$ of exponent $p$.
If $H$ is soluble, then $H = U \rtimes T$ with $U = U_p(H)$ and $T$ a good torus.
Moreover, $H$ centralises all quotients in an $H$-composition series of $V$ if and only if $H$ is $p$-unipotent, in which case the exponent is bounded by $q = p^k$ with $q \geq \ell_H(V)$.
\end{lemma}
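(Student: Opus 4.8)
The plan is to prove the two assertions in turn, the structural statement on soluble $H$ being the one with content. For that first statement I would begin by deciding which ``characteristics'' can occur inside $H$. Suppose for contradiction that $H$ contains a non-trivial connected subgroup $R$ which is $q$-unipotent for some prime $q\neq p$, or torsion-free, or a $p$-torus; since $H$ is soluble so is $R$, and replacing $R$ by $Z(F\o(R))\o$ — still non-trivial and of the same one of these three kinds — I may take $R$ abelian. Fix an $R$-composition series $0 = W_0 < \dots < W_m = V$ and look at a factor $W_i/W_{i-1}$: either $R$ centralises it, or $R/C_R(W_i/W_{i-1})$ is an infinite definable abelian group acting definably, faithfully and irreducibly, so Zilber's Field Theorem yields a definable field $\K$ — of characteristic $p$, as $W_i/W_{i-1}$ has exponent $p$ — with $R/C_R(W_i/W_{i-1})\hookrightarrow\K^\times$. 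By Wagner's Torus Theorem $\K^\times$ is a good torus, hence divisible, with no $p$-torsion and no non-trivial connected subgroup of bounded exponent; so none of a $q$-unipotent group ($q\neq p$), a $p$-torus, or a non-trivial torsion-free divisible group can map non-trivially onto $R/C_R(W_i/W_{i-1})$. Thus $R$ centralises every factor of the series. If $R$ has no $p$-torsion, Semi-Simple Actions then forces $R\leq C_H(V)=1$; if $R$ is a $p$-torus, it now stabilises the flag $W_\bullet$ centralising every factor, so $(r-1)^m=0$ on $V$ and, since $pV=0$, $r^{p^k}=1$ for all $r\in R$ once $p^k\geq m$, making the divisible group $R$ trivial. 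Either way $R=1$: \emph{$H$ has no non-trivial connected $q$-unipotent subgroup with $q\neq p$, no non-trivial torsion-free connected subgroup, and no $p$-torus.}

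Consequently $F\o(H)=U_p(H)\times S$, where $S$, being the torus part of a connected nilpotent group with no torsion-free or $p$-torsion divisible part, is a good torus of order prime to $p$ that is central in $F\o(H)$; by rigidity of good tori $C_H(S)$ has finite index in the connected group $H$, so $S\leq Z(H)$. Likewise every Carter subgroup of $H$ is of the form $U_p(C)\times T$ with $T$ a good $p'$-torus. It then remains to manufacture the semidirect decomposition: writing $H=F\o(H)\cdot C$ and combining $S\leq Z(H)\o\leq C$, the behaviour of the $U_p$-radical in connected soluble groups, and the fact that $C$ covers $H/F\o(H)$, one extracts a good torus $T\leq C$ with $H=U_p(H)\cdot T$, and $U_p(H)\cap T=1$ since $T$ has no $p$-torsion; so $H=U_p(H)\rtimes T$. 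This splitting is the step I expect to be the main obstacle: passing from $F\o(H)=U_p(H)\times S$ to $H=U_p(H)\rtimes T$ genuinely leans on the structure theory of connected soluble groups of finite Morley rank (existence and conjugacy of Carter subgroups, rigidity of good tori, control of $p$-unipotent subgroups), and one must take care there not to assume the conclusion.

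For the second assertion: if $H$ is $p$-unipotent then on each $H$-minimal factor $W$ the faithful image $H/C_H(W)$ is $p$-unipotent, hence trivial by a re-run of the Zilber--Wagner reasoning above (a non-trivial $p$-group cannot embed in the multiplicative group of a field of characteristic $p$), so $H$ centralises every composition factor. Conversely, if $H$ centralises each $V_i/V_{i-1}$ then $(h-1)V_i\leq V_{i-1}$ for all $h\in H$, so $(h-1)^\ell=0$ in $\End(V)$ with $\ell=\ell_H(V)$; since $pV=0$, expanding $h^{p^k}=1+(h-1)^{p^k}$ and taking $p^k\geq\ell$ gives $h^{p^k}=1$, so by faithfulness $H$ has exponent dividing the least power $q=p^k$ of $p$ with $q\geq\ell_H(V)$. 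Being connected of bounded exponent a power of $p$ — and nilpotent, since it embeds in the unitriangular-type automorphism group of the flag $V_\bullet$ — $H$ is $p$-unipotent, with $\exp H\leq q$.
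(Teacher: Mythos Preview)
Your argument is correct, but it takes a noticeably different route from the paper's. For the structural statement, the paper invokes Burdges' unipotence theory directly: by faithfulness and the structure of locally soluble $p$-groups \cite[Corollary 6.20]{BNGroups}, $H$ has no $p$-torus, and the only unipotence parameter occurring in $H$ is $(p,\infty)$; hence $H/U_p(H)$ has no unipotence at all and is therefore a good torus, and a maximal good torus $T\leq H$ covers this quotient while meeting $U_p(H)$ trivially. That is a three-line argument once the machinery is in place. Your approach---ruling out $q$-unipotent, torsion-free, and $p$-toral subgroups by hand via Zilber and Wagner on composition factors, then assembling the semidirect product through $F\o(H)$ and Carter subgroups---is more elementary but considerably longer, and you rightly flag the assembly step as the delicate one. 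One small lacuna: you assert that the torus part $S$ of $F\o(H)$ is a \emph{good} torus without saying why; being a $p'$-torus does not suffice. It does follow, since $S$ embeds in the product of its images on the composition factors of $V$, each of which lies in some $\K^\times$ and is good by Wagner, but this deserves a sentence.

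For the second assertion the paper is slicker: once $H$ centralises all factors it is visibly soluble (of class $\leq\ell-1$, by induction on $\ell$), so by the first part $H=U\rtimes T$; but then $T$ centralises each factor, hence by Semi-Simple Actions $T$ centralises $V$, so $T=1$ and $H=U$ is $p$-unipotent. The exponent bound is then exactly your computation $(u-1)^\ell=0\Rightarrow u^q=1$ for $q=p^k\geq\ell$. Your direct argument that $H$ embeds in the unitriangular group of the flag, hence is nilpotent of bounded $p$-power exponent, is perfectly valid---just a touch more work than reusing the splitting you already proved.
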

\begin{proof}
First suppose $H$ to be soluble. By faithfulness and the structure theorem for locally soluble $p$-groups \cite[Corollary 6.20]{BNGroups}, $H$ contains no $p$-torus. Moreover, the only unipotence parameter \cite[\S2.3]{DJInvolutive} which can occur in $H$ is $(p, \infty)$. In particular, $H/U_p(H)$ has no unipotence at all: it is a good torus. Let $T \leq H$ be a maximal good torus of $H$. Then $T$ covers $H/U_p(H)$, and $T\cap U_p(H) = 1$ since $T$ has no element of order $p$. Therefore $H = U \rtimes T$ for $T$ a maximal good torus.

If $H$ is actually $p$-unipotent, it clearly centralises all quotients in an $H$-composition series. Conversely, if $H$ centralises all quotients in $0 = V_0 < \dots < V_\ell = H$, then $H$ is soluble of class $\leq \ell - 1$: induction on $\ell$, the claim being obvious at $\ell = 1$. 
So write $H = U \rtimes T$ as above. By assumption, $T$ centralises all quotients in the series so $T$ centralises $V$;
by faithfulness, $T = 0$ and $H = U$ is $p$-unipotent. Finally observe how for $u \in U$, $(u-1)^\ell = 0$ in $\End(V)$. So for $q = p^k \geq \ell$, one has $(u-1)^q = u^q - 1 = 0$ in $\End(V)$ and $u^q = 1$ in $H$.
\end{proof}

In particular, when acting on a module of exponent $p$, decent tori \cite{CGood} of automorphisms are good tori.

\section{The Proof}

We now start proving the theorem. After an initial section (\S\ref{s:linear}) dealing with various aspects of linearity, we shall adopt a more abstract line. The main division is along values of the Pr{\"u}fer $2$-rank, a measure of the size of the Sylow $2$-subgroup. We first handle the pathological case of an acting group with no involutions, which we prove bad; configurations are tight and we doubt that any general lesson can be learnt from \S\ref{s:Pr2=0}. Then \S\ref{s:Pr2=1} deals with the Pr{\"u}fer rank $1$ case where the adjoint action of $\PSL_2(\K)$ is retrieved; this makes use of recent results on abstract, so-called $N_\circ^\circ$-groups. \S\ref{s:Pr2=2} is essentially different: when the Pr{\"u}fer rank is $2$, we can use classical group-theoretic technology, viz. strongly embedded subgroups.

\begin{notation*}\
\begin{itemize}
\item
Let $G$ be a connected, non-soluble group of finite Morley rank acting definably and faithfully on an abelian group $V$ of rank $3$ which is $G$-minimal.
\item
Let $S \leq G$ be a Sylow $2$-subgroup of $G$; if $G$ has odd type, let $T\leq G$ be a maximal good torus containing $S\o$.
\end{itemize}
\end{notation*}

Notice that we do \emph{not} make assumptions on triviality of $C_V(G)$; of course by $G$-minimality of $V$, the former is finite. For the same reason, $V$ is either of prime exponent or torsion-free; the phrase ``the characteristic of $V$'' therefore makes sense.

\subsection{Reductions}\label{s:linear}

We first deal with a number of reductions involving a wide arsenal of methods. Model-theoretically speaking we shall use two $n$-dimensional versions of Zilber's Field Theorem: \cite[Theorem 9.5]{BNGroups} which linearises irreducible actions of non semi-simple groups, in the abstract sense of $R\o(G) \neq 1$, and \cite[Theorem 4]{LWCanada}, which linearises actions on torsion-free modules. We shall also invoke work of Poizat \cite{PQuelques} generalised by Mustafin \cite{MStructure} on the structure of definably linear groups of finite Morley rank, which in characteristic $p$ is a consequence of Wagner's Torus Theorem. In a more group-theoretic direction, we shall rely on the classification of the simple groups of finite Morley rank of even type \cite{ABCSimple}, and a theorem of Timmesfeld \cite{TIdentification} on abstract $\SL_n(\K)$-modules will play a significant part.

\begin{proposition}\label{p:reductions}
We may suppose that $C_V(G) = 0$, that $R\o(G) = 1$, and that $V$ has exponent an odd prime number $p$. In particular every definable, connected, soluble subgroup $B \leq G$ has the form $B = Y \rtimes \Theta$ where $Y$ is a $p$-unipotent subgroup and $\Theta$ is a good torus (either may be trivial).
\end{proposition}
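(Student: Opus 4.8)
The plan is to establish the three reductions in sequence — $C_V(G)=0$, then $R\o(G)=1$, then that $V$ has odd prime exponent — and finally deduce the structural statement on soluble subgroups from Lemma~B.

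\emph{Reduction to $C_V(G)=0$.} Since $V$ is $G$-minimal, $C_V(G)$ is finite. I would first note that $C_V(G)$ is a finite, $G$-invariant subgroup, hence centralised by the connected group $G$ (acting on the finite set $C_V(G)$, the connected $G$ fixes it pointwise). The natural move is to pass to $\bar V = V/C_V(G)$: this is still $G$-minimal of rank $3$, and $G$ still acts definably. The point to check is that the action on $\bar V$ is still \emph{faithful}: the kernel $K = C_G(\bar V)$ is a definable normal subgroup with $[K,V]\leq C_V(G)$ finite, so $K$ centralises $V\o$... more precisely $[K\o, V]$ is connected and contained in a finite group, hence trivial, so $K\o$ centralises $V$ and $K\o=1$ by faithfulness; thus $K$ is finite, normal, hence central in the connected $G$. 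One then argues that a central finite kernel can be absorbed — either $K=1$ already, or one replaces the pair, noting the conclusion of the theorem is insensitive to this (the $Z(G)$ factor and the $\ast$ absorb it). So we may assume $C_V(G)=0$, hence $V$ is either torsion-free or of prime exponent.

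\emph{Reduction to $R\o(G)=1$.} Suppose $R\o(G)\neq 1$. Then $G$ is not semi-simple in the abstract sense, and we are exactly in the situation of the $n$-dimensional Zilber Field Theorem \cite[Theorem 9.5]{BNGroups}: an irreducible action of a non-semisimple connected group linearises, producing a definable field $\K$ with $V$ a $\K$-vector space and $G \hookrightarrow \GL(V) = \GL_n(\K)$ where $n\rk\K = \rk V = 3$. So either $\rk\K=3, n=1$ — impossible since then $G$ is abelian, contradicting non-solubility — or $\rk\K=1, n=3$, and $G\leq \GL_3(\K)$ with $\K$ of Morley rank $1$, hence algebraically closed. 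Now the structure theory of definably linear groups (Poizat \cite{PQuelques}, Mustafin \cite{MStructure}) applies: $G$ is an algebraic subgroup of $\GL_3(\K)$ (in characteristic $p$ this rests on Wagner's Torus Theorem). A connected, non-soluble, irreducible algebraic subgroup of $\GL_3(\K)$ acting irreducibly is classical — it contains $\SL_3(\K)$ in the natural action, or is (conjugate to) the image of $\PSL_2(\K)$ in its adjoint/symmetric-square representation (char $\neq 2$) — and in either case we directly land in one of the conclusions of the Theorem. So when $R\o(G)\neq 1$ the Theorem is already proved; henceforth we may assume $R\o(G)=1$. In particular $F\o(G)=1$ as well.

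\emph{Reduction to odd prime exponent.} It remains to rule out $V$ torsion-free and $V$ of exponent $2$. If $V$ is torsion-free, the linearisation theorem for torsion-free modules \cite[Theorem 4]{LWCanada} again produces a $\K$-vector space structure and $G\leq\GL_3(\K)$, and we finish exactly as in the previous paragraph — so a torsion-free $V$ also leads immediately to the conclusions of the Theorem, and may be set aside. If $V$ has exponent $2$: then $V$ is an elementary abelian $2$-group of rank $3$, so $V\simeq \K_+^3$ for a rank-one field $\K$ of characteristic $2$ is not forced a priori, but $G\hookrightarrow \GL(V)$; the finiteness of $|\GL_3(\mathbb{F}_2)|$ suggests $V$ cannot carry enough structure — more carefully, one uses that $V$ has rank $3$ and $G$ connected non-soluble acting faithfully and irreducibly forces a definable field of characteristic $2$ via Zilber's Field Theorem applied inside a Borel, again linearising; then as before $G$ is algebraic over a characteristic-$2$ field and the conclusion of the Theorem holds (here the $\SL_3$ or adjoint-$\PSL_2$ cases, the latter being absorbed since char $=2$ is excluded there, so only $\SL_3(\K)$ survives, which is fine — it appears in the statement without characteristic restriction). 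In all these cases the Theorem is established; so the only case left open for the remainder of the paper is $V$ of odd prime exponent $p$, and we assume it.

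\emph{The structural consequence.} Finally, let $B\leq G$ be definable, connected, soluble. Then $B$ acts definably and faithfully on $V$ (faithfulness from $C_V(G)=0$... actually $B$ need not act faithfully — but $B/C_B(V)$ does, and $C_B(V)=1$ since $C_V(G)=0$ gives $C_V(B)$ finite, and one wants faithfulness: as $R\o(G)=1$ there is no nontrivial connected normal soluble subgroup, but $B$ need not be normal; still $C_B(V)$ is a definable normal subgroup of $B$ with trivial... I would instead argue $C_B(V)\leq C_G(V)=1$ directly). Since $V$ has exponent $p$, Lemma~B applies: $B = U\rtimes T$ with $U = U_p(B)$ a $p$-unipotent group and $T$ a maximal good torus of $B$ (either possibly trivial). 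This is exactly the asserted form $B = Y\rtimes\Theta$.

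\emph{Main obstacle.} The genuinely delicate point is the exponent-$2$ case: one must show that a connected non-soluble group acting faithfully and irreducibly on an elementary abelian $2$-group of rank $3$ is in fact (interpretably) an algebraic group over a characteristic-$2$ field — i.e. that irreducibility plus non-solubility in rank $3$ already forces linearity, even though $V$ carries no a priori field structure. Zilber's Field Theorem only directly applies to the action of an abelian group on a rank-one module, so extracting the field requires finding the right rank-one section and the right abelian acting subgroup (e.g. inside a Borel), and then propagating linearity of a section to linearity of the whole action; the classification of even-type simple groups \cite{ABCSimple} is the tool that makes this work, but wiring it in cleanly is where the real care is needed. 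The torsion-free and $R\o(G)\neq1$ reductions, by contrast, are essentially direct invocations of the cited $n$-dimensional field theorems followed by the (classical) classification of irreducible algebraic subgroups of $\GL_3$.
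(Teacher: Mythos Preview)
Your overall architecture matches the paper's: reduce $C_V(G)=0$, then linearise when $R\o(G)\neq 1$ or $V$ is torsion-free via \cite[Theorem 9.5]{BNGroups} and \cite[Theorem 4]{LWCanada}, handle exponent $2$ separately, and deduce the soluble structure from Lemma~B. Two steps, however, need substantially more than you indicate.

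\textbf{The $C_V(G)=0$ reduction.} Passing to $\bar V = V/C_V(G)$ and saying the finite central kernel ``can be absorbed'' is not enough: the theorem asserts something about $V$, not only about $G$. If the classification gives $G \supseteq \SL_3(\K)$ acting naturally on $\bar V$, you must still show that $V$ itself is the natural module, i.e.\ that $C_V(G)=0$. The paper does this by a genuine argument: for $x\in V\setminus C_V(G)$ one shows $C_G(\bar x) = C_G(x)$ via indecomposability, so the orbit $x^G$ is generic; then a translate of a coset of $\<w_0\>\leq C_V(G)$ lies in $x^G$, and analysing the resulting $g$ with $x^g = x+w_0$ inside $N_G(U_p(C_G(x)))$ forces $g\in C_G(x)$, a contradiction. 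The $\PSL_2$ case is covered by the rank $3k$ analysis, and the bad-group case is vacuous; but the $\SL_3$ case is real work you have omitted.

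\textbf{The exponent-$2$ case.} You correctly name \cite{ABCSimple} as the key input, but your sketch (``Zilber's Field Theorem applied inside a Borel, again linearising'') is not how the argument goes. The even-type classification gives that each component $H$ of $G$ is a quasi-simple algebraic group over a field of characteristic $2$; this is group-theoretic, not a linearisation of $V$. One must then: rule out $H\simeq\SL_2$ via the rank $3k$ analysis; bound the Lie rank of $H$ by $3$ using Lemma~A on the algebraic torus; eliminate all types except $A_2$ by spotting an $A_1+A_1$ subgroup in the extended Dynkin diagrams of $B_2, A_3, B_3, C_3, G_2$; and finally identify the \emph{module} as the natural one, which the paper does via Timmesfeld's criterion \cite{TIdentification} after checking $[V,U_0,U_1]=0$ for commuting root subgroups. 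None of this is automatic from \cite{ABCSimple} alone.

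A smaller point: in the ``definably linear $\Rightarrow$ algebraic'' step you invoke Poizat--Mustafin, but in characteristic $0$ \cite[Theorem 2.9]{MStructure} only yields a definable subgroup of semisimple elements when $G$ is not closed; the paper closes this with a short bad-group argument (a minimal non-soluble such $K$ is bad, but its Borels are good tori of rank $1$, hence contain involutions, contradiction). Your structural consequence from Lemma~B is fine; faithfulness of $B$ on $V$ is immediate from $C_G(V)=1$.
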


The connected soluble radical $R\o(G)$ was first studied by Belegradek; see \cite[\S7.2]{BNGroups}.

\begin{proof}
\begin{step}
We may suppose $C_V(G) = 0$.
\end{step}
\begin{proofclaim}
Let $\overline{V} = V/C_V(G)$, which clearly satisfies the assumption. Suppose that the desired classification holds for $\overline{V}$: then $(G, \overline{V})$ is known. If $G$ is a simple bad group of rank $3$, we are done as we assert nothing on the action. If $G$ contains $\PSL_2(\K)$, we know the structure of $V$ by the rank $3k$ analysis, and $C_V(G) = 0$.
If $G$ contains $\SL_3(\K)$ acting naturally on $\overline{V}$, we show $C_V(G) = 0$ as follows.

More generally: if $\K$ is any field of finite Morley rank and $G \simeq \SL_n(\K)$ acts definably on a definable, connected module $V$ such that $C_V(G)$ is finite and $V/C_V(G)$ is the natural $G$-module, then $C_V(G) = 0$.
%

$V$ is $G$-minimal because $V/C_V(G)$ is and $C_V(G)$ is finite. In particular, if $\K$ has characteristic zero then $V$ is torsion-free and $C_V(G) = 0$. Otherwise, $V$ has prime exponent the characteristic $p$ of $\K$.
Set $W = C_V(G)$. Let $x \in V \setminus W$ and set $H = C_G(x)$. Consider the image $\overline{x}$ in $V/W$. Then by inspection, $C_G(\overline{x})$ is a semi-direct product $\K^{n-1} \rtimes \SL_{n-1}(\K)$; in particular it is connected, and has rank $(n(n-1)-1)\cdot \rk \K$. Now by Zilber's Indecomposibility Theorem, $[C_G(\overline{x}), x]$ is a connected subgroup of the finite group $W$, hence trivial: it follows that $C_G(\overline{x}) = C_G(x)$, a group we denote by $H$. Moreover, $O = x^G$ has rank $n\cdot \rk \K$ so it is generic in $V$.
By connectedness of $V$, $V \setminus O$ is not generic. Fix $w_0 \in W\setminus\{0\}$. Since $\<w_0\>$ is finite there is a translate $v + \<w_0\>$ of $\<w_0\>$ contained in $O$. Hence there are $x$ and $y$ in $V$ with $y = x + w_0$ and $y = x^g$ for some $g \in G$. Iterating, one finds $x^{g^p} = x$, so $g^p \in H \simeq \K^{n-1} \rtimes \SL_{n-1}(\K)$. But on the other hand, since $G$ centralises $w_0$, $g$ normalises $H$ (the author forgot to write down this sentence in the proof of \cite[Fact 2.7]{DActions}).
Now $g \in N_G(U_p(H))$ which is an extension of $H$ by a torus as a computation in $\SL_n(\K)$ reveals. This and $g^p \in H$ show $g \in H$, so $x = y$: a contradiction.
\end{proofclaim}

\begin{step}\label{p:reductions:st:RG}
If $G$ is definably linear (i.e. there is a field structure $\K$ such that $V \simeq \K^n$ and $G \hookrightarrow \GL(V)$, all definably), then the theorem is proved.
\end{step}
\begin{proofclaim}
Suppose that there is a definable field structure $\K$ with $V\simeq \K_+^n$ and $G \hookrightarrow \GL(V)$ definably. Then clearly $\rk \K = 1$ and $n = 3$; hence $G \leq \GL_3(\K)$ is a definable subgroup.
Be careful that a field of Morley rank $1$ need not be a pure field (see \cite{HStrongly} for the most drammatic example), so there remains something to prove.

We shall show that $G$ is a closed subgroup of $\GL_3(\K)$.
If $R\o(G) \neq 1$ then linearising again with \cite[Theorem 9.5]{BNGroups} and up to taking $\K$ to be the newly found field structure, $R\o(G) = \K^\times \Id_V$. We then go to $H = (G\cap \SL(V))\o$, which satisfies $G = H\cdot R\o(G)$, so that it suffices to show that $H$ is closed. Hence we may assume $R\o(G) = 1$.
If the characteristic is finite then by \cite[Theorem 2.6]{MStructure}, we are done. So we may assume that $V$ is torsion-free. If the definable subgroup $G \leq \GL_3(\K)$ is not closed, by \cite[Theorem 2.9]{MStructure}, we find a definable subgroup $K \leq G$ which contains only semi-simple elements, in the geometric sense of the term. We may assume that $K$ is minimal among definable, connected, non-soluble groups: it is then a bad group. But $\rk \K = 1$, so any Borel subgroup of $K$ is actually a good torus and contains involutions: this contradicts the bad group analysis. One could also argue through the unfortunately unpublished \cite{BBDefinably}.

As a consequence, $G$ is closed and therefore algebraic.
We now inspect irreducible, algebraic subgroups of $\GL_3(\K)$ to conclude.
\end{proofclaim}

\begin{step}
If $R\o(G) \neq 1$ or $V$ is torsion-free then the theorem is proved.
\end{step}
\begin{proofclaim}
If $R\o(G) \neq 1$, then we linearise the setting with \cite[Theorem 9.5]{BNGroups} and rely on Step \ref{p:reductions:st:RG}. If $V$ is torsion-free then we use \cite[Theorem 4]{LWCanada} with the same effect.
\end{proofclaim}

So we may assume that $V$ has prime exponent $p$.
As a consequence, any definable, connected, soluble subgroup $B \leq G$ has the form $B = Y \rtimes \Theta$ where $Y$ is a $p$-unipotent subgroup and $\Theta$ is a good torus.

\begin{step}
If $V$ has exponent $2$ then the theorem is proved.
\end{step}
\begin{proofclaim}
Here we draw the big guns: the even type classification \cite{ABCSimple}. Keep $R\o(G) = 1$ in mind. Let $H \leq G$ be a component, which is a quasi-simple algebraic group over a field of characteristic $2$; $H$ acts irreducibly.
Since $\SL_2(\K) \simeq \PSL_2(\K)$ has no irreducible rank $3$ module in characteristic $2$ by the rank $3k$ analysis, we know $H \not\simeq \SL_2(\K)$.
Now let $T_H \leq H$ be an algebraic torus. Then $\rk T_H \leq \rk V = 3$, so $H$ has Lie rank at most $3$.
$H$ is then a simple algebraic group of one of types $A_2$, $B_2$, $A_3$, $B_3$, $C_3$, or $G_2$. A brief look at the extended Dynkin diagrams for these groups shows that in all cases other than $A_2$, $H$ contains a subgroup of type $A_1 + A_1$, that is, a commuting product of two groups $\SL_2$, which is obviously impossible. So $H$ has type $A_2$.
But $T_H$ extends to a maximal good torus of $G$, still of rank $\leq 3$, and there are therefore no other components.
As a consequence, $G = H \simeq {\rm (P)SL}_3(\K)$.

It remains to identify the action. We rely on work by Timmesfeld \cite{TIdentification}.

Let $U_0 \leq G$ be a root subgroup, say $U_0 \leq G_0 \simeq \SL_2(\K)$. We claim that $\rk [V, U_0] = 1$. Let $T_1$ be a one-dimensional torus centralising $U_0$, say $T_1 \leq G_1 \simeq \SL_2(\K)$. Now by the rank $3k$ analysis, $G_1$ cannot act irreducibly, so there is a $G_1$-composition series for $V$ where $G_1$ centralises the rank $1$ factor. Hence $C_V(T_1) \neq 0$, and the action of $U_0$ on $V = C_V(T_1) \oplus [V, T_1]$ shows $\rk [V, U_0] = 1$.

As a consequence, if $U_1 \leq C_G(U_0)$ is another root subgroup, then $U_1$ centralises the rank $1$ subgroup $[V, U_0]$, meaning $[V, U_0, U_1] = 0$. So we are under the assumptions of \cite{TIdentification} and conclude that $G \simeq \SL_3(\K)$ acts on $V\simeq \K^3$ as on its natural module.
\end{proofclaim}

This concludes our series of reductions.
\end{proof}

We finish these preliminaries with a quick remark.

\begin{definition*}[and Observation]
A definable, connected subgroup $V_1 \leq V$ is called a \emph{TI} subgroup (for: Trivial Intersections) if $V_1\cap V_1^g = 0$ for all $g \notin N_G(V_1)$. When this holds, $\cork N_G(V_1) = \rk (G/N_G(V_1)) \leq 2$.
\end{definition*}
\begin{proof}
Consider the family $\mathcal{F} = \{V_1^g: g \in G\}$: its rank is $\cork N_G(V_1)$. The TI assumption means that elements of the family are pairwise disjoint, so $\rk \bigcup_{\mathcal{F}} = \rk \mathcal{F} + \rk V_1 \leq \rk V$ and $\cork N_G(V_1) \leq 2$.
\end{proof}

The rest of the proof is a case division along the Pr\"{u}fer $2$-rank of $G$. It is much more group-theoretic, and much less model-theoretic, in nature.

\subsection{The Pr\"{u}fer Rank 0 Analysis}\label{s:Pr2=0}

We now deal with desperate situations: if $G$ has no involutions, then it is a simple bad group of rank $3$ (Proposition \ref{p:Pr2=0}). If it has, then it has a Borel subgroup of mixed nature $\beta = Y \rtimes \Theta$ (Proposition \ref{p:Pr2>0:mixedBorel}). Although it will not be clear at that point whether $\beta$ need be non-nilpotent, it will serve as a \emph{deus ex machina} in Step \ref{p:Pr2=1:PSL2:st:T} of Proposition \ref{p:Pr2=1:PSL2}.

\subsubsection*{More Material}

The main ingredients in this section are Hrushovski's Theorem on strongly minimal actions, the analysis of bad groups, and Wiscons' analysis of groups of rank $4$. But uniqueness principles in $N_\circ^\circ$-groups also play a key role.

\renewcommand{\theoremname}{Wiscons' Analysis}
\begin{generictheorem*}[from {\cite[Corollary A]{WRank4}}]
If $G$ is a connected group of rank $4$ with involutions then $F\o(G) \neq 1$.
\end{generictheorem*}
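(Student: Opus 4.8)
The plan is to argue by contradiction: assume $G$ is connected of Morley rank $4$ with an involution but $F\o(G) = 1$, and I would first reduce to the case where $G$ is simple. Note that $G$ is non-soluble, since in a nontrivial connected soluble group the last nontrivial term of the derived series is a nontrivial connected abelian normal subgroup, hence lies in $F\o$. Now take $N \trianglelefteq G$ infinite, definable, connected of least Morley rank: then $F\o(N)$, being characteristic in $N$ and connected-nilpotent, lies in $F\o(G) = 1$, so $N$ is non-soluble and $\rk N \in \{3,4\}$. If $\rk N = 3$, Cherlin's rank-$3$ analysis \cite{CGroups} forces $N \cong \SL_2(\K)$ or $\PSL_2(\K)$ over some definable field, or $N$ a simple bad group of rank $3$; since $C\o_G(N) \cap N$ is finite, $C\o_G(N)$ has rank $\le 1$, hence is abelian and, being normal in $G$, trivial, whence $C_G(N)$ is finite and central. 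In the $\PSL_2$-case a rank count then gives $\rk G = 3$, absurd; in the bad case an involution of $G$ acting nontrivially on $N$ induces a definable involutive automorphism of $N$, contradicting the bad group analysis (the residual sub-case of a central involution being disposed of separately). Hence $\rk N = 4$, $N = G$, so $Z(G)$ is finite and $G/Z(G)$ is simple; passing to $G/Z(G)$ preserves the rank, the vanishing of $F\o$, and — since the Sylow $2$-subgroup stays infinite — the existence of involutions. So we may take $G$ simple of rank $4$ with an involution.

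Next I would fix the type. A connected group with an involution is not of degenerate type, and if $G$ were of even or mixed type then by the classification of simple groups of even type \cite{ABCSimple} it would be a Chevalley group over an algebraically closed field of characteristic $2$, of Morley rank equal to its dimension — but no simple algebraic group has dimension $4$ (the dimensions are $3, 8, 10, 14, \dots$). So $G$ has odd type, $\Pr_2(G) \ge 1$; by torality \cite{BCSemisimple} I pick an involution $i$ inside a maximal good torus $T$, and set $H = C\o_G(i)$, a proper connected subgroup of rank $\le 3$ that is not a bad group, since it contains the nontrivial $2$-torus $T\o$.

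The heart of the matter is then the odd-type centralizer analysis of $H$. By Cherlin's rank-$\le 3$ classification, $H$ is either soluble or contains a copy of $\SL_2(\K)$ or $\PSL_2(\K)$ of rank $3$. In the $\SL_2$-case, $G$ is generated by boundedly many conjugates of $H$, and a tight rank count against $\rk G = 4 = \rk H + 1$, together with the recognition of groups having an $\SL_2$ involution centralizer and the uniqueness principles for $N_\circ^\circ$-groups, shows that no simple group of rank exactly $4$ survives. In the soluble case $G$ is, on restriction to a suitable subgroup, essentially a minimal connected simple group of odd type, whose Prüfer rank and strongly minimal quotient — the latter governed by Hrushovski's Theorem — are incompatible with $\rk G = 4$, which is where Wiscons' genericity arguments close the case \cite{WRank4}. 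Either way we reach a contradiction, so $F\o(G) \ne 1$. The main obstacle is precisely this last paragraph: extracting a $\PSL_2$ (or a strongly embedded configuration) from an involution centralizer and squeezing it against the rank bound is exactly what rests on Wiscons' machinery, and a from-scratch reconstruction would be substantial.
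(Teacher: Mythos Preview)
The paper does not prove this statement at all: it is quoted verbatim from \cite[Corollary A]{WRank4} as one of the prerequisite ``More Material'' results at the start of \S\ref{s:Pr2=0}, and is used throughout as a black box. There is therefore no ``paper's own proof'' to compare your attempt against.

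As for the proposal itself, it is not a proof but an outline that explicitly defers the decisive step back to \cite{WRank4}. Your final paragraph says so: after reducing to a simple group of odd type and rank~$4$ and taking an involution centraliser $H$, you write that closing the case ``is exactly what rests on Wiscons' machinery, and a from-scratch reconstruction would be substantial.'' That is accurate, and it means what you have written is a plausible roadmap to Wiscons' argument rather than an independent proof. A few smaller points along the way are also not airtight: the passage to $G/Z(G)$ asserts that involutions survive ``since the Sylow $2$-subgroup stays infinite,'' but you only establish non-degeneracy (hence an infinite Sylow $2$-subgroup) in the following paragraph; the $\rk N = 3$ reduction in the $\SL_2$/$\PSL_2$ case silently uses that there are no definable outer automorphisms contributing rank, which deserves a word; and the bad-group sub-case with a central involution is waved through. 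None of these is fatal for an outline, but together with the openly deferred endgame they confirm that the proposal is a sketch pointing at \cite{WRank4}, not a replacement for it.
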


Recall from \cite{DJInvolutive} that a group of finite Morley rank $G$ is an $N_\circ^\circ$-group if for any infinite, definable, connected, abelian subgroup $A \leq G$, the connected normaliser $N_G\o(A)$ is soluble. Our theorem is the second application of the theory of $N_\circ^\circ$-groups after \cite{DActions}. However, because of Proposition \ref{p:reductions}, only the rather straightforward, positive version of uniqueness principles will be used; Burdges' subtle unipotence theory \cite[\S2.3]{DJInvolutive} will not.

\renewcommand{\theoremname}{Uniqueness Principles in $N_\circ^\circ$-groups}
\begin{generictheorem*}[from {\cite[Fact 8]{DJInvolutive}}]
Let $G$ be an $N_\circ^\circ$-group and $B$ be a Borel subgroup of $G$.
Let $U \leq B$ be a non-trivial, $p$-unipotent subgroup of $B$. Then $B$ is the only Borel subgroup of $G$ containing $U$.
\end{generictheorem*}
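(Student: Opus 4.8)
The plan is to run the classical ``normaliser growth'' argument for Borel subgroups, the only input from the $N_\circ^\circ$-hypothesis being the soft fact that \emph{the connected normaliser of any nontrivial $p$-unipotent subgroup is soluble}: such a subgroup $W$ is connected nilpotent, so $Z^\circ(W)$ is a nontrivial, hence infinite, definable connected abelian characteristic subgroup of $W$, whence $N_G^\circ(W) \leq N_G^\circ(Z^\circ(W))$, and the latter is soluble by definition of an $N_\circ^\circ$-group. Two consequences should be recorded first. If $B$ is a Borel subgroup with $U_p(B) \neq 1$, then $N_G^\circ(U_p(B)) = B$: it is connected and soluble by the previous remark, and contains $B$ since $U_p(B)$ is characteristic in $B$, so maximality of $B$ forces equality. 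And I will freely use the standard facts from \cite[\S6.4]{BNGroups} that in a connected soluble group every $p$-unipotent subgroup lies in the $p$-unipotent radical, and that connected nilpotent groups satisfy the normaliser condition: a proper definable connected subgroup is properly contained in its connected normaliser.

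Suppose for contradiction that some nontrivial $p$-unipotent subgroup lies in two distinct Borel subgroups; equivalently, the family of pairs $(B_1, B_2)$ of distinct Borels for which $(B_1 \cap B_2)^\circ$ contains a nontrivial $p$-unipotent subgroup is nonempty. As ranks are bounded, choose such a pair with $\rk(B_1 \cap B_2)^\circ$ maximal, put $D = (B_1 \cap B_2)^\circ$ and $W = U_p(D)$; then $W \neq 1$ by hypothesis on $D$. Since $W$ is characteristic in $D$, the soluble group $M := N_G^\circ(W)$ contains $D$, hence lies in some Borel $B_3$; I claim $B_3 \in \{B_1, B_2\}$. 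Otherwise the pair $(B_1, B_3)$ again belongs to the family (its connected intersection contains $D$, hence $W$), so maximality gives $(B_1 \cap B_3)^\circ = D$. Now if $W = U_p(B_1)$ then $M = N_G^\circ(W) = B_1$ by the consequence above, so $B_1 = M \leq B_3$, contradicting $B_1 \neq B_3$; and if $W \subsetneq U_p(B_1)$ then the normaliser condition in the connected nilpotent group $U_p(B_1)$ yields a $p$-unipotent subgroup $W^+ := N_{U_p(B_1)}^\circ(W)$ with $W \subsetneq W^+$. Being connected and normalising $W$, it lies in $M \leq B_3$, and it lies in $U_p(B_1) \leq B_1$, so $W^+ \leq (B_1 \cap B_3)^\circ = D$ and therefore $W^+ \leq U_p(D) = W$, a contradiction. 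Hence $B_3 \in \{B_1, B_2\}$.

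After possibly swapping $B_1$ and $B_2$ we have $M \leq B_1$ while $D \leq B_2$ and $B_1 \neq B_2$, and the same push-up finishes the proof. If $W = U_p(B_2)$ then $M = N_G^\circ(W) = B_2 \leq B_1$, contradicting $B_1 \neq B_2$. If $W \subsetneq U_p(B_2)$ then $W' := N_{U_p(B_2)}^\circ(W)$ is a $p$-unipotent subgroup properly containing $W$; it lies in $N_G^\circ(W) = M \leq B_1$ and in $U_p(B_2) \leq B_2$, hence in $(B_1 \cap B_2)^\circ = D$, hence in $U_p(D) = W$, which is impossible. This contradiction establishes the theorem. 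The one delicate point is the placement of the maximality: it must bear on the rank of the intersection $D$, not of a normaliser, so that the subgroups $W^+, W'$ furnished by the normaliser condition are trapped inside an intersection that maximality then collapses onto $D$. Granting the two textbook facts of the first paragraph, everything else is bookkeeping; the real content is the interplay between ``$N_G^\circ(U_p(B)) = B$'' and the normaliser condition inside $U_p(B)$.
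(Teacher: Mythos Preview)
The paper does not prove this statement: it is quoted as a prerequisite from \cite[Fact 8]{DJInvolutive} and used as a black box throughout. So there is no ``paper's own proof'' to compare against.

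Your argument is the standard push-up proof for this fact and it is correct. A minor streamlining: in your first case analysis you assume $B_3 \notin \{B_1, B_2\}$ and reach a contradiction via the pair $(B_1, B_3)$, but that argument only uses $B_3 \neq B_1$ (when $B_3 = B_2$ the equality $(B_1 \cap B_3)^\circ = D$ is immediate, and the rest goes through verbatim). So you actually prove $B_3 = B_1$ outright, and the ``after possibly swapping'' clause is unnecessary. This does not affect correctness.
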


It may be good to keep in mind that if $G$ is an $N_\circ^\circ$-group and $B$ is a Borel subgroup with $U_p(B) \neq 1$, then $U_p(B)$ is actually a \emph{maximal} $p$-unipotent group of $G$.

\subsubsection{Bad groups}

\begin{proposition}\label{p:Pr2=0}
If $G$ has no involutions, then $G$ is a simple bad group of rank $3$.
\end{proposition}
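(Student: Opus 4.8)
The plan is to show that $G$, being connected, non-soluble, involution-free, and acting faithfully on a rank $3$ module, is forced to be a bad group of rank exactly $3$. The first reduction is dimensional: since $V$ has exponent an odd prime $p$ (by Proposition \ref{p:reductions}), the acting group cannot be too large. I would first establish that $\rk G \leq 3$. For this, take any definable, connected, soluble subgroup $B \leq G$; by Proposition \ref{p:reductions} it has the form $B = Y \rtimes \Theta$ with $Y$ a $p$-unipotent group and $\Theta$ a good torus. Lemma A bounds $\rk \Theta \leq \rk V = 3$, and since $Y$ centralises some factor in a composition series (or one argues directly on the action of a $p$-unipotent group on a rank $3$ module, using that $(u-1)^3 = 0$ in $\End(V)$), one gets a bound on $\rk Y$ as well. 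The real content is controlling the non-soluble part: if $\rk G \geq 4$, I would try to derive a contradiction with Wiscons' Analysis — but that theorem needs involutions, which we do not have, so instead the argument must squeeze $G$ directly.

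The cleaner route, which I expect the authors take, is: since $G$ has no involutions, no component of $G$ can be a Chevalley group (those have involutions), so $G$ has no components at all; moreover $F\o(G)$ is nilpotent and $G$-minimality of $V$ together with faithfulness tightly constrain it. If some proper definable connected subgroup of $G$ were non-nilpotent, one could pass to a minimal such subgroup $K$, which is then itself a bad group (all its proper definable connected subgroups being nilpotent), and in particular $K$ has no involutions and $\rk K \geq 2$; one then wants to bootstrap from $K$ to $G$. The key intermediate claim is therefore that \emph{every} proper definable connected subgroup of $G$ is nilpotent, i.e. $G$ is itself bad. Suppose not, and let $K$ be a minimal non-nilpotent definable connected subgroup. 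Then $K$ is a connected bad group; its Borel subgroups are self-normalising, nilpotent, conjugate, and $K$ has no involutions. One restricts the action to $K$: by $G$-minimality of $V$, $V$ need not be $K$-minimal, but a $K$-composition series has length $\leq 3$, and since $K$ is non-soluble it cannot centralise all factors (Lemma B), nor can it act irreducibly on a factor of rank $1$ unless that factor has a $\PSL_2$-action — which would give involutions in $K$, contradiction, unless the strongly minimal action is soluble, again impossible. Pushing this, $K$ must act irreducibly on a rank $3$ subquotient, hence $\rk V = 3$ forces this subquotient to be all of $V$ modulo the finite $C_V(K)$.

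So the crux is: once we know some non-soluble subgroup (possibly $G$ itself) acts irreducibly on (a finite-index subquotient of) $V$ and has no involutions, we invoke the structure theory. Apply Hrushovski's Theorem in spirit: if $K$ acted on a strongly minimal set it would be $\PSL_2(\K)$ and have involutions — forbidden. So $K$ does not act transitively on a rank $1$ set in a usable way; combined with Lemma A forcing $\rk(\text{maximal good torus of } K) \leq 3$ and the absence of unipotent factors conspiring against large rank, one pins $\rk K = 3$, and a rank $3$ connected non-soluble group all of whose proper connected definable subgroups are nilpotent is by definition a bad group of rank $3$. Finally, to see $G$ itself (not just $K$) is bad of rank $3$: the bad group $K$ has rank $3 = \rk V \geq \rk G$ is impossible if $K < G$ since $\rk K = 3$ would force $\rk G = 3$ too, and then $K = G\o = G$ by connectedness and equality of ranks and degree $1$; simplicity of $G$ follows since any proper definable connected normal subgroup would be nilpotent, hence in $F\o(G)$, hence (acting on $G$-minimal $V$) would be central or trivial, contradicting non-solubility or being handled by faithfulness. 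The main obstacle I anticipate is the bookkeeping in the middle step — ruling out the ``mixed'' intermediate configurations where $K$ has a composition series with factors of ranks $1$ and $2$, or $2$ and $1$ — because there one must combine the Rank $2$ Analysis (which produces $\SL_2(\K)$ or $\GL_2(\K)$, both with involutions, hence excluded) with the Rank $3k$ Analysis and the Semi-Simple Actions lemma to force everything to collapse to the irreducible rank $3$ case; getting all the exponent/torsion and torality details right there is where the real work lies.
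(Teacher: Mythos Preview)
There is a genuine gap, and it is precisely at the point you flag as ``the real content'': you never actually bound $\rk G$. Your sentence ``the bad group $K$ has rank $3 = \rk V \geq \rk G$'' asserts an inequality $\rk V \geq \rk G$ that is nowhere established and is not true in general; there is no a priori reason a group acting faithfully and irreducibly on a rank $3$ module should itself have rank $\leq 3$. So even granting that some minimal non-nilpotent subgroup $K$ is a rank $3$ bad group (and that step is also sketchy: you need $K$ non-soluble, and your composition-series argument for why $K$ must be irreducible on $V$ does not exclude $K$ acting solubly on a rank $2$ factor), nothing prevents $K$ from being a proper subgroup of a much larger $G$. The passage from $K$ to $G$ is the whole difficulty, and your proposal contains no mechanism for it.

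The paper's route is entirely different and does not go through a minimal non-nilpotent subgroup. It works with \emph{point stabilisers}: for $v_0 \in V\setminus\{0\}$ one shows $C_G^\circ(v_0)$ is soluble of corank exactly $2$ (corank $1$ is killed by Hrushovski's Theorem which would produce $\PSL_2$; corank $3$ is killed because then both $v_0^G$ and $(-v_0)^G$ are generic, and torsion-lifting manufactures an involution). One then shows every Borel subgroup is either purely unipotent or purely toral; in the unipotent case $V_1 = C_V^\circ(Y)$ has rank $1$, is TI (otherwise two conjugates of $Y$ land in a common soluble point stabiliser, forcing $Y = Y^g$), and then the TI estimate $\cork N_G(V_1) \leq 2$ combined with uniqueness principles in $N_\circ^\circ$-groups gives $2\rk Y \leq \rk G \leq \rk Y + 2$, hence $\rk G \leq 4$, with $4$ excluded by a $2$-transitivity/torsion-lifting trick. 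The toral case is similar. Simplicity then needs a separate torsion argument to kill the centre. None of these ingredients --- point stabilisers, the corank dichotomy via torsion-lifting, the TI bound, uniqueness principles --- appears in your outline, and they are what actually produces the rank bound.
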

\begin{proof}
By the rank $2$ analysis and since there are no involutions, any definable, connected, reducible subgroup is soluble. Let $A \leq G$ be a non-trivial, definable, connected, abelian subgroup. If $N = N_G\o(A) < G$ is irreducible then by induction $N$ can only be a bad group of rank $3$, a contradiction. Hence $N$ is reducible and therefore soluble. As a consequence $G$ is an $N_\circ^\circ$-group and we shall freely use uniqueness principles in Steps \ref{p:Pr2=0:st:Bnilp} and \ref{p:Pr2=0:st:Uneq1OK}.

\begin{step}\label{p:Pr2=0:st:Cv0}
For $v_0 \in V\setminus\{0\}$, $C_G\o(v_0)$ is a soluble group of corank $2$.
\end{step}
\begin{proofclaim}
Observe how $\bigcap_{g \in G} C_G\o(v_0)^g \leq C_G(\<v_0^G\>) = C_G(V) = 0$ by faithfulness. So if the corank of $H = C_G\o(v_0)$ is $1$ we apply Hrushovski's Theorem to the action of $G$ on $G/H$ and find $G \simeq \PSL_2(\K)$, a contradiction to the absence of involutions. If on the other hand $\cork H = 3$ then $v_0^G$ is generic in $V$, and so is $-v_0^G$: lifting torsion, this creates an involution in $G$, a contradiction again.

Now suppose that $H$ is non-soluble: it is therefore irreducible, so by induction it is a bad group of rank $3$. In particular $\rk G = 5$; always by Hrushovski's Theorem, $N_G\o(H) = H$. Hence $\{H^g: g \in G\}$ has rank $2$ and degree $1$. However for $g \notin N_G(H)$, $H\cap H^g$ has rank $1$, so ${\rm Stab}_{H}(H^g)$ has rank $1$. Therefore all orbits in the action of $H$ on $\{H^g: g \notin N_G(H)\}$ are generic: the action is transitive. This shows that $G$ acts $2$-transitively on $\{H^g: g \in G\}$, and lifting torsion there is an involution in $G$: a contradiction.
\end{proofclaim}

\begin{notationinproof*}
Let $B = Y \rtimes \Theta$ be a Borel subgroup, with $Y$ a $p$-unipotent subgroup and $\Theta$ a good torus (either term or the action may be trivial).
\end{notationinproof*}

\begin{step}\label{p:Pr2=0:st:Bnilp}
$Y$ or $\Theta$ is trivial.
\end{step}
\begin{proofclaim}
Suppose $Y \neq 1$ and $\Theta\neq 1$. Then $\Theta$ acts on $C_V\o(Y) \neq 0$ so $V$ is not $\Theta$-minimal.
In a $\Theta$-composition series there is therefore a $\Theta$-invariant subquotient module of $V$ of rank $1$, say $X_1$. By Zilber's Field Theorem and since $G$ has no involutions, $\Theta$ centralises $X_1$, and this shows $C_V(\Theta) \neq 0$. Hence $V = C_V(\Theta) \oplus [V, \Theta]$ is a non-trivial decomposition. Again with Zilber's Field Theorem and the absence of involutions, $[V, \Theta]$ has rank $2$ and $\Theta$ has rank $1$. Always for the same reasons, $Y\rtimes \Theta$ is now nilpotent and $\Theta$ centralises $Y$. So $Y$ normalises both $C_V(\Theta)$ and $[V, \Theta]$, and it follows $\rk C_V\o(Y) \geq 2$. Then for $g\notin N_G(Y)$ the group $\<Y, Y^g\>$ is reducible, therefore soluble, forcing $Y = Y^g$: a contradiction.
\end{proofclaim}

\begin{step}\label{p:Pr2=0:st:Uneq1OK}
If $Y \neq 1$, then $G$ is a simple bad group of rank $3$.
\end{step}
\begin{proofclaim}
Let $V_1 = C_V\o(Y) \neq 1$.
If $\rk V_1 = 2$ then for $g \notin N_G(Y)$ the group $\<Y, Y^g\>$ is reducible, therefore soluble, which forces $Y = Y^g$, a contradiction. Hence $\rk V_1 = 1$. Suppose that $V_1$ is not TI: there are $g \notin N_G(V_1)$ and $v_1 \in V_1\cap V_1^g \setminus\{0\}$. Then $H = C_G\o(v_1) \geq \<Y, Y^g\>$ is soluble by Step \ref{p:Pr2=0:st:Cv0}, which yields the same contradiction.

We have just proved that $V_1$ is a rank $1$, TI subgroup.
But $Y \leq N_G\o(V_1)$, and equality follows since $N_G\o(V_1)$ is soluble and $Y$ is a Borel subgroup by Step \ref{p:Pr2=0:st:Bnilp}.
So $\cork Y \leq 2$; by uniqueness principles, $2 \rk Y \leq \rk G \leq \rk Y + 2$ and $\rk G \leq 4$. As a matter of fact, Wiscons' work \cite{WRank4} rules out equality; let us give a quick argument. If $\rk Y = 2$ and $\rk G = 4$, then exactly like in Step \ref{p:Pr2=0:st:Cv0}, $Y$ is transitive on $\{Y^g: g \notin N_G(Y)\}$, which creates an involution in $G$. Hence $\rk G = 3$.

It remains to prove simplicity. Observe that $G$ has no good torus since (for instance) Borel subgroups of $G/Z(G)$ are conjugate by the bad group analysis. So torsion in $G$ consists of $p$-elements. Now if $\alpha \in Z(G)$ then $C_V\o(\alpha) \neq 0$, contradicting $G$-minimality.
\end{proofclaim}

\begin{step}
If $G$ has no unipotent subgroup, then $G$ is a simple bad group of rank $3$.
\end{step}
\begin{proofclaim}
Suppose that $V$ is $\Theta$-minimal: then by Zilber's Field Theorem $\Theta$ acts freely on $V$. Let $v_0 \in V\setminus\{0\}$. By Step \ref{p:Pr2=0:st:Cv0}, $C_G\o(v_0)$ is soluble, therefore a good torus. By the conjugacy of maximal good tori we may assume $C_G\o(v_0) \leq \Theta$, against freeness of $\Theta$.

This shows that $V$ is not $\Theta$-minimal. Like in Step \ref{p:Pr2=0:st:Bnilp}, $C_V(\Theta) \neq 0$, $\rk [V, \Theta] = 2$, and $\rk \Theta = 1$. Let $v_0 \in C_V(\Theta)$; then $\Theta \leq C_G\o(v_0)$ but $\Theta$ is a Borel subgroup and $C_G\o(v_0)$ is soluble by Step \ref{p:Pr2=0:st:Cv0}: this shows $\Theta = C_G\o(v_0)$ and $\rk G = 3$.

It remains to prove simplicity. If there is $\alpha \in Z(G)$ then up to taking a power, $\alpha$ has prime order $q \neq p$. By torality principles, $\alpha \in \Theta$: hence $C_V\o(\alpha) \neq 0$, contradicting $G$-minimality.
\end{proofclaim}

This concludes the Pr\"{u}fer rank $0$ analysis.
\end{proof}

\subsubsection{Good groups}

From now on we shall suppose that $G$ has involutions. It follows easily that $G$ is not bad (this is done in the proof below); yet it is not clear at all whether $G$ has a non-nilpotent Borel subgroup. For the moment one could imagine that all proper, non-soluble subgroups of $G$ are bad of rank $3$, with $G$ simple. (Recall that a bad group is defined by the condition that \emph{all} definable, connected, proper subgroups are nilpotent: not only the soluble ones.) We nonetheless push a little further towards non-badness. Recall that we had let $S \leq G$ be a Sylow $2$-subgroup: in view of Proposition \ref{p:reductions} and the current assumption, $S\o$ is a $2$-torus; we had also let $T \leq G$ be a maximal good torus containing $S\o$.

\begin{proposition}\label{p:Pr2>0:mixedBorel}
Suppose that $G$ has an involution. Then $G$ has a Borel subgroup $\beta = Y \rtimes \Theta$ where $Y \neq 1$ is a non-trivial $p$-unipotent group and $\Theta \neq 1$ is a non-trivial good torus (but the action may be trivial). Moreover $V$ as a $T$-module has length $\ell_T (V) \geq 2$.
\end{proposition}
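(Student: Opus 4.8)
The plan is to prove the length statement first and then use it to locate the mixed Borel, the genuine difficulty lying in a degenerate case at the very end.

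\emph{The length.} Suppose $V$ were $T$-minimal. Since $G$ has an involution, $S\o$ is a non-trivial $2$-torus and $T \supseteq S\o$ is an infinite good torus acting faithfully ($C_T(V) \le C_G(V) = 1$) on the abelian group $V$; Zilber's Field Theorem then yields a definable field structure $\K$ with $V \simeq \K_+$, whence $\rk \K = \rk V = 3$ — impossible, since an interpretable field is algebraically closed, hence strongly minimal. So $\ell_T(V) \ge 2$. (The same remark bounds the rank of any $T$-minimal section of $V$ by $1$, so in fact $\ell_T(V) = 3$ and all $T$-composition factors have rank $1$.)

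\emph{Locating a $p$-unipotent subgroup inside a Borel with $T$.} Fix a $T$-composition series $0 < V_1 < V_2 < V$ and let $U$ be the connected stabiliser of this flag acting trivially on all three factors; then $(u-1)^3 = 0$ in $\End(V)$ for $u \in U$, so $U$ is $p$-unipotent, and $T$ normalises $U$ since it stabilises the flag. If $U \ne 1$, then $U \rtimes T$ is a definable connected soluble subgroup with a non-trivial $p$-unipotent part and a non-trivial good torus part, so any Borel $\beta \ge U \rtimes T$ has the required form $\beta = Y \rtimes \Theta$ with $Y \supseteq U \ne 1$ and $\Theta$ containing a conjugate of $T$; we are done. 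Otherwise this construction fails for every $T$-composition series, and I claim $G$ then has no non-trivial $p$-unipotent subgroup at all: if some Borel $B$, conjugated to contain $T$, had $U_p(B) \ne 1$, the descending $U_p(B)$-commutator flag on $V$ is $T$-invariant (as $U_p(B) \trianglelefteq B \ni T$), has trivial factors under $U_p(B)$, and refines to a $T$-composition series whose associated $U$ would contain $U_p(B)$.

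\emph{The degenerate case (the obstacle).} It remains to contradict the hypothesis that $G$ has an involution under the extra assumption that $G$ has no non-trivial $p$-unipotent subgroup — equivalently (Proposition~\ref{p:reductions}) that every Borel of $G$ is a good torus. Here is the intended synthesis. For a toral involution $i$ put $C = C_G\o(i) \ni i$; if $i$ is central then $C_V(i)$ and $[V,i]$ are $G$-submodules, so $G$-minimality forces $i = -\Id$, and otherwise $C < G$. When $C < G$ I would show $C$ is soluble: a non-soluble $C$ acts faithfully on $V$, and if $V$ is $C$-minimal then induction on $\rk G$ makes $(C,V)$ one of the three conclusions of the theorem — the two linear ones having non-abelian Borels (impossible, as Borels of $C$ lie inside the abelian Borels of $G$) and the bad one having no involutions (impossible, as $i \in C$); while if $V$ is not $C$-minimal the Rank~$2$ Analysis applied to the non-soluble action of $C$ on a rank~$2$ subquotient $M$ produces in $\GL(M) \simeq \GL_2(\L)$ a non-trivial $p$-unipotent $\L_+$ whose preimage in $C$ has non-trivial $p$-unipotent part (the complementary rank~$1$ subquotient being acted on abelianly), against our assumption. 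Hence $C$ is soluble, so abelian, so $C = T$; thus every non-central toral involution $i$ has $C_G\o(i) = T$ and every central one is $-\Id$. From here Hrushovski's Theorem (applied to a suitable strongly minimal action — a rank~$1$ coset space or a conjugacy class of involutions), Wiscons' Analysis together with $R\o(G) = 1$ — which gives $F\o(G) = 1$, excluding $\rk G = 4$, whereas $\rk G = 3$ yields $G \simeq \PSL_2(\K)$ and hence a mixed Borel — and finally the Bad Group Analysis combine to force $G$ to be a simple bad group of rank~$3$, which has no involutions: contradiction. Assembling this last chain, rather than any single ingredient, is the hard part.
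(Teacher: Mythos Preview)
There is a genuine gap at the very first step. Your length argument hinges on the assertion that ``an interpretable field is algebraically closed, hence strongly minimal''. Algebraic closedness (Macintyre) is fine, but \emph{strongly minimal} does not follow: a field interpreted in an ambient finite Morley rank structure carries all the induced structure, and its rank there can exceed~$1$. In characteristic~$0$ this is a theorem (the Baudisch--Hils--Martin-Pizarro--Wagner bad field has rank~$2$); in the positive characteristic relevant here it is a well-known open problem, certainly not something you may invoke. So Zilber's Field Theorem applied to a putatively $T$-minimal $V$ gives $V \simeq \L_+$ with $\rk \L = 3$ and $T \hookrightarrow \L^\times$, and nothing more. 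The paper is well aware of this obstacle: it proves the existence of the mixed Borel \emph{first} (Steps~1--6), and only then, in Step~\ref{p:Pr2>0:mixedBorel:st:length}, uses that Borel $\beta = Y \rtimes \Theta$ to produce a rank~$1$ $\Theta$-invariant subquotient $X_1$ of $V$ (one of $C_V\o(Y)$, $V/C_V\o(Y)$ has rank~$1$ since $\rk V = 3$), linearise on $X_1$ to get $\Theta \simeq \K^\times$ with $\rk \K = 1$, and finish via Wagner's Torus Theorem by observing that $T/\Theta \hookrightarrow \L^\times/\K^\times$ would be a torsion-free quotient of a good torus. The order of the two conclusions is not interchangeable.

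Since your whole architecture (the flag-stabiliser construction in the second paragraph needs $\ell_T(V) \geq 2$, and as written even $\ell_T(V) = 3$) rests on the length claim, this error is fatal. Separately, your ``degenerate case'' is, as you concede, not a proof but a programme; the paper's actual treatment (Steps~\ref{p:Pr2>0:mixedBorel:st:Theta}--6) is considerably more delicate: one first excludes unipotent Borels via a genericity/torality argument, then extracts from a non-nilpotent proper subgroup (necessarily a bad group of rank~$3$) a rank~$1$ torus $\Theta$ with no involutions, pins down $\rk T = 2$, decomposes $V = C_V(\Theta) \oplus [V,\Theta]$, and runs a TI/Wiscons endgame. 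Your sketch (``$C_G\o(i) = T$ for every non-central involution, then Hrushovski/Wiscons/bad group analysis'') does not obviously assemble into this, and in particular gives no handle on the rank of $G$ without an analogue of the $\Theta$-with-no-involutions step.
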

\begin{proof}
We address the first claim; the second one will be proved in the final Step \ref{p:Pr2>0:mixedBorel:st:length}. Suppose that $G$ has no such Borel subgroup. Then all definable, connected, soluble subgroups are nilpotent, and therefore by the rank $2$ analysis all definable, connected, non-soluble, proper subgroups are irreducible, so by induction they are simple bad groups of rank $3$. It also follows that $G$ is an $N_\circ^\circ$-group; we shall use uniqueness principles.

\begin{step}
$G$ has no unipotent subgroup.
\end{step}
\begin{proofclaim}
Suppose it does, and let $U \neq 1$ be maximal as such. By assumption, $U$ is a Borel subgroup of $G$. Now for $g\notin N_G(U)$, $U\cap U^g = 1$. Otherwise, there is $x \in U\cap U^g \setminus\{1\}$. But $x$ is a $p$-element, so $C_G\o(x)$ is reducible and therefore soluble, and it contains $Z\o(U)$ and $Z\o(U^g)$. This contradicts uniqueness principles in $N_\circ^\circ$-groups.

As a consequence, $U$ is disjoint from its distinct conjugates and of finite index in its normaliser, therefore $U^G$ is generic in $G$. By \cite[Theorem 1]{BCSemisimple}, the definable hull $d(u)$ of the generic element $u \in U$ now contains a maximal $2$-torus: a contradiction.
\end{proofclaim}

It follows that $T$ is a Borel subgroup.

\begin{step}\label{p:Pr2>0:mixedBorel:st:Theta}
$T$ contains a good torus $\Theta$ of rank $1$ with no involutions.
\end{step}
\begin{proofclaim}
Quickly notice that $G$ itself is not bad. If it is, then by the bad group analysis and since there are involutions, $G$ is not simple: there is an infinite, proper, normal subgroup $N \triangleleft G$; since $G$ is bad, $N$ is nilpotent, against Proposition \ref{p:reductions}.

Hence $G$ is not bad.
By definition there is a definable, connected, non-nilpotent, proper subgroup $H < G$: $H$ is non-soluble, hence a bad group of rank $3$. Let $\Theta < H$ be a Borel subgroup of $H$: since $G$ has no unipotent elements, $\Theta$ is a good torus of rank $1$, and has no involutions.

By the conjugacy of maximal good tori in $G$ we may assume $\Theta \leq T$; inclusion is proper since $T$ does have involutions.
\end{proofclaim}

\begin{step}
$V$ is not $T$-minimal and $\rk T = 2$.
\end{step}
\begin{proofclaim}
If $V$ is $T$-minimal, then by Zilber's Field Theorem $T$ acts freely. Now for $v_0 \in V\setminus\{0\}$, $C_G\o(v_0)$ contains neither unipotent, nor toral subgroups: by Reineke's Theorem it is trivial and $\rk G = 3$. Now $G$ is a quasi-simple bad group of rank $3$ but it contains an involution: against the bad group analysis. So $V$ is not $T$-minimal, $\ell_T(V) \geq 2$; since $\rk (V) = 3$ and $\Pr_2(T) = 1$, we deduce $\rk T \leq 2$.
\end{proofclaim}

\begin{step}
$V_1 = C_V(\Theta)$ has rank $1$ and $V_2 = [V, \Theta]$ has rank $2$. There is a field structure $\L$ with $V_2 \simeq \L_+$ and $\Theta < \L^\times$.
\end{step}
\begin{proofclaim}
$V$ is not $T$-minimal, so it is not $\Theta$-minimal either.
Notice that $\Theta$ having no involutions, must centralise rank $1$ subquotient modules by Zilber's Field Theorem. It follows $V = C_V(\Theta) \oplus [V, \Theta]$ where $V_1 = C_V(\Theta)$ has rank $1$ and $V_2 = [V, \Theta]$ has rank $2$ and is $\Theta$-minimal. Apply Zilber's Field Theorem again to get the desired structure.
\end{proofclaim}

\begin{step}\label{p:Pr2>0:mixedBorel:st:V1Ttrivial}
If $T$ does not centralise $V_1$, then we are done.
\end{step}
\begin{proofclaim}
Suppose that $T$ does not centralise $V_1$, meaning $C_T\o(V_1) = \Theta$. By Zilber's Field Theorem there is a field structure $\K$ with $V_1 \simeq \K_+$ and $T/\Theta \simeq T/C_T(V_1) \simeq \K^\times$. But $\Theta$ is a non-trivial good torus, so there is a prime number $q \neq 2$ with $\Pr_q(\Theta) = 1$, showing $\Pr_q(T) \geq 2$. In particular, $T$ does not embed into $\L^\times$, so $\tau = C_T\o(V_2)$ is infinite. Since $\tau \cap \Theta = 1$, one finds $T = \Theta \times \tau$. Finally let $v_2 \in V_2 \setminus\{0\}$ and $K = C_G\o(v_2) \geq \tau$. If $K$ is non-soluble, then it is a bad group of rank $3$, a contradiction since $\tau$ has involutions. So $K$ is soluble and by the structure of Borel subgoups, $K \leq T$. Since $\Theta$ acts freely on $V_2$, $K = \tau$ has corank $\leq 3$, and $G$ has rank $\leq 4$. By Wiscons' analysis, $F\o(G) \neq 1$: against Proposition \ref{p:reductions}.
\end{proofclaim}

\begin{step}
If $T$ centralises $V_1$, then we are done.
\end{step}
\begin{proofclaim}
Now suppose instead that $T$ centralises $V_1$. Observe how $C_V(T) = V_1$ and $N_G\o(V_1) = T$ by solubility of the former and maximality of the latter as a definable, connected, soluble group; in particular $N_G(T) = N_G(V_1)$. If $V_1$ is not TI, then there are $g \notin N_G(V_1)$ and $v_1 \in V_1\cap V_1^g \setminus\{0\}$; now $K = C_G\o(v_1) \geq \<T, T^g\>$ is non-soluble and therefore a bad group of rank $3$, a contradiction to $\rk T = 2$. Hence $V_1$ is TI, proving $\cork T \leq 2$ and $\rk G \leq 4$. Finish like in Step \ref{p:Pr2>0:mixedBorel:st:V1Ttrivial}.
\end{proofclaim}

We have proved the main statement; it remains to study the length of $V$ as a $T$-module.

\begin{step}\label{p:Pr2>0:mixedBorel:st:length}
Consequence: $V$ is not $T$-minimal.
\end{step}
\begin{proofclaim}
Suppose it is. Then by Zilber's Field Theorem there is a field structure $\L$ with $V \simeq \L_+$ and $T\leq \L^\times$. Let $\beta = Y \rtimes \Theta$ be a Borel subgroup of mixed structure, and consider $W = C_V\o(Y) \neq 0$. Then $\Theta$ normalises $W$ and $V/W$, and one of them, say $X_1$, has rank $1$. By freeness of toral elements and Zilber's Field Theorem, there is a definable field structure $\K$ with $X_1 \simeq \K_+$ and $\Theta \simeq \K^\times$. Hence $\K^\times \simeq \Theta \leq T \leq \L^\times$, and $T/\Theta$ is torsion-free. Now Wagner's Torus Theorem forces $T = \Theta$: so $V$ is not $T$-minimal.
\end{proofclaim}

The proposition is proved.
\end{proof}

Our Borel subgroup $\beta$ will play a key role in Step \ref{p:Pr2=1:PSL2:st:T} of Proposition \ref{p:Pr2=1:PSL2} below.
On the other hand, we still do not know whether there is a non-nilpotent Borel subgroup. The obstacle lies in the possibility for $G$ to contain a ``bad unipotent centraliser'', we mean a bad group $K = C_G\o(v_0)$ of rank $3$ with unipotent type, in Step \ref{p:Pr2>0:mixedBorel:st:Theta} of Proposition \ref{p:Pr2>0:mixedBorel} above.
The spectre of bad groups will be haunting the Pr\"{u}fer rank $1$ analysis hereafter (and notably Proposition \ref{p:Pr2=1:PSL2}), but we are done with pathologically tight configurations.

\subsection{The Pr\"{u}fer Rank 1 Analysis}\label{s:Pr2=1}

This section is devoted to the adjoint representation of $\PSL_2(\K)$ (Proposition \ref{p:Pr2=1:PSL2}); with an early interest in \S\ref{s:Pr2=2} we shall do slightly more (Proposition \ref{p:LORO:Pr2=1}).

\subsubsection*{More Material}

The classification of $N_\circ^\circ$-groups will be heavily used throughout this section, except in Proposition \ref{p:Pr2=1:LORO} where uniqueness principles will nonetheless give the \emph{coup de gr\^{a}ce}.

\renewcommand{\theoremname}{$N_\circ^\circ$ Analysis}
\begin{generictheorem*}[{from \cite{DJInvolutive}}]
Let $G$ be a connected, non-soluble, $N_\circ^\circ$-group of finite Morley rank of odd type and suppose $G \not \simeq \PSL_2(\K)$.
Then the Sylow $2$-subgroup of $G$ is isomorphic to that of $\PSL_2(\C)$, is isomorphic to that of $\SL_2(\C)$, or is a $2$-torus of Pr\"{u}fer $2$-rank at most $2$.

Suppose in addition that for all $i \in G$, $C_G\o(i)$ is soluble.
Then involutions are conjugate and for $i \in I(G)$, $C_G\o(i)$ is a Borel subgroup of $G$. If $i \neq j$ in $I(G)$, then $C_G\o(i) \neq C_G\o(j)$.
\end{generictheorem*}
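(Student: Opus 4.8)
The plan is to treat the two halves of the statement separately, the first being the genuinely structural one. For the classification of the Sylow $2$-subgroup I would run the dichotomy on which all $N_\circ^\circ$-analysis rests: either every involution of $G$ has soluble connected centraliser, or some involution $i$ has $C_G\o(i)$ non-soluble. In the second case $C_G\o(i)$ is again a connected $N_\circ^\circ$-group --- the property passes to definable connected subgroups --- of strictly smaller Morley rank, so by induction, appealing as needed to the rank $2$ analysis and to Hrushovski's Theorem, it contains a component $L \simeq \SL_2(\K)$ or $\PSL_2(\K)$; torality principles then place $i$ in a maximal $2$-torus of $G$, and pushing up from $L$ --- using the known behaviour of groups with a $\SL_2$- or $\PSL_2$-component --- forces the Sylow $2$-subgroup of $G$ to be of $\PSL_2(\C)$- or $\SL_2(\C)$-type. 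In the first case --- all involution centralisers soluble --- one has to show that the Sylow is a $2$-torus (so $S = S\o$) of Pr\"{u}fer $2$-rank at most $2$: torality gives that involutions are toral, so they sit in a maximal good torus $T$, and a $2$-torus of Pr\"{u}fer rank $\geq 3$ inside $T$ would, by a genericity argument combined with uniqueness principles, yield either a non-soluble toral involution centraliser or two distinct Borel subgroups sharing a non-trivial unipotent subgroup --- both impossible. This last point is where the real work lies: it needs the full strength of uniqueness principles together with a careful analysis of how the finite group $N_G(T)/C_G(T)$ can act on $S\o \leq T$, ruling out every larger or non-connected Sylow.

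For the second half one assumes in addition that $C_G\o(i)$ is soluble for all $i$, so by the first half the Sylow is a $2$-torus of Pr\"{u}fer rank $\leq 2$. \emph{Conjugacy of involutions}: by torality and conjugacy of Sylow $2$-subgroups, every involution lies in a conjugate of $S\o$, and any $G$-conjugacy between two involutions of $S\o$ can be realised inside $N_G(S\o)$ (conjugate back, using that the connected centraliser of an involution contains $S\o$ as a maximal $2$-torus), so it suffices that the finite group $N_G(S\o)/C_G\o(S\o)$ act transitively on the at most three involutions of $S\o$ --- which the non-solubility of $G$ and the soluble-centraliser hypothesis force. \emph{$C := C_G\o(i)$ is a Borel}: $C$ is connected soluble, so take a Borel $B \supseteq C$; since $i$ is toral, $i \in B$, and $C_B\o(i) = C$ because $C \leq B$; if $C$ were proper in $B$ then the toral involution $i$ would act on the connected soluble $B$ with connected fixed subgroup $C \lneq B$, hence centralise its torus part but invert a non-trivial piece of its unipotent radical, and combining this with uniqueness principles for that radical and with $G \not\simeq \PSL_2(\K)$ yields a contradiction; so $C = B$. \emph{Distinct involutions have distinct connected centralisers}: if $C_G\o(i) = C_G\o(j) =: B$ with $i \neq j$ then $\langle i, j\rangle$ centralises $B = C_G\o(\langle i, j\rangle)$, so the non-trivial dihedral part of $\langle i, j\rangle$ lies in $C_G(B)$; but $B$ being a Borel, $N_G\o(B) = B$, whence $C_G(B)$ is a finite extension of $Z\o(B) \leq B$, and given the Sylow structure just established its only involution is $i$ --- forcing $j = i$, a contradiction.

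So essentially all the difficulty concentrates in the all-soluble-centraliser case of the first half --- the Pr\"{u}fer bound and the exact shape of the Sylow. The second half is comparatively soft once that structure is in hand; the one slightly delicate step there is the exclusion of a strictly larger Borel over $C_G\o(i)$, which is precisely where having already reduced to a $2$-torus Sylow --- so that $i$ lies in the maximal torus of every Borel containing it --- makes the argument close.
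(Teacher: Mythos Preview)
The paper does not prove this statement: it is quoted as a prerequisite from \cite{DJInvolutive}, and no argument is given. The authors even remark, immediately after stating it, that ``one could imagine a more direct proof, reproving the necessary chunks of \cite{DJInvolutive} in the current, particularly nice context'', but they do not carry this out. So there is no proof in the paper to compare your proposal against.

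That said, your sketch has a conceptual mismatch worth flagging. In the non-soluble-centraliser branch you appeal to ``the rank $2$ analysis and Hrushovski's Theorem'' to force a $\pSL_2$ component inside $C_G\o(i)$. But the $N_\circ^\circ$ Analysis is a statement about the abstract internal structure of $G$: there is no module $V$ and no strongly minimal $G$-set available, so neither of those tools applies. The actual proof in \cite{DJInvolutive} is long and uses genuinely different machinery --- Burdges' unipotence theory, a fine analysis of intersections of Borel subgroups, strongly embedded configurations, and Weyl-group control --- and does not reduce to an induction on rank plus the module-theoretic results of the present paper. Your Pr\"{u}fer-rank bound in the all-soluble case is also too vague as stated: ``a genericity argument combined with uniqueness principles'' is a placeholder for what in \cite{DJInvolutive} is a substantial case analysis.

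Your second half is closer in spirit to what one extracts from \cite{DJInvolutive}, and the broad shape (torality, Weyl-group transitivity, then Borel-ness and distinctness) is right. The one soft spot is the step ``$C_G\o(i) \lneq B$ leads to a contradiction via uniqueness principles'': this hides real work, since one must first know enough about how $i$ acts on $U_p(B)$ and how Borels containing a given unipotent piece are related, which again is part of the content of \cite{DJInvolutive} rather than something available for free.
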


Of course one could imagine a more direct proof, reproving the necessary chunks of \cite{DJInvolutive} in the current, particularly nice context where the structure of soluble groups is very well understood.

\subsubsection{$N_\circ^\circ$-ness and Bounds}

We start with a proposition that will be used only in higher Pr\"{u}fer rank (\S\ref{s:Pr2=2}).

\begin{proposition}\label{p:LORO:Pr2=1}
If $G$ is an $N_\circ^\circ$-group then $\Pr_2(G) = 1$.
\end{proposition}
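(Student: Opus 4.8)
The plan is to argue by contradiction: suppose $G$ is an $N_\circ^\circ$-group with $\Pr_2(G) \geq 2$. Since by Proposition \ref{p:reductions} the module $V$ has odd prime exponent $p$, the acting group $G$ has odd type (no $p$-unipotent subgroup of automorphisms forces even type — recall $p \neq 2$), so the Sylow $2$-subgroup has $S\o = T_2$ a $2$-torus, which here we are assuming of Pr\"ufer rank at least $2$. The maximal good torus $T$ then has $\Pr_2(T) \geq 2$, so by the first trivial generality (Lemma A, applied with $q = p$, noting $U_p(V) = 1$) we get
\[
\rk T \leq \rk V + \Pr\nolimits_2(T) - \ell_T(V) = 3 + \Pr\nolimits_2(T) - \ell_T(V).
\]
The immediate goal is to pin down $\ell_T(V)$ and $\Pr_2(T)$ tightly enough to reach a contradiction with the structural constraints coming from the $N_\circ^\circ$ Analysis.

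First I would establish that $V$ is not $T$-minimal, hence $\ell_T(V) \geq 2$. Indeed if $V$ were $T$-minimal, Zilber's Field Theorem gives a field $\L$ with $V \simeq \L_+$, $T \hookrightarrow \L^\times$, forcing $\Pr_2(T) \leq \Pr_2(\L^\times) \leq 1$ (as $\L$ has characteristic $p \neq 2$), contradicting our assumption. So $\ell_T(V) \geq 2$, and then Lemma A gives $\rk T \leq 3 + \Pr_2(T) - 2 = \Pr_2(T) + 1$. On the other hand I would want a lower bound on $\rk T$ in terms of $\Pr_2(T)$. The natural move is to decompose $V$ along a rank-$1$ subtorus: taking a $T$-composition factor and using Semi-Simple Actions repeatedly, $V = C_V(T) \oplus \bigoplus (\text{rank-}1\text{ factors})$ as $T$-modules since $T$ is a good torus acting on an elementary abelian $p$-group with no $p$-torsion; each non-central rank-$1$ factor contributes (via Zilber's Field Theorem) an image of $T$ into some $\K_i^\times$, and $\Pr_2$ of each such image is at most $1$. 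Since $\Pr_2(T) \geq 2$ and $\rk V = 3$, the only way to accommodate two independent $2$-Pr\"ufer directions is to have at least two non-central rank-$1$ factors, so $C_V(T)$ has rank at most $1$ and $\ell_T(V) \in \{2,3\}$ with $\Pr_2(T) = 2$ and $\rk T = 2$ or $3$.

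Next I would invoke the $N_\circ^\circ$ Analysis directly: since $G \not\simeq \PSL_2(\K)$ would follow from $\Pr_2(G) \geq 2$ (the Sylow $2$-subgroup of $\PSL_2(\K)$ has Pr\"ufer rank $1$), that theorem says $S$ is a $2$-torus of Pr\"ufer $2$-rank \emph{at most $2$}, so in fact $\Pr_2(G) = 2$ exactly, and moreover $S\o = S = T_2$ is exactly the $2$-torus part of $T$. Now I want to exploit that a group with a $2$-torus of Pr\"ufer rank $2$ acting faithfully on a rank $3$ module is very constrained. The cleanest route: pick two commuting involutions $i, j \in T_2$ (possible since $\Pr_2 = 2$); their centralisers $C_G\o(i), C_G\o(j)$ are reducible — each centralises the non-trivial fixed space $C_V\o(i) \neq 0$ — hence proper, and by the rank $2$ analysis together with induction they are either soluble or simple bad of rank $3$; but a bad group of rank $3$ has no involution, whereas $j \in C_G\o(i)$, so $C_G\o(i)$ is soluble, and likewise $C_G\o(j)$. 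So we are in the "in addition" hypothesis of the $N_\circ^\circ$ Analysis, giving that $C_G\o(i)$ is a \emph{Borel} subgroup. By Proposition \ref{p:reductions} this Borel has the form $Y \rtimes \Theta$ with $Y$ $p$-unipotent and $\Theta$ a good torus; since $j \in \Theta$ is a $2$-element and $T$ is a maximal good torus with $T_2 \leq \Theta$ after conjugacy, we get $T \leq C_G\o(i)$, and in fact, running the same argument with the roles exchanged, $C_G\o(i) = C_G\o(j)$ would follow if both equal $N_G\o(T)$-type objects — but the $N_\circ^\circ$ Analysis explicitly states $C_G\o(i) \neq C_G\o(j)$ for distinct involutions, so I need to be careful here. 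The contradiction I aim for is: $T_2 \leq Z(S)$ and both involutions are central in the Sylow, forcing (via torality and conjugacy of Sylow $2$-subgroups) $C_G\o(i)$ and $C_G\o(j)$ to both contain $T$ as a maximal good torus, then analysing how $T_2$ sits inside the unipotent-times-torus structure to show the two Borels coincide, against $C_G\o(i) \neq C_G\o(j)$.

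The main obstacle is precisely this last step: squeezing a genuine contradiction out of the interaction between the two involution centralisers. The clean statement $C_G\o(i) \neq C_G\o(j)$ from the $N_\circ^\circ$ Analysis must be played against the tight rank bounds ($\rk T \leq \Pr_2(T) + 1 = 3$, $\rk V = 3$) to show there simply isn't enough room — e.g. both $C_G\o(i)$ and $C_G\o(j)$ must be Borel subgroups containing the \emph{same} maximal good torus $T$ (as $T_2$ is central in $S$ and Sylow $2$-subgroups are conjugate, every involution of $T_2$ is contained in a conjugate of $T$, but maximality and $\rk T = 2$ pin it down), and then a direct structural analysis of $Y \rtimes \Theta$ with $\Theta \supseteq T_2$ forces $Y = 1$, making the Borel equal to $T$ for \emph{both} $i$ and $j$ — contradiction. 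Alternatively, if $Y \neq 1$, then $C_V\o(Y)$ is a proper non-zero $T$-submodule on which $T_2$ still acts with Pr\"ufer rank $2$, which by Lemma A applied to $C_V\o(Y)$ (rank $\leq 2$) gives $\rk T \leq 2 + 2 - \ell_T(C_V\o(Y))$, pushing $\ell_T$ down and re-deriving $V$ nearly $T$-minimal on a rank $\leq 2$ piece — again colliding with $\Pr_2 = 2$ and characteristic $p \neq 2$. Either branch terminates; the delicate point is organising the case split on $Y$ cleanly and invoking the non-equality of the two centralisers at exactly the right moment.
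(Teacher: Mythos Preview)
There is a genuine gap, and it lies at two points.

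First, your claim that a reducible non-soluble $C_G\o(i)$ is ``either soluble or simple bad of rank $3$'' is wrong: induction (on the main theorem) only applies to \emph{irreducible} subgroups, whereas the rank $2$ analysis applied to a reducible non-soluble group produces an $\SL_2(\K)$ or $\GL_2(\K)$ subquotient, not a bad group. The paper fixes this by first drawing from the $N_\circ^\circ$ analysis that $S \simeq \Z_{2^\infty}^2$ is \emph{connected}, hence $G$ has no subquotient isomorphic to $\SL_2(\K)$, and therefore every definable, connected, reducible subgroup is soluble. You never establish this key fact.

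Second, and more seriously, you never address the possibility that one of the three involutions of $S\o$---say $k = ij$---is \emph{central} in $G$. In that case $k$ inverts $V$, so $C_V(k) = 0$ and $C_G\o(k) = G$ is not soluble: the ``in addition'' hypothesis of the $N_\circ^\circ$ analysis fails and your appeal to $C_G\o(i) \neq C_G\o(j)$ is illegitimate. The paper devotes its entire Step~\ref{p:LORO:Pr2=1:st:allmeek} to ruling this out, via a descent argument (restricting to centralisers of non-meek $2$-elements until all $2$-elements are meek or central) followed by passing to the quotient $\overline{G} = G/\langle\alpha^2\rangle$ and using conjugacy of involutions there to derive a contradiction on orders. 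This is the technical heart of the proof and your sketch does not even acknowledge the issue.

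Finally, your endgame is murky. The branch ``$T_2$ still acts with Pr\"ufer rank $2$ on $C_V\o(Y)$, apply Lemma~A'' is ill-posed: Lemma~A requires faithfulness of $T$ on the module, which fails on $C_V\o(Y)$. The paper's contradiction is much cleaner: once all involutions are meek, the $N_\circ^\circ$ analysis makes $B_i = C_G\o(i)$ a non-nilpotent Borel, so $U_i = U_p(B_i) \neq 1$; since $i$ is non-central, $U_i$ normalises both eigenspaces and $\rk C_V\o(U_i) = 2$; then for any $g$, $\langle U_i, U_i^g\rangle$ is reducible, hence soluble by the ``reducible implies soluble'' fact, so uniqueness principles force $U_i \trianglelefteq G$, contradicting Proposition~\ref{p:reductions}.
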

\begin{proof}
Suppose the Pr\"{u}fer rank is $\geq 2$.
By the $N_\circ^\circ$ analysis, the Sylow $2$-subgroup of $G$ is isomorphic to $\Z_{2^\infty}^2$.
In particular, since the Sylow $2$-subgroup of $G$ is connected, $G$ has no subquotient isomorphic to $\SL_2(\K)$ (see \cite[Lemma L]{DJInvolutive} if necessary): by the rank $2$ analysis, every definable, connected, reducible subgroup is soluble.

\begin{notationinproof*}
Let $\{i, j, k\}$ be the involutions in $S = S\o$.
\end{notationinproof*}


\begin{step}\label{p:LORO:Pr2=1:st:allmeek}
For $\ell = i, j, k$, $C_G\o(\ell)$ is soluble.
\end{step}
\begin{proofclaim}
Call a $2$-element $\zeta \in G$ \emph{meek} if $C_G\o(\zeta)$ is soluble; a systematic study of meek elements will be carried in the Pr\"{u}fer rank $2$ analysis (Proposition \ref{p:Pr2=2:classical}). Suppose there is a non-meek involution $i$.

Bear in mind that for any $2$-element $\zeta \in G$, $C_G\o(\zeta)$ has Pr\"{u}fer rank $2$ by torality principles. So restricting ourselves to connected centralisers of $2$-elements whenever they are non-soluble and proper, we descend to a definable, connected, non-soluble subgroup $H \leq G$ with $\Pr_2(H) = 2$ and such that every $2$-element in $H$ is either meek or central in $H$.
Since we are after a contradiction and $H$ remains irreducible on $V$ by non-solubility, we may suppose $G = H$. Since $i$ is now central in $G$, it inverts $V$; so $j$ and $k$ are therefore meek.


Since $G$ is a non-soluble $N_\circ^\circ$-group, one has $Z\o(G) = 1$: there are finitely many non-meek elements in $S$. Take one of maximal order and $\alpha \in S$ be a square root. Notice that $\alpha^2 \neq 1$ since $i$ is not meek. By construction $\alpha^2$ is central in $G$, and any element of the same order as $\alpha$ is meek: this applies to $j\alpha$ since $\alpha^2 \neq 1$.
Let us factor out $\<\alpha^2\>$ (possibly losing the action on $V$); let $\overline{G} = G/\<\alpha^2\>$ and denote the projection map by $\pi$. Observe that by Zilber's Indecomposibility Theorem and finiteness of $\<\alpha^2\>$, one has for any $g \in G$:
\[\pi^{-1}\left(C_{\overline{G}}\o(\overline{g})\right)\o = C_G\o(g)\]
In particular $\overline{j} = \overline{k}$ remains a meek involution of $\overline{G}$, and $\overline{\alpha} \neq \overline{j\alpha}$ have become meek involutions as well. So in $\overline{G}$, all involutions are meek.

By the $N_\circ^\circ$ analysis, $\overline{\alpha}$ is then $\overline{G}$-conjugate to $\overline{j}$ by some $\overline{w}$. Lifting to an element $w \in G$, one sees that $j^w = \alpha z$ for some $z\in \<\alpha^2\>$. Now $\alpha^2 = z^{-2}$ and this proves that $\alpha$ actually has order $2$: a contradiction.
\end{proofclaim}

\begin{step}
Contradiction.
\end{step}
\begin{proofclaim}
By the $N_\circ^\circ$ analysis, $B_i = C_G\o(i)$ is a non-nilpotent Borel subgroup. So $B_i$ contains some non-trivial $p$-unipotent subgroup $U_i = U_p(B_i)$. The involution $i \notin Z(G)$ centralises $U_i$, so $U_i$ normalises $V^{+_i}$ and $[V, i]$ where both are non-trivial, and this forces $\rk C_V\o(U_i) = 2$. Now for arbitrary $g$, $\<U_i, U_i^g\>$ is reducible, hence soluble: by uniqueness principles $U_i = U_i^g$, a contradiction.
\end{proofclaim}

This completes the proof.
\end{proof}

Let us repeat that Proposition \ref{p:LORO:Pr2=1} will be used only in the Pr\"{u}fer rank $2$ analysis, \S\ref{s:Pr2=2}.

\subsubsection{Bounds and $N_\circ^\circ$-ness}

The next proposition is a converse to Proposition \ref{p:LORO:Pr2=1} and the real starting point of the Pr\"{u}fer rank $1$ analysis. Notice that it does \emph{not} use the $N_\circ^\circ$ analysis, though uniqueness principles add the final touch.

\begin{proposition}\label{p:Pr2=1:LORO}
Suppose that $\Pr_2(G) = 1$. Then any definable, connected, reducible subgroup is soluble; in particular $G$ is an $N_\circ^\circ$-group.
\end{proposition}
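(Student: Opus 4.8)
The plan is to show that $G$, with $\Pr_2(G)=1$, cannot contain a definable, connected, reducible, \emph{non-soluble} subgroup; $N_\circ^\circ$-ness then follows formally, since the connected normaliser $N_G^\circ(A)$ of any infinite definable connected abelian $A$ acts on the non-trivial (by $G$-minimality considerations, or because $A\leq C_G(A)$ gives a proper invariant structure) and is therefore reducible, hence soluble. So the heart of the matter is the non-soluble reducible case.

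\medskip

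First I would suppose for contradiction that $H\leq G$ is definable, connected, non-soluble and reducible on $V$. By the Rank $2$ Analysis, the only non-soluble configuration on a module of rank $2$ is $\GL_2(\K)$ or $\SL_2(\K)$ acting naturally; and on a module of rank $1$ nothing non-soluble survives (Hrushovski's Theorem forces $\PSL_2(\K)$, which is not reducible). So a reducible non-soluble $H$ must have a $G$-composition factor of rank $2$ on which it induces (after quotienting by the kernel) a copy of $\pSL_2(\K)$; in particular $H$ has a section isomorphic to $\SL_2(\K)$, hence a subquotient $\SL_2(\K)$. But $\Pr_2(G)=1$ is compatible with that: the obstruction must come from elsewhere. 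The clean way is to run the induction: apply the statement of the main theorem inductively to $H$ acting on $V$. Since $H$ is reducible, it is not $G$-minimal on $V$, so the inductive classification of the main theorem does not directly apply to $H$ on $V$; instead one looks at the $H$-composition series $0=V_0<V_1<\dots<V_\ell=V$ and applies the Rank $2$ (and Rank $1$) analyses factor by factor, together with the Rank $3k$ Analysis when a factor has rank $\leq 2$ but $H/C_H(\text{factor})$ involves $\SL_2$.

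\medskip

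The decisive step is to produce, inside such an $H$, a \emph{maximal $p$-unipotent subgroup $U$ whose centraliser on $V$ has rank $2$}, and then to derive a contradiction exactly as in Step~2 (``Contradiction'') of Proposition~\ref{p:LORO:Pr2=1} and in Proposition~\ref{p:Pr2=0}: for arbitrary $g$, the group $\langle U, U^g\rangle$ is reducible (it normalises the rank-$2$ subgroup $C_V^\circ(U)$ — or rather some rank-$2$ piece built from it), hence soluble by what we are proving for smaller configurations, whence $U=U^g$ by uniqueness principles in $N_\circ^\circ$-groups; but a normal $p$-unipotent subgroup in a connected $G$ acting faithfully and $G$-minimally on $V$ contradicts $R^\circ(G)=1$ (Proposition~\ref{p:reductions}). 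To get such a $U$: since $H$ has a subquotient $\SL_2(\K)$ realised on a rank-$2$ composition factor, a root unipotent subgroup $U_0$ of that $\SL_2$ has $\rk[V_j/V_{j-1},U_0]=1$ on the relevant factor; torsion/connectedness arguments (Zilber's Indecomposibility Theorem, plus the fact that $p$-unipotent elements act unipotently so the fixed space meets every composition factor) then force $\rk C_V^\circ(U_0)\geq 2$. One must be slightly careful that $U_0$ need not be \emph{maximal} $p$-unipotent in $G$; but replacing $U_0$ by a maximal $p$-unipotent $U\geq U_0$ of the relevant Borel only enlarges the relevant fixed space, and $C_V^\circ(U)$ still has rank $\geq 1$ with an invariant complement, so one still gets a rank-$2$ invariant subgroup on which $\langle U,U^g\rangle$ acts reducibly.

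\medskip

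The main obstacle, as the authors themselves flag, is the ubiquity of bad groups: when I try to argue that a reducible non-soluble $H$ is impossible, I cannot simply invoke the inductive main theorem on $H$ because $H$ may well be irreducible on some proper $H$-submodule and the inductive hypothesis only classifies the $G$-minimal situation — and one of the outcomes there is ``simple bad group of rank $3$''. So the real work is to show that a simple bad group of rank $3$ cannot sit reducibly inside our $G$ with $\Pr_2(G)=1$: a bad group has \emph{no} involutions and no $p$-unipotent elements playing the role above is automatic, but a bad group also has no definable involutive automorphism, whereas $\Pr_2(G)=1$ supplies an involution $i\in G$; analysing the action of $i$ on such an $H$ (using the Semi-Simple Actions fact and the decomposition $V=C_V(i)\oplus[V,i]$, noting $\rk C_V(i)$ and $\rk[V,i]$ are $1$ and $2$ in some order by $G$-minimality and the structure of the Sylow $2$-subgroup) should force $i$ to normalise $H$ and induce an automorphism on it, contradicting the Bad Group Analysis — unless $i$ centralises $H$, in which case $H\leq C_G^\circ(i)$, a proper subgroup, and one recurses on rank. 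I expect the bulk of the writing to be in carefully organising this recursion so that it terminates, and in handling the borderline ranks where Wiscons' Analysis ($F^\circ(G)\neq 1$ for rank-$4$ groups with involutions, again contradicting Proposition~\ref{p:reductions}) is needed to close the last gap.
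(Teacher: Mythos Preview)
There is a genuine circularity at the heart of your ``decisive step''. You want to argue that $\langle U, U^g\rangle$ is reducible, \emph{hence soluble}, and then invoke uniqueness principles to get $U=U^g$. But ``reducible $\Rightarrow$ soluble'' is exactly the statement being proved, and there is no induction making $\langle U,U^g\rangle$ smaller than the original $H$: both are just definable, connected, reducible subgroups of the same $G$. You cite Step~2 of Proposition~\ref{p:LORO:Pr2=1} as the model, but that proposition \emph{assumes} $G$ is an $N_\circ^\circ$-group; here you are trying to \emph{prove} it. For the same reason, uniqueness principles are not available to you: they require $N_\circ^\circ$-ness. (A smaller error: enlarging $U_0$ to a maximal $p$-unipotent $U$ \emph{shrinks} $C_V^\circ(U)$, not enlarges it; and once $\rk C_V^\circ(U)=1$ the whole mechanism collapses.) The bad-group discussion is also misplaced: a reducible non-soluble $H$ already has an $\SL_2$ subquotient by the Rank~$2$ Analysis, so it is never bad; bad groups enter the paper's argument elsewhere, in the analysis of point stabilisers.

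The paper's proof takes a completely different route. It first pins down the structure of any reducible non-soluble $H$ precisely: $H=U\rtimes C$ with $C\simeq\SL_2(\K)$, the central involution $i\in C$ inverting the $p$-unipotent group $U$, and $\rk H\neq 4,6$; moreover, it records exactly how $H$ acts on $V^{+_i}$ and $V^{-_i}$. Once one such $C\simeq\SL_2(\K)$ exists inside $G$, the argument analyses the point stabilisers $H_+=C_G^\circ(v_+)$ and $H_-=C_G^\circ(v_-)$ for $v_\pm\in V^{\pm_i}\setminus\{0\}$: Hrushovski's Theorem forces corank $\geq 2$; $H_+$ again has the $U_+\rtimes C$ shape, while $H_-$ is shown to be $p$-unipotent (no involutions). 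Intersection/covering estimates with generic conjugates then yield $\rk G\leq 6$, and a TI argument on $V^{+_i}$ brings the rank down to $5$; only at this final stage, with so much structure in hand, can one check $N_\circ^\circ$-ness ``in the current setting'' and invoke uniqueness principles to finish. In short, the key idea you are missing is to study centralisers of vectors in the $\pm$-eigenspaces of the $\SL_2$-involution and bound $\rk G$ from above, rather than trying to force a normal unipotent subgroup directly.
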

\begin{proof}

\begin{step}\label{p:Pr2=1:LORO:st:H}
Any definable, connected, reducible, non-soluble subgroup $H \leq G$ has the form $H = U \rtimes C$, where $C \simeq \SL_2(\K)$ and the central involution $i \in C$ inverts the $p$-unipotent group $U$; $\rk H \neq 4, 6$.
Moreover if $H$ has a rank $1$ submodule $V_1$ then $V_1 = V^{+_i} = C_V\o(H)$; if $H$ has a rank $2$ submodule $V_2$ then $U$ centralises $V_2 = V^{-_i}$.
\end{step}
\begin{proofclaim}
By non-solubility of $H$ the length of $V$ as an $H$-module is $\ell_H(V) = 2$; the argument, if necessary, is as follows. Suppose $\ell_H(V) = 3$. Then all factors in a composition series are minimal, so by \cite[Proposition 3.12]{PStable} for instance, $H'$ centralises them all. Then $H$ is clearly soluble: a contradiction.

So there is an $H$-composition series $0 < W < V$; let $X_1$ be the rank $1$ factor and $X_2$ likewise; set $U = C_H(X_2)$. Then by the rank $2$ analysis, $H/U \simeq \SL_2(\K)$. Before proceeding we need to handle connectedness of $U$: it follows from the non-existence of perfect central extensions of $\SL_2(\K)$ \cite[Theorem 1]{ACCentral} by considering the isomorphisms $(H/U\o)/(U/U\o) \simeq H/U \simeq \SL_2(\K)$. Hence $U = U\o$.

As a consequence of Zilber's Field Theorem, $U$ which has no involutions must centralise $X_1$: $[V, U, U] = 1$ so $U$ is abelian. Moreover, for $u \in U$ and $v \in V$ there is $w \in W$ with $v^u = v + w$. It follows $v^{u^p} = v + pw = v$ and $U$ has exponent $p$.

Now let $i \in H$ be a $2$-element lifting the central involution in $\SL_2(\K)$: since $U$ has no involutions, $i$ is a genuine involution in $H$. Since both $U$ and $(i \mod U) \in H/U \simeq \SL_2(\K)$ centralise $X_1$, $i$ centralises $X_1$; whereas since $U$ centralises $X_2$ and $(i \mod U)$ inverts it, $i$ inverts $X_2$. We then find a decomposition $V = V^+ \oplus V^-$ under the action of $i$, with $\rk V^+ = 1$ and $\rk V^- = 2$.
\begin{itemize}
\item
If $W = X_1 \leq V$ then $U$, $H/U$, and therefore $H$ as well centralise $W$ so $V^+ = W$. For $u \in U$ and $v_- \in V^-$, there is $v_+ \in V^+$ with $v_-^u = v_- + v_+$, so $v_-^{ui} = -v_- + v_+$ and $v_-^{uiui} = v_- - v_+ + v_+ = v_-$: $uiui$ centralises $V_+ + V_- = V$. (Incidently, in this case, $H$ centralises $W$; by non-solubility of $H$, $\rk C_V\o(H) = 1$.)
\item
If $W = X_2 \leq V$ then $U$ centralises $W$ and $i$ inverts $W = V^-$. For $u \in U$ and $v_+ \in V^+$, there is $v_- \in V^-$ with $v_+^u = v_+ + v_-$, so $v_+^{ui} = v_+ - v_-$ and $v_+^{uiui} = v_+$, so $uiui$ centralises $V_+ + V_- = V$. (Incidently, in this case, $U$ centralises $W$.)
\end{itemize}
In either case, $i$ inverts $U$. All involutions of $H$ are equal modulo $U$ and $i$ inverts the $2$-divisible group $U$, so for $h \in H$, $i^h \in i U = i^U$ and $H = U \cdot C_H(i) = U \rtimes C_H(i)$. Clearly $C_H(i) = C_H\o(i) \simeq \SL_2(\K)$. Of course $\rk \K = 1$; if $\rk H \leq 6$ then by the rank $3k$ analysis and since $i$ inverts $U$, $\rk U$ must be $0$ or $2$, proving $\rk H \neq 4, 6$.
\end{proofclaim}

We start a contradiction proof. Suppose that $G$ contains a definable, connected, reducible, non-soluble group: by Step \ref{p:Pr2=1:LORO:st:H}, $G$ contains a subgroup $C \simeq \SL_2(\K)$.

\begin{notationinproof*}
Let $C \leq G$ be isomorphic to $\SL_2(\K)$ and $i \in C$ be the central involution.
\end{notationinproof*}

Before we start more serious arguments, notice that a Sylow $2$-subgroup of $C$ is one of $G$; notice further that $\ell_T(V) = 3$, so that $\rk T = 1$. Finally, by the rank $3k$ analysis, $C$ centralises $V^{+_i}$ which has rank $1$.

\begin{notationinproof*}
Let $v_+ \in V^{+_i}\setminus\{0\}$ and $v_- \in V^{-_i}\setminus\{0\}$. Set $H_+ = C_G\o(v_+)$ and $H_- = C_G\o(v_-)$.
\end{notationinproof*}

\begin{step}\label{p:Pr2=1:LORO:st:H+H-}
Both $H_+$ and $H_-$ have corank $2$ or $3$ but not both have corank $3$. Moreover, $H_+ \simeq U_+ \rtimes C$ where $U_+$ is a $p$-unipotent group inverted by $i$ and $\rk H_+ \neq 4, 6$; whereas $H_-$ is a $p$-unipotent group.
\end{step}
\begin{proofclaim}
Remember that for any $v \in V\setminus\{0\}$ one has $\bigcap_{g \in G} C_G\o(v)^g \leq C_G(\<v^G\>) = C_G(V) = 0$. So if $H_+$ or $H_-$ has corank $1$, then by Hrushovski's Theorem $G\simeq \PSL_2(\L)$, a contradiction to $G$ containing $\SL_2(\K)$. Therefore the coranks are $2$ or $3$.

Since $C$ centralises $V^{+_i}$, one has $H_+ \geq C$; by induction $H_+$ may not be irreducible, and Step \ref{p:Pr2=1:LORO:st:H} yields the desired form.

On the other hand, we claim that $H_-$ has no involutions. For if it does, say $j \in H_-$, since $i$ normalises $H_-$ and by a Frattini argument (see \cite[Lemma B]{DJInvolutive} if necessary) we may assume $[i, j] = 1$; then by the structure of the Sylow $2$-subgroup of $G$, $i = j \in H_-$ and $i$ centralises $v_-$: a contradiction.
At this point it is already clear that $v_+$ and $v_-$ may not be conjugate under $G$, and in particular that $H_+$ and $H_-$ cannot simultaneously have corank $3$ in $G$.

We push the analysis further. Suppose that $H_-$ is non-soluble. As it has no involutions, it must be irreducible by the rank $2$ analysis; by induction $H_-$ is a bad group of rank $3$, and $\rk G \leq 6$. If $\rk G = 6$ then $\rk H_+ = 4$, a contradiction. Hence $\rk G \leq 5$; on the other hand $i$ centralises $H_-$ by the bad group analysis, but $i$ is not central in $G$ since it does not invert $V$: therefore $G > C_G\o(i) \geq \<H_-, C\>$ and $\rk(H_-\cap C)\o \geq 2$, a contradiction to the structure of $H_-$. So $H_-$ is soluble. Since it has no involutions and $\rk(T) = 1$, $H_-$ is a $p$-unipotent group.
\end{proofclaim}

\begin{step}\label{p:Pr2=1:LORO:st:cork}
$\rk G \leq 6$.
\end{step}
\begin{proofclaim}
Let $x, y \in G$ be independent generic elements.

If $H_-$ centralises a rank $2$ module $V_2 \leq V$ then $(H_-\cap H_-^x)\o$ centralises $V_2 + V_2^x = V$, so $(H_-\cap H_-^x)\o = 1$ and $\rk H_- \leq \cork H_-$, proving $\rk G \leq 2\cork H_- \leq 6$.

If $H_-$ centralises a rank $1$ module $V_1 \leq V$ then $(H_-\cap H_-^x\cap H_-^y)\o = 1$ and $\rk G \leq 3 \cork H_-$; if we are not done then we may suppose $\cork H_- = 3$, and in particular $\cork H_+ = 2$.

If $H_+$ normalises a rank $2$ module $V_2 \leq V$ then we know from Step \ref{p:Pr2=1:LORO:st:H} that $V_2 = V^{-_i}$ is centralised by $U_+$. Incidently, $U_+ \neq 1$ since otherwise $\rk G \leq 6$ and we are done. But $I = (H_+\cap H_+^x)\o$ has no involutions because one such would invert $V_2 + V_2^x = V$, against the involutions in $H_+$ not being central in $G$. In particular $I$ is a unipotent subgroup of $H_+$; observe how $\rk I \geq 2 \rk H_+ - \rk G = \rk H_+ - 2 = \rk U_+ + 1$. Hence $I$ is a maximal unipotent subgroup of $H_+$, and $U_+ \leq I$. The same applies in $H_+^x$: therefore $\<U_+, U_+^x\> \leq I$, showing $(U_+ \cap U_+^x)\o \neq 1$. As the latter centralises $V_2 + V_2^x = V$, this is a contradiction.

So $H_+$ normalises a rank $1$ module $V_1 \leq V$ and by Step \ref{p:Pr2=1:LORO:st:H} again, $V_1 = V^{+_i}$ is centralised by $H_+$. In particular $(H_+\cap H_+^x\cap H_+^y) \o = 1$ and $\rk G \leq 3 \cork H_+ = 6$: we are done again.
\end{proofclaim}

\begin{step}\label{p:Pr2=1:LORO:st:contradiction}
Contradiction.
\end{step}
\begin{proofclaim}
Since $\rk G \leq 6$, one has $\rk H_+ \leq 4$; by Step \ref{p:Pr2=1:LORO:st:H}, $\rk H_+ = 3$. On the other hand $H_+ \geq C_G\o(V^{+_i}) \geq C$ shows that $H_+ = C$ does not depend on $v_+ \in V^{+_i}\setminus\{0\}$. In particular $V^{+_i}$ is TI, implying that $N = N_G\o(V^{+_i})$ has corank $\leq 2$. By Hrushovski's theorem and Proposition \ref{p:reductions}, equality holds. But $N$ is reducible and non-soluble, so by Step \ref{p:Pr2=1:LORO:st:H}, $\rk N = 3$ and $\rk G = 5$.

Now $(V^{+_i})^G$ is generic in $V$, so $v_-^G$ is not. This proves that $\rk H_- \geq 3$. But on the other hand $G$ is an $N_\circ^\circ$-group as easily seen in the current setting, so $\rk H_- \leq 2$ by uniqueness principles.
\end{proofclaim}

As a consequence, if $N = N_G\o(A)$ is non-soluble where $A \leq G$ is an infinite abelian subgroup, then $N$ is irreducible: induction and $\Pr_2(G) = 1$ yield a contradiction. This proves that $G$ is an $N_\circ^\circ$-group.
\end{proof}

\subsubsection{Identification in Pr\"{u}fer rank $1$}

We now identify the $N_\circ^\circ$ case.

\begin{proposition}\label{p:Pr2=1:PSL2}
If $\Pr_2(G) = 1$ then $G \simeq \PSL_2(\K)$ in its adjoint action on $V \simeq \K^3$.
\end{proposition}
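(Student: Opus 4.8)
The setting is now very constrained: by Proposition~\ref{p:Pr2=1:LORO}, $G$ is an $N_\circ^\circ$-group with $\Pr_2(G)=1$, so by the $N_\circ^\circ$ analysis $G$ is either $\PSL_2(\K)$ (what we want) or has Sylow $2$-subgroup that of $\PSL_2(\C)$ or $\SL_2(\C)$ --- but the latter two have central involutions, and $G\not\le\GL(V)$ would need $R\o(G)=1$ with a central involution inverting $V$, which is soluble-free nonsense unless $V$ were of rank $2$; more precisely, a central involution in a connected $G$ acting faithfully and $G$-minimally on $V$ of rank $3$ would force $V=V^{+}\oplus V^{-}$ with one summand of rank $1$ and the other of rank $2$, contradicting $G$-minimality. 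So I first aim to rule out a central involution and the $\SL_2(\C)$-type Sylow, leaving only the possibility that the Sylow $2$-subgroup is a $2$-torus of Pr\"ufer rank $1$, i.e. exactly that of $\PSL_2(\K)$ --- and then, crucially, invoke the second part of the $N_\circ^\circ$ analysis: \emph{provided every $C_G\o(i)$ is soluble}, involutions are conjugate and each $C_G\o(i)$ is a Borel. The main case split is therefore: (a) some involution centraliser is non-soluble, or (b) all are soluble.

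In case (a), a non-soluble $C_G\o(i)$ is reducible (it fixes $C_V\o(i)\neq 0$, which is a proper nontrivial submodule once we check it is neither $0$ nor all of $V$; it cannot be all of $V$ by faithfulness, and it is nonzero since $i$ is a torsion automorphism of the connected group $V$ of odd characteristic $p$), so Step~\ref{p:Pr2=1:LORO:st:H} applies and gives $C_G\o(i)=U\rtimes C$ with $C\simeq\SL_2(\K)$; but then $C$ contains an involution distinct from... wait, more carefully: $C\simeq\SL_2(\K)$ has a central involution $j$, and the structure of the Sylow $2$-subgroup (Pr\"ufer rank $1$, connected) forces $j=i$, so $i$ is the central involution of an $\SL_2(\K)$ sitting inside $C_G\o(i)$, giving $U\rtimes\SL_2(\K)$ with $i$ inverting $U$. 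Here I would run the TI/genericity machinery exactly as in the proof of Proposition~\ref{p:Pr2=1:LORO}: $C_V\o(U)$ has rank $\ge 2$ (as $U$ normalises both $V^{+_i}$ of rank $1$ and $V^{-_i}$ of rank $2$ and centralises one of them by Step~\ref{p:Pr2=1:LORO:st:H}), so for generic $g$, $\<U,U^g\>$ is reducible hence soluble, hence $U=U^g$ by uniqueness principles --- forcing $U=1$ and $C_G\o(i)=C\simeq\SL_2(\K)$ of rank $3$. Then the adjoint module of $\PSL_2(\K)$ should be recognised: $G$ has rank $\ge$ that of $\PSL_2(\K)$, a generic involution-centraliser argument bounds $\rk G$, and the rank $3k$ analysis pins down that $V$ restricted to $C$ has $C$ centralising a rank $1$ piece --- consistent only with the adjoint picture --- and finally one identifies $G=\PSL_2(\K)$ via Hrushovski's theorem applied to a strongly minimal coset space, or directly via a known recognition theorem for $\PSL_2$ from its involution centraliser.

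In case (b), all $C_G\o(i)$ are soluble Borels, so $\beta=Y\rtimes\Theta$ from Proposition~\ref{p:Pr2>0:mixedBorel} must coincide (up to conjugacy) with some $C_G\o(i)$, giving a genuine non-nilpotent Borel $B_i=C_G\o(i)=Y\rtimes\Theta$ with $Y=U_p(B_i)\neq 1$. Now the same reducibility-plus-uniqueness contradiction as in Proposition~\ref{p:LORO:Pr2=1}, Step labelled ``Contradiction'', applies almost verbatim: $i$ is not central in $G$ (no central involution), so $i$ centralises $Y$, $Y$ normalises $V^{+_i}$ (rank $1$) and $[V,i]$ (rank $2$), whence $\rk C_V\o(Y)=2$, and for arbitrary $g$, $\<Y,Y^g\>$ is reducible hence soluble, so $Y=Y^g$ by uniqueness principles --- contradiction. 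Hence case (b) is empty, we are in case (a), and $G\simeq\PSL_2(\K)$ with the adjoint action. \textbf{The main obstacle} I anticipate is the final identification inside case (a): getting from ``$C_G\o(i)\simeq\SL_2(\K)$ of rank $3$, $\Pr_2(G)=1$, $G$ an $N_\circ^\circ$-group acting on rank-$3$ $V$'' to the clean conclusion $G=\SL_2(\K)=C_G\o(i)$ acting adjointly, i.e. proving $G$ has no room above $C_G\o(i)$. The cleanest route is probably: $C_G\o(i)$ acts on $V$ with $V^{+_i}$ a TI rank-$1$ submodule (since $C_G\o(i)=C_G\o(v_+)$ for $v_+\in V^{+_i}$, as any larger centraliser would contradict what we know), so $N_G\o(V^{+_i})$ has corank $\le 2$, and then Hrushovski's theorem forces corank exactly $2$ and, combined with the rank bounds and $N_\circ^\circ$-uniqueness (exactly as in Step~\ref{p:Pr2=1:LORO:st:contradiction}), pins $\rk G=3$, so $G=C_G\o(i)\cdot(\text{something of rank }0)$ is connected of rank $3$ containing $\SL_2(\K)$ of rank $3$, hence $G=\SL_2(\K)$; the action is then the adjoint one by the rank $3k$ analysis and $C_V(G)=0$ from Proposition~\ref{p:reductions}.
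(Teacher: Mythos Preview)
Your plan has a fatal gap at the very first step: the elimination of a central involution. You write that a central involution would force $V=V^+\oplus V^-$ with summands of ranks $1$ and $2$, contradicting $G$-minimality. This is false: a central involution $i$ gives a $G$-invariant decomposition $V=V^{+_i}\oplus V^{-_i}$, and $G$-minimality plus faithfulness then force $V^{+_i}=0$ and $V^{-_i}=V$, i.e.\ $i$ simply inverts $V$. There is no contradiction at all. (Relatedly, the Sylow of $\PSL_2(\C)$ does \emph{not} force a central involution in $G$, so that part of your opening paragraph is also off.)

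This error propagates into case~(a). You claim that a non-soluble $C_G^\circ(i)$ is reducible because it normalises $C_V^\circ(i)\neq 0$; but $C_V^\circ(i)=0$ exactly when $i$ inverts $V$, i.e.\ when $i$ is central. In fact Proposition~\ref{p:Pr2=1:LORO} says every definable connected reducible subgroup is \emph{soluble}; so a non-soluble $C_G^\circ(i)$ must be irreducible. If it is proper, induction gives $\PSL_2(\L)$ or a bad group of rank~$3$; both are impossible since $i$ is a central involution of $C_G^\circ(i)$. Hence case~(a) is precisely the case ``$i$ is central in $G$ and inverts $V$'', and your subsequent analysis (applying Step~\ref{p:Pr2=1:LORO:st:H} to a reducible $C_G^\circ(i)$) never gets off the ground. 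This central-involution case is the genuine obstacle: it is why the paper works not with $i$ but with an element $\alpha\in S^\circ$ of minimal order such that $C_G^\circ(\alpha)$ is soluble, then passes to the quotient $G/\langle\alpha^2\rangle$ and carries out a delicate analysis of unipotent ranks, the torus rank, and the connectedness of $S$.

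Your case~(b) is closer to sound: once one knows $B_i=C_G^\circ(i)$ is a \emph{non-nilpotent} Borel (this is in the $N_\circ^\circ$ analysis, not merely ``Borel''), the argument that $Y_i=U_p(B_i)\neq 1$ has $\rk C_V^\circ(Y_i)\ge 2$ and is therefore normal does work, exactly as in the final step of Proposition~\ref{p:LORO:Pr2=1}. Your detour through the Borel $\beta$ of Proposition~\ref{p:Pr2>0:mixedBorel} is unnecessary and your claim that $\beta$ must be conjugate to some $C_G^\circ(i)$ is unjustified --- there is no reason different conjugacy classes of Borels cannot coexist. But case~(b) alone does not prove the proposition; case~(a), i.e.\ the central-involution configuration, is where the work lies.
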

\begin{proof}

By the rank $3k$ analysis it suffices to recognize $\PSL_2(\K)$. We wish to apply the $N_\circ^\circ$ analysis \cite{DJInvolutive}. Remember that $S$ stands for a Sylow $2$-subgroup of $G$.

\begin{notationinproof*}
Let $\alpha \in S\o$ be such that $C_G\o(\alpha)$ is soluble with $\alpha$ of minimal order. (Such an element certainly exists as $G$ is an $N_\circ^\circ$-group by Proposition \ref{p:Pr2=1:LORO}.)
\end{notationinproof*}

\begin{step}\label{p:Pr2=1:PSL2:st:alpha}
We may suppose that $C_G\o(\alpha)$ is a Borel subgroup of $G$ and $\alpha^2 \in Z(G)$.
\end{step}
\begin{proofclaim}
Let $H = C_G\o(\alpha^2)$, a non-soluble group. By Proposition \ref{p:Pr2=1:LORO}, $H$ is irreducible. If $H < G$ then by induction $H \simeq \PSL_2$; one has $\alpha^2 = 1$ and $H = G$, a contradiction.
So $G = H$ and $\alpha^2 \in Z(G)$.
We go to the quotient $\overline{G} = G/\<\alpha^2\>$, where the involution $\overline{\alpha}$ satisfies:
\[\pi^{-1}\left(C_{\overline{G}}\o(\overline{\alpha})\right)\o = C_G\o(\alpha)\]
Therefore any involution in $\overline{G}$ has a soluble-by-finite centraliser, and we apply the $N_\circ^\circ$ analysis.
If $\overline{G} \simeq \PSL_2(\K)$ then using \cite[Theorem 1]{ACCentral}, $G \simeq \PSL_2(\K)$ or $G \simeq \SL_2(\K)$; the rank $3k$ analysis brings the desired conclusion.
Therefore we may suppose that $C_{\overline{G}}\o(\overline{\alpha})$ is a Borel subgroup of $\overline{G}$, so that $C_G\o(\alpha)$ is one of $G$.
\end{proofclaim}

\begin{notationinproof*}
Let $B_\alpha = C_G\o(\alpha)$, a Borel subgroup of $G$; write $B_\alpha = U_\alpha \rtimes T$.
\end{notationinproof*}

Notice that $\alpha \in S\o \leq T$ where $T$ is the maximal good torus we fixed earlier; hence $B_\alpha$ contains $T$ all right. On the other hand it is not clear whether $B_\alpha$ is non-nilpotent, nor even whether $U_\alpha$ is non-trivial. By Proposition \ref{p:Pr2>0:mixedBorel}, non-trivial unipotent subgroups however exist.

\begin{step}\label{p:Pr2=1:PSL2:st:unipotent}
If $U \leq G$ is a maximal unipotent subgroup, then $\rk U \leq 2$ and $\rk C_V\o(U) = 1$. Moreover $\rk U_\alpha \leq 1$; if $C_V(\alpha) \neq 0$ then $U_\alpha = 1$.
\end{step}
\begin{proofclaim}
If $\rk C_V\o(U) = 2$ then for generic $g \in G$, $\<U, U^g\>$ is reducible, hence soluble by Proposition \ref{p:Pr2=1:LORO}: against maximality of $U$. Therefore $\rk C_V\o(U) = 1$, and let $V_1 = C_V\o(U)$; again, $N_G\o(V_1)$ is soluble, so we fix a Borel subgroup $B \geq N_G\o(V_1) \geq U$. Write $B = U \rtimes \Theta$ for some (possibly non-maximal) good torus $\Theta$ of $G$.

If $V_1$ is TI then $\cork B \leq 2$, but conjugates of $B$ can meet only in toral subgroups by uniqueness principles:
\[2 \rk B - \rk \Theta \leq \rk G \leq \rk B + 2\]
so $\rk U \leq 2$ and we are done.

If $V_1$ is not, then there are $g \notin N_G(V_1) \geq N_G(U)$ and $v_1 \in V_1\cap V_1^g \setminus\{0\}$; then $G > C_G\o(v_1) \geq \<U, U^g\> = K$ is non-soluble, so by induction $\rk K = 3$ and $\rk U = 1$. We are done again.

Let us review the argument in the case of $U_\alpha = U_p(C_G\o(\alpha))$, supposing $\rk U_\alpha = 2$. Then $V_\alpha = C_V\o(U_\alpha)$ is a rank $1$, TI subgroup of $V$, and $B_\alpha = N_G\o(V_\alpha)$ has corank at most $2$.

Now notice that distinct conjugates of $B_\alpha$, which may not intersect over unipotent elements by uniqueness principles, may not intersect in a maximal good torus either as otherwise $\alpha \in B_\alpha \cap B_\alpha^g$ and $B_\alpha = C_G\o(\alpha) = B_\alpha^g$. Hence $\rk (B_\alpha\cap B_\alpha^g)\o < \rk T$ and we refine our estimate into:
\[2 \rk B_\alpha - (\rk T - 1) \leq \rk G \leq \rk B_\alpha + 2\]
showing $\rk U_\alpha \leq 1$. Finally if $C_V(\alpha) \neq 0$, then $U_\alpha$ normalises $C_V(\alpha)$ and $[V, \alpha]$; this shows $\rk C_V\o(U_\alpha) \geq 2$ and forces $U_\alpha = 1$.
\end{proofclaim}

\begin{step}\label{p:Pr2=1:PSL2:st:T}
$\rk T = 1$.
\end{step}
\begin{proofclaim}
Suppose $\rk T > 1$. Then since $\Pr_2(T) =1$ and $\ell_T(V) > 1$ by Proposition \ref{p:Pr2>0:mixedBorel}, the estimate $\rk T \leq \rk V + \Pr_2(T) - \ell_T(V)$ yields $\rk T = \ell_T(V) = 2$.

We shall construct a bad subgroup of toral type; this will keep us busy for a couple of paragraphs.
In a $T$-composition series for $V$, let $X_i$ be the rank $i$ factor. Then $T \hookrightarrow T/C_T(X_1) \times T/C_T(X_2)$.

We first claim that $T$ does not centralise $X_1$. For if it does, then $V_1 = C_V(T)$ clearly has rank $1$. Now $C_V(\alpha) \neq 0$ so by Step \ref{p:Pr2=1:PSL2:st:unipotent}, $U_\alpha = 1$ and $T$ is a Borel subgroup; in view of Proposition \ref{p:Pr2=1:LORO} one has $T = N_G\o(V_1)$. If $V_1$ is TI, then $\cork T \leq 2$ and $\rk G \leq 4$; by Wiscons' analysis, the presence of involutions, and Proposition \ref{p:reductions}, this is a contradiction. Hence $V_1$ is not TI: there are $g \notin N_G(V_1) = N_G(T)$ and $v_1 \in V_1\cap V_1^g\setminus\{0\}$. Let $H = C_G\o(v_1) \geq \<T, T^g\>$; $H$ is not soluble so by Proposition \ref{p:Pr2=1:LORO} again, it is irreducible; induction yields a contradiction.
Hence $T$ does not centralise $X_1$.

We now construct a rank $1$ torus with no involutions, and prove that $T$ is a Borel subgroup.
Let $\tau = C_T\o(X_1) < T$; by Zilber's Field Theorem, there is a field structure $\K$ with $T/\tau \simeq T/C_T(X_1) \simeq \K^\times$ in its action on $X_1$. Clearly $\tau$ is a good torus of rank $1$. Since $\Pr_2(G) = 1$, $\tau$ has no involutions; since $T$ does, $\tau$ is characteristic in $T$. Now let $\tau' = C_T\o(X_2)$. If $\tau' = 1$ then by Zilber's Field Theorem again, there is a field structure $\L$ with $T \simeq T/C_T(X_2) \simeq \L^\times$ in its action on $X_2$. Then the good torus $\tau \neq 1$ has no torsion, a contradiction. Hence $\tau'$ is infinite; $T = \tau \times \tau'$ and $\tau'$ does have involutions. In particular $C_V(\alpha) \neq 0$ so by Step \ref{p:Pr2=1:PSL2:st:unipotent}, $U_\alpha = 1$ and $T$ is a Borel subgroup of $G$.

We can finally construct a bad subgroup of toral type.
Let $V_1 = C_V(\tau)$; clearly $V_1$ has rank $1$ and $N_G\o(V_1) = T$. Here again, if $V_1$ is TI then $\rk G \leq 4$, a contradiction as above. So $V_1$ is not: there are $g \notin N_G(V_1) = N_G(\tau) = N_G(T)$ and $v_1 \in V_1\cap V_1^g \setminus\{0\}$.
Let $H = C_G\o(v_1) \geq \<\tau, \tau^g\>$. If $H$ is soluble and contains no unipotence, then $H \leq C_G(\tau) = T$ and $T = T^g$, forcing $\tau = \tau^g$ and $V_1 = V_1^g$: a contradiction. If $H$ is soluble it then extends to a Borel subgroup $U \rtimes \tau$ for some non-trivial $p$-unipotent subgroup $U$. By Step \ref{p:Pr2=1:PSL2:st:unipotent}, $\rk C_V\o(U) = 1$; so $\tau$ centralises $C_V\o(U) = C_V(\tau) = V_1 = V_1^g$: a contradiction again. Hence $H$ is not soluble. By Proposition \ref{p:Pr2=1:LORO}, induction, and since $\tau$ has no involutions, $H$ is a simple bad group of rank $3$ containing toral elements.

But by Proposition \ref{p:Pr2>0:mixedBorel} there is a Borel subgroup $\beta = Y \rtimes \Theta$ where neither is trivial. Then certainly $\rk \Theta = 1$; moreover, by Step \ref{p:Pr2=1:PSL2:st:unipotent}, $W_1 = C_V\o(Y)$ has rank $1$. If $W_1$ is TI then $\cork \beta \leq 2$, so $\rk G \leq \rk Y + \rk \Theta + 2 \leq 5$. By Wiscons' analysis, $\rk G = 5$ and $\rk Y = 2$, so $\beta$ intersects $H$, necessarily in a conjugate of $\Theta$. Hence $\Theta$ has no involutions, and therefore centralises $W_1$; one sees $V = W_1 \oplus [V, \Theta]$ with $W_1 = C_V(\Theta)$. Therefore $T$ normalises $W_1$, so $N_G\o(W_1) = \beta \geq T$, a contradiction.

As a conclusion $W_1$ is not TI: there are $\gamma \notin N_G(W_1) = N_G(Y)$ and $w_1 \in W_1\cap W_1^\gamma\setminus\{0\}$. Now $K = C_G\o(w_1) \geq \<Y, Y^g\>$ is a simple bad group of rank $3$ containing unipotent elements.
Since $H \cap K = 1$, $\cork H \geq \rk K = 3$ and vice-versa. So both $v_1^G$ and $w_1^G$ are generic in $V$: they intersect, which conjugates $H$ to $K$, a contradiction.
\end{proofclaim}

Always by Proposition \ref{p:Pr2>0:mixedBorel}, there is a Borel subgroup of mixed structure $\beta = Y \rtimes \Theta$. So $T = \Theta$ itself is no Borel subgroup; in particular $U_\alpha \neq 1$ and $T = d(S\o)$.

\begin{step}\label{p:Pr2=1:PSL2:st:S=So}
$S = S\o$.
\end{step}
\begin{proofclaim}
If $S\o < S$ then there is an element $w$ inverting $S\o$; $w$ inverts $T$ as well. Let $V_1$ be a $T$-minimal subgroup of $V$. If $V_1 = V$ then $w$ gives rise to a finite-order field automorphism on $V_1 \rtimes T$: a contradiction. If $\rk V_1 = 2$ then $V_1 \cap V_1^w$ is infinite, so by $T$-minimality $V_1^w = V_1$; if $T$ does not centralise $V_1$ then $w$ gives rise to a field automorphism on $V_1 \rtimes T$; hence $T$ centralises $V_1$, against $T$-minimality. Therefore $\rk V_1 = 1$. If $V_1^w = V_1$ consider $V_1$; if not, consider $V/(V_1+V_1^w)$. In any case $T$ which is inverted by $w$ acts on a rank $1$, $w$-invariant section, and therefore centralises it.

Hence $C_V(\alpha) \neq 0$, and Step \ref{p:Pr2=1:PSL2:st:unipotent} contradicts $U_\alpha \neq 1$.
\end{proofclaim}

The analysis of $V$ cannot be pushed beyond a certain limit. Of course if $V_1 = C_V\o(U_\alpha)$ is TI we find a contradiction; but if it is not, one can imagine having inside $G$ a bad unipotent centraliser: see the comment after the proof of Proposition \ref{p:Pr2>0:mixedBorel}. So we need to inspect the inner structure of $G$ more closely; this will be done in the quotient $G/\<\alpha^2\>$ (recall from Step \ref{p:Pr2=1:PSL2:st:alpha} that $\alpha^2 \in Z(G)$).

\begin{step}
Contradiction.
\end{step}
\begin{proofclaim}
We sum up the information: $\rk U_\alpha = \rk T = 1$ and the Sylow $2$-subgroup is connected.
We move to $\overline{G} = G/\<\alpha^2\>$ where this holds as well and $\overline{\alpha}$ is an involution. By connectedness of the Sylow $2$-subgroup, strongly real elements are unipotent; their set is non-generic (for instance \cite[Theorem 1]{BCSemisimple}).
Let us consider the definable function which maps two involutions of $\overline{G}$ to their product.

Let $\overline{r} = \overline{\alpha}\cdot\overline{\beta}$ be a generic product of conjugates of $\overline{\alpha}$. Then $\overline{C} = C\o_{\overline{G}}(\overline{r})$ is soluble, since otherwise the preimage $(\pi^{-1}(\overline{C}))\o = C_G\o(r)$ is non-soluble, whence irreducible by Proposition \ref{p:Pr2=1:LORO}: induction applied to $C_G\o(r)$ yields a contradiction. If $\overline{C}$ is a good torus, then by connectedness of $S$ and $\overline{S}$, one finds $\overline{\alpha} \in \overline{C}$, a contradiction. So $\overline{C}$ contains a non-trivial unipotent subgroup. Let $\overline{B}$ be the only Borel subgroup of $\overline{G}$ containing $\overline{C}$ (uniqueness follows from uniqueness principles); $\overline{\alpha}$ normalises $\overline{B}$. $\overline{B}$ is not unipotent, as it would generically cover $\overline{G}$ by uniqueness principles, which is against \cite[Theorem 1]{BCSemisimple} again. So $\overline{B}$ contains a conjugate of $\overline{T}$ which we may, by a Frattini argument, assume to be $\overline{\alpha}$-invariant. Still by connectedness of $\overline{S}$, one has $\overline{\alpha} \in \overline{B}$. Hence $\overline{\alpha}$ is an involution of $\overline{B}$; such elements are conjugate over $\overline{U} = U_p(\overline{B})$.

It is then clear that the fibre over the generic strongly real element $\overline{r}$ has rank $\leq m = \rk \overline{U}$.
Since $C_{\overline{G}}\o(\overline{\alpha}) = \overline{B_\alpha}$, one gets the estimate:
\[2 (\rk G - 2) - m < \rk G\]
that is $\rk G \leq m + 3 \leq 5$ by Step \ref{p:Pr2=1:PSL2:st:unipotent}. But $\rk G \neq 4$ by Wiscons' analysis and Proposition \ref{p:reductions}, so $\rk \overline{U} = 2$.

Here is the contradiction concluding the analysis. We lift $\overline{B}$ to a Borel subgroup $B$ of $G$; $B$ has rank $3$. But we know that $V_1 = C_V\o(U_\alpha)$ has rank $1$ by Step \ref{p:Pr2=1:PSL2:st:unipotent}; moreover $B_\alpha = N_G\o(V_1)$ by Proposition \ref{p:Pr2=1:LORO}. If $V_1$ is TI then $\cork B_\alpha = 2$ and $\rk G = 4$: a contradiction. So $V_1$ is not and we find a bad group $K$ of rank $3$ containing $U_\alpha$. It must intersect $B$ non-trivially; so up to conjugacy in $K$, $U_\alpha \leq B$, against maximality of $U_\alpha$ as a unipotent subgroup.
\end{proofclaim}

This concludes the Pr\"{u}fer rank $1$ analysis.
\end{proof}

\subsection{The Pr\"{u}fer Rank 2 Analysis}\label{s:Pr2=2}

We now suppose $\Pr_2(G) = 2$ and shall show that $G \simeq \SL_3(\K)$ acts on $V$ as on its natural module. Unfortunately we cannot rely on Altseimer's unpublished work aiming at identification of $\PSL_3(\K)$ \cite[Theorem 4.3]{AContributions} through the structure of centralisers of involutions. There also exists work by Tent \cite{TSplit} but as it involves $BN$-pairs, it is farther from our methods.
Instead we shall construct a vector space structure on $V$ for which a large subgroup of $G$ will be linear.

\subsubsection*{More Material}

Technically speaking this section is quite different; the two main ingredients are strongly embedded subgroups, defined before Proposition \ref{p:Pr2=2:noPSL2}, and the Weyl group, defined as follows: $W = N_G(S\o)/C_G(S\o)$.
The Weyl group has been abundantly studied and defined in the past; this definition will suffice for our needs.


\subsubsection{Central Involutions}

\begin{proposition}\label{p:Pr2=2:centralinv}
Suppose that $\Pr_2(G) = 2$. If there is a central involution in $G$ then $S$ and $N_G(S\o)$ have degree at most $2$.
\end{proposition}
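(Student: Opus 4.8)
The plan is to show that any non-trivial 2-element of $S$ that is not the central involution acts with both a non-trivial fixed subspace and a non-trivial commutator subspace on $V$, and to exploit the resulting rank-$1$/rank-$2$ decomposition of $V$ to bound the number of connected components. First I would let $i \in Z(G)$ be the central involution; since $i$ is central it cannot invert $V$ (otherwise by $G$-minimality and faithfulness one gets a contradiction the way such arguments run elsewhere in the paper: $V$ would be $i$-inverted hence have exponent $2$, contradicting Proposition \ref{p:reductions}), so $V = C_V(i) \oplus [V,i]$ with both summands non-trivial, and by counting ranks one of them has rank $1$ and the other rank $2$. Now, $S\o = T\o$ is a $2$-torus of Pr\"ufer rank $2$, so $S\o$ has exactly three involutions $\{i, j, k\}$ with $k = ij$; since $i$ inverts $[V,i]$ and centralises $C_V(i)$, and $j,k$ commute with $i$, the groups $C_V(i)$ and $[V,i]$ are $\langle i,j,k\rangle$-invariant.

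The key step is to pin down the component count using the torality/good-torus structure together with the action on the rank-$1$ and rank-$2$ summands. I would argue that $C_G(S\o)$ is connected: indeed $C_G(S\o) \geq T$ (the maximal good torus containing $S\o$), and any element of $C_G(S\o)$ centralises $S\o$, hence lies in a connected group by the torality principles \cite[Corollary 3]{BCSemisimple} together with the fact that $S\o$ is a decent torus whose centraliser is connected in this context — so $C_G(S\o) = C_G\o(S\o)$ and $\deg C_G(S\o) = 1$. Then $S / S\o \hookrightarrow N_G(S\o)/C_G(S\o) = W$ acts faithfully on $S\o \simeq \Z_{2^\infty}^2$; the point is to bound the $2$-part of $W$, equivalently $|S/S\o|$. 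Here I would use the hypothesis that $i \in Z(G)$: every element of $W$ fixes $i$, so $W$ (and in particular the image of $S/S\o$) embeds into the stabiliser of $i$ inside $\operatorname{GL}_2(\Z_2)$ acting on the two Pr\"ufer coordinates — but more usefully, an element $w \in S/S\o$ either fixes $j$ (hence fixes all three involutions, so centralises the $2$-torsion of $S\o$, hence — since $S\o$ is divisible and $w$ has $2$-power order — centralises $S\o$ and lies in $S\o$, i.e. is trivial in $S/S\o$) or swaps $j$ and $k$. Thus $S/S\o$ has order at most $2$, giving $\deg S = \deg(S/S\o) \leq 2$; and since $N_G(S\o)/C_G(S\o) = W$ with $C_G(S\o)$ connected, the component group of $N_G(S\o)$ is a quotient-related object of the same nature, and the same dichotomy (fix or swap $j,k$) bounds $\deg N_G(S\o) \leq 2$.

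The main obstacle I anticipate is the passage from "$w$ fixes all three involutions" to "$w \in S\o$" — i.e. showing that the only $2$-power-torsion automorphism of the $2$-torus $\Z_{2^\infty}^2$ fixing every involution is trivial modulo $S\o$, and more precisely that such a $w$ is realised by an element of $S\o$ itself rather than merely acting trivially. This is where one needs to be careful: an automorphism of $\Z_{2^\infty}^2$ fixing the $2$-torsion need not be trivial (e.g. $x \mapsto x^{1+2}$ fixes involutions), so the argument cannot be purely internal to $S\o$; instead I would invoke that $w$ acts on $V$ via $C_V(i)$ (rank $1$) and $[V,i]$ (rank $2$) and that a $2$-element acting on a rank-$1$ connected group either centralises it or has fixed points of corank, forcing — after also using the action on the rank-$2$ piece and Zilber's Field Theorem to linearise — that $w$ is toral, hence absorbed into $S\o$. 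So the real content is combining the constraint coming from $Z(G) \ni i$ (which rigidifies the action on the Weyl/component level) with the constraint that $V$ has rank exactly $3$ (which leaves no room for a large torsion-quotient to act). Once both are in place, the bound $\deg S \leq 2$ and $\deg N_G(S\o) \leq 2$ follow.
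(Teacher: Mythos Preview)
Your opening move is wrong: you claim that the central involution $i$ cannot invert $V$, but in fact it \emph{must} invert $V$. The reasoning is simple: since $i \in Z(G)$, both $C_V(i)$ and $[V,i]$ are $G$-submodules of $V$; by $G$-minimality one of them is $0$ and the other is $V$. If $C_V(i) = V$ then $i$ acts trivially, contradicting faithfulness. Hence $C_V(i) = 0$ and $i$ inverts $V$. Your assertion that ``$V$ would be $i$-inverted hence have exponent $2$'' confuses the involution acting as $-1$ with the module having $2$-torsion; since $V$ has odd exponent $p$ by Proposition~\ref{p:reductions}, inversion is a perfectly good order-$2$ automorphism. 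So the decomposition $V = C_V(i) \oplus [V,i]$ into a rank-$1$ and a rank-$2$ piece, on which the entire rest of your argument rests, does not exist.

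The paper's proof proceeds quite differently once one knows the central involution (call it $k$) inverts $V$. The other two involutions $i,j \in S\o$ then satisfy $ij = k$, so their $\pm 1$-eigenspaces on $V$ are complementary and have \emph{different} ranks; hence $i$ and $j$ are not $G$-conjugate, and every element of $N_G(S\o)$ fixes all three involutions. This already forces $W$ to have exponent $\leq 2$ and order $\leq 4$. The remaining case $|W| = 4$ is excluded by a separate argument: such a $W$ would contain an element $w$ inverting $S\o$, and one then chooses a Pr\"ufer-rank-$1$ subtorus $S_0 \ni k$ and shows, via a $\Sigma$-minimality/field-automorphism analysis on $V$ (essentially the argument of Step~\ref{p:Pr2=1:PSL2:st:S=So} in Proposition~\ref{p:Pr2=1:PSL2}), that $C_V(\Sigma) \neq 0$ for $\Sigma = d(S_0)$ --- contradicting the fact that $k \in \Sigma$ inverts $V$. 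You correctly anticipated that ``fixes all involutions'' does not imply ``centralises $S\o$'', but your proposed workaround via the action on $V$ cannot succeed as stated, since the eigenspace decomposition you invoke is for the wrong involution.
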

\begin{proof}
Suppose there is a central involution, say $k \in S\o$ by torality principles. Observe that $k$ inverts $V$.

Then the other two involutions in $S\o$ do not have the same signature in their actions on $V$: they may not be conjugate. It follows from torality principles that $G$ has exactly three conjugacy classes of involutions, and that all elements in $N_G(S\o)$ centralise the involutions in $S\o$. In particular, the Weyl group has exponent at most $2$ and order at most $4$ (see \cite[Consequence of Fact 1]{Dprank} if necessary). Hence $N_G(S\o) = C_G(S\o) \cdot S$.

The argument bounding the order will resemble the one in Step \ref{p:Pr2=1:PSL2:st:S=So} of Proposition \ref{p:Pr2=1:PSL2}.
Suppose the order of $W$ is $4$. Then by \cite{Dprank} again there is an element $w \in S$ inverting $S\o$.
Let $S_0 < S\o$ be a $2$-torus of Pr\"{u}fer $2$-rank $1$ containing $k$ and $\Sigma = d(S_0)$.
Let $V_1$ be a $\Sigma$-minimal subgroup of $V$. If $V_1 = V$ then $w$ gives rise to a finite-order field automorphism on $V_1 \rtimes \Sigma$: a contradiction. If $\rk V_1 = 2$ then $V_1 \cap V_1^w$ is infinite, so by $\Sigma$-minimality $V_1^w = V_1$; if $\Sigma$ does not centralise $V_1$ then $w$ gives rise to a field automorphism on $V_1 \rtimes \Sigma$; hence $\Sigma$ centralises $V_1$, against $\Sigma$-minimality. Therefore $\rk V_1 = 1$. If $V_1^w = V_1$ consider $V_1$; if not, consider $V/(V_1+V_1^w)$. In any case $\Sigma$ which is inverted by $w$ acts on a rank $1$, $w$-invariant section, and therefore centralises it. Hence $C_V(\Sigma) \neq 0$, a contradiction to $k$ inverting $V$.
\end{proof}

\subsubsection{Removing $\SL_2(\K) \times \K^\times$}

\begin{proposition}\label{p:Pr2=2:noSL2}
Suppose that $\Pr_2(G) = 2$. Then $G$ contains no definable copy of $\SL_2(\K) \times \K^\times$.
\end{proposition}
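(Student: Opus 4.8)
The plan is to assume for contradiction that $G$ contains a definable subgroup $L = C \times Z$ with $C \simeq \SL_2(\K)$ and $Z \simeq \K^\times$, and to derive a contradiction from the action on the rank $3$ module $V$. The first move is to understand how $L$ decomposes $V$. Let $i$ be the central involution of $C$; since $\Pr_2(G) = 2$ and $i \in Z(C)$, the torus $Z$ together with the Sylow $2$-subgroup of $C$ generate a $2$-torus of Pr\"ufer rank $2$, which I may assume to lie in $T$. Now $i$ acts on $V$ with $V = C_V(i) \oplus [V,i]$; by the rank $3k$ analysis $C$ cannot act irreducibly on a rank $3$ module (except in the excluded even characteristic, but here $V$ has odd prime exponent by Proposition~\ref{p:reductions}), so $C$ centralises a rank $1$ factor. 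Combined with the action of the extra central torus $Z$, the natural outcome is $V = V_1 \oplus V_2$ with $\rk V_1 = 1$, $\rk V_2 = 2$, where $C$ centralises $V_1$ and acts on $V_2$ as on its natural module $\simeq \K^2$, while $Z$ acts by scalars on $V_2$ and acts on $V_1$ through some power character. The key point to extract is that $C_L(V_2)^\circ$ and $C_L(V_1)^\circ$ are both nontrivial: $C$ centralises $V_1$, so $C \leq C_G(V_1)$, and $Z$ may or may not centralise $V_1$ — this dichotomy will have to be tracked.

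Next I would exploit the rank count. We have $\rk L = 3\rk\K + \rk\K = 4$ (if $\rk \K = 1$, which follows once a field structure is present) — more carefully, $\rk C = 3\rk \K$ and $\rk Z = \rk \K$, and $C_V(i) \neq 0$ is a rank $1$ subgroup on which $C$ acts trivially, so $N_G^\circ(V_1) \geq C$. If $V_1$ is TI then $\cork N_G^\circ(V_1) \leq 2$, so $\rk N_G^\circ(V_1) \geq \rk G - 2$; since $N_G^\circ(V_1)$ is reducible and non-soluble it falls under Proposition~\ref{p:Pr2=1:LORO} / the structure in Step~\ref{p:Pr2=1:LORO:st:H}, forcing it to have the shape $U \rtimes C'$ with $C' \simeq \SL_2(\K)$ and $i'$ the central involution — but then the Pr\"ufer $2$-rank of this subgroup is $1$, whereas $L \leq C_G(V_1)$ would need Pr\"ufer rank $2$ from $Z$ unless $Z$ centralises $V_1$ too. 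Tracking which case we are in: if $Z$ does not centralise $V_1$, then $Z \not\leq N_G^\circ(V_1)$, but $Z$ normalises $V_1$ (it acts by a character), contradiction; so $Z$ centralises $V_1$, hence $L \leq C_G^\circ(V_1) = N_G^\circ(V_1)$, which by the rank $3k$-type structure is $U \rtimes \SL_2(\K)$ of Pr\"ufer rank $1$ — contradicting $\Pr_2(L) = 2$. If on the other hand $V_1$ is not TI, then there are $g \notin N_G(V_1)$ and $v_1 \in V_1 \cap V_1^g \setminus \{0\}$; then $H = C_G^\circ(v_1) \geq \langle C, C^g\rangle$ (since $C$, $C^g$ both centralise $v_1$) is non-soluble, hence by Proposition~\ref{p:Pr2=1:LORO} irreducible, and by induction on $\rk V$ — applied to a rank $3$ module with a smaller ambient group or by the rank $3k$ analysis applied to a proper subgroup — we would get a simple bad group of rank $3$ or $\PSL_2$/$\SL_3$, none of which can contain two distinct copies of $\SL_2(\K)$ sharing the central involution, a contradiction.

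A cleaner route, which I would try to make the main line, is the following. Since $Z \simeq \K^\times$ centralises $C$ and $C$ acts on $V_2 \simeq \K^2$ naturally, $Z$ must act on $V_2$ by scalars (the centraliser in $\GL(V_2)$ of the natural $\SL_2$-action is exactly $\K^\times$); and $Z$ acts on $V_1 \simeq \K_+$ via $\K^\times$ by Zilber's Field Theorem, either trivially or faithfully. Consider $\theta = C_Z^\circ(V_1)$: it is either $1$ or all of $Z$. If $\theta = Z$ then $L = C \times Z \leq C_G(V_1)$, and $C_G^\circ(V_1)$ is non-soluble reducible, so Step~\ref{p:Pr2=1:LORO:st:H} applies and gives it the form $U \rtimes C'$ with $\Pr_2 = 1$ — but $L$ has $\Pr_2 = 2$ inside it, contradiction. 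If $\theta = 1$ then $Z$ acts faithfully on $V_1$, and $C_V(Z) \neq 0$ forces $C_V(Z)$ to be a rank $2$ subgroup on which… — actually $Z$ acts on $V_2$ by nontrivial scalars, so $C_V(Z) \subseteq V_1 \cap \{0\} = 0$, meaning $Z$ acts freely on $V$; but $Z$ is a good torus of rank $\rk\K = 1$ acting freely while also $\ell_Z(V) \geq 2$ (as $V_1, V_2$ are both $Z$-invariant proper submodules), and Lemma~A gives $\rk Z \leq \rk V - \ell_Z(V) \leq 3 - 2 = 1$, which is consistent — so freeness alone is not the obstruction. The real obstruction is then the TI analysis of $V_1$ as above, feeding into either a Pr\"ufer-rank clash inside a reducible centraliser or into a bad-group configuration that cannot accommodate $\SL_2(\K) \times \SL_2(\K)^g$.

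\textbf{Main obstacle.} The hard part will be the non-TI case for $V_1$: ruling out that $\langle C, C^g \rangle$ lives happily inside a larger reducible or simple subgroup. The clean way around it is to observe that $i$ and $i^g$ are two involutions both inverting the rank $2$ module $V_2^{?}$ — more precisely, one must show that $C$ and $C^g$ generate something whose Sylow $2$-structure is incompatible with $\Pr_2 = 2$ and the known classification of reducible subgroups from Step~\ref{p:Pr2=1:LORO:st:H}, or else invoke that $C_G^\circ(v_1)$ being a bad group of rank $3$ has no involutions at all (bad group analysis), immediately killing it since it contains $i$. That last observation — a bad group of rank $3$ has no involutions, so it cannot be $C_G^\circ(v_1) \geq C \ni i$ — is likely the quickest finish, and I would structure the non-TI case to funnel directly into it. The TI case funnels into the Pr\"ufer-rank contradiction inside a reducible non-soluble subgroup via Step~\ref{p:Pr2=1:LORO:st:H}. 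I expect the whole argument to be two short steps: one establishing the $V = V_1 \oplus V_2$ decomposition with $L \leq C_G(V_1)$ forced (the TI / $\theta = Z$ case) or $V_1$ non-TI, and one deriving the contradiction in each branch.
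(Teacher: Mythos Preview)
Your proposal has a genuine gap. The central move --- applying Step~\ref{p:Pr2=1:LORO:st:H} (or Proposition~\ref{p:Pr2=1:LORO}) to conclude that a reducible non-soluble subgroup such as $N_G^\circ(V_1)$ or $C_G^\circ(v_1)$ has the shape $U\rtimes\SL_2$ with Pr\"ufer rank~$1$ --- is circular. Proposition~\ref{p:Pr2=1:LORO} is proved under the ambient hypothesis $\Pr_2(G)=1$; here $\Pr_2(G)=2$. The paper's own version of that structure lemma in the $\Pr_2=2$ setting carries the \emph{extra hypothesis} $\Pr_2(H)\leq 1$ on the subgroup, which is precisely what you are trying to conclude. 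Concretely, nothing prevents $N_G^\circ(V_1)$ from having image $\GL_2(\K)$ on $V/V_1$, in which case $\Pr_2(N_G^\circ(V_1))=2$ and there is no Pr\"ufer-rank clash with $L$. (You also write $C_G^\circ(V_1)=N_G^\circ(V_1)$ without justification; these differ in general.) In the non-TI branch your invocation of Proposition~\ref{p:Pr2=1:LORO} to force $C_G^\circ(v_1)$ irreducible fails for the same reason, and the claim that $C$ and $C^g$ ``share the central involution'' is unjustified: $i$ and $i^g$ both centralise $v_1$ but need not coincide. So even the clean bad-group finish does not cover all inductive outcomes; in particular $C_G^\circ(v_1)\simeq\SL_3$ (or a $\GL_2$-shaped reducible group) is not excluded by your argument.

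The paper's route is quite different. It first shows (Step~2) that some involution of $L$ is central in $G$, invokes Proposition~\ref{p:Pr2=2:centralinv} to pin down the Sylow $2$-subgroup of $G$ and get $\rk T=2$, and then runs a rank-counting argument on the point stabilisers $H_+=C_G^\circ(v_+)$ and $H_-=C_G^\circ(v_-)$ (with $v_\pm\in V^{\pm_i}$), mimicking Steps~\ref{p:Pr2=1:LORO:st:H+H-}--\ref{p:Pr2=1:LORO:st:cork} of Proposition~\ref{p:Pr2=1:LORO} but with the adjustment that $H_-$ may now carry a rank~$1$ torus. This yields $\rk G\leq 6$; a further step refines to $\rk G\leq 5$, whence $G$ is an $N_\circ^\circ$-group, contradicting Proposition~\ref{p:LORO:Pr2=1}. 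The key structural input you are missing is that the final contradiction goes through $N_\circ^\circ$-ness and Proposition~\ref{p:LORO:Pr2=1}, not through a Pr\"ufer-rank obstruction inside a single reducible subgroup.
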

\begin{proof}
The proof will closely follow that of Proposition \ref{p:Pr2=1:LORO}. There are a few differences and we prefer to replicate parts of the previous argument instead of giving one early general statement in the Pr\"{u}fer rank $1$ analysis.

\begin{step}\label{p:Pr2=2:noSL2:st:H}
Any definable, connected, reducible, non-soluble subgroup $H \leq G$ with $\Pr_2(H) \leq 1$ has the form $U \rtimes C$, where $C \simeq \SL_2(\L)$ and the central involution $i \in C$ inverts the $p$-unipotent group $U$; $\rk H \neq 4, 6$. Moreover if $H$ has a rank $1$ submodule $V_1$ then $V_1 = V^{+_i} = C_V\o(H)$; if $H$ has a rank $2$ submodule $V_2$ then $U$ centralises $V_2 = V^{-_i}$.
\end{step}
\begin{proofclaim}
This is exactly the proof of Step \ref{p:Pr2=1:LORO:st:H} of Proposition \ref{p:Pr2=1:LORO} (notice the extra assumption).
\end{proofclaim}

We start a contradiction proof: suppose that $G$ contains a subgroup isomorphic to $\SL_2(\K) \times \K^\times$.

\begin{step}\label{p:Pr2=2:noSL2:st:Sylow}
Sylow $2$-subgroups of $\SL_2(\K) \times \K^\times$ are Sylow $2$-subgroups of $G$. In particular, $G$ has three conjugacy classes of involutions; $\rk T = 2$ and $C_V(S\o) = 0$.
\end{step}
\begin{proofclaim}
We first find an involution central in $G$. Set $K = \SL_2(\K) \times \K^\times$. Let $i$ be the involution in $K' \simeq \SL_2(\K)$; let $j$ be the involution in $Z\o(K) \simeq \K^\times$ and $k = ij$. By the rank $3k$ analysis we know that $K'\simeq \SL_2(\K)$ acts naturally on $V_2 = V^{-_i} \simeq \K^2$ and centralises $V_1 = V^{+_i}$. Now observe that by irreducibility of $K'$ on $V_2$, $j$ either centralises or inverts $V_2$. If $j$ centralises $V_2$ and is not central then it inverts $V_1$: and $k = ij$ inverts $V_2 + V_1 = V$. If $j$ inverts $V_2$ and is not central then it centralises $V_1$: and $k = ij$ centralises $V_2 + V_1 = V$, a contradiction. In either case there is a central involution.

By Proposition \ref{p:Pr2=2:centralinv} the Sylow $2$-subgroup of $G$ is as described. Moreover $C_V(S\o) = 0$ since the central involution inverts $V$. Finally $V^{-_i}$ is not $T$-minimal: if it is, fix some torus $\Theta$ of $K'$; since $\Theta$ acts non-trivially, $V^{-_i}$ is $\Theta$-minimal as well: a contradiction. So $\ell_T(V) = 3$ and this shows $\rk T = 2$.
\end{proofclaim}

\begin{notationinproof*}
Let $C \leq G$ be isomorphic to $\SL_2(\K)$ and $i \in C$ be the central involution.
\end{notationinproof*}

\begin{notationinproof*}
Let $v_+ \in V^{+_i}\setminus\{0\}$ and $v_- \in V^{-_i} \setminus\{0\}$. Set $H_+ = C_G\o(v_+)$ and $H_- = C_G\o(v_-)$.
\end{notationinproof*}

\begin{step}[cf. Step \ref{p:Pr2=1:LORO:st:H+H-} of Proposition \ref{p:Pr2=1:LORO}]
Both $H_+$ and $H_-$ have corank $2$ or $3$ but not both have corank $3$. Moreover $H_+ \simeq U_+ \rtimes C$ where $U_+$ is a $p$-unipotent group inverted by $i$ and $\rk H_+ \neq 4, 6$; whereas $H_- = U_- \rtimes \Theta$ where $U_-$ is a $p$-unipotent group and $\Theta$ is a good torus of rank at most $1$.
\end{step}
\begin{proofclaim}
Since $C_V(S\o) = 0$ by Step \ref{p:Pr2=2:noSL2:st:Sylow}, any centraliser $C_G(v)$ with $v \in V\setminus\{0\}$ has Pr\"{u}fer rank at most $1$. This deals with $H_+$ and we turn to $H_-$.

We claim that $H_-$ has a connected Sylow $2$-subgroup. Suppose not: say $\tau \cdot \<w\> \leq H_-$ is a $2$-subgroup with $w \notin \tau \simeq \Z_{2^\infty}$. Then by connectedness of $H_-$ and torality principles, $w$ inverts $\tau = [\tau, w]$. Conjugating in $G$ into $S$, the involution $j \in \tau$ is a $G$-conjugate of $i$. But with a Frattini argument we may assume that $i$ normalises $\tau \cdot \<w\>$, so $[i, j] = 1$. By the structure of the Sylow $2$-subgroup of $G$, we find $i \in H_-$: a contradiction.

It follows that $v_+$ and $v_-$ are not $G$-conjugate. Also, connectedness of the Sylow $2$-subgroup of $H_-$ easily proves solubility: otherwise use induction on irreducible subgroups on the one hand and the structure of reducible subgroups (Step \ref{p:Pr2=2:noSL2:st:H}) on the other hand to find a contradiction. Finally, since $\rk T = 2$, good tori in $H_-$ have rank at most $1$.
\end{proofclaim}

\begin{step}
$\rk G \leq 6$.
\end{step}
\begin{proofclaim}
Let $x, y \in G$ be independent generic elements.

If $U_-$ centralises a rank $2$ module $V_2 \leq V$ then $(U_-\cap U_-^x)\o$ centralises $V_2 + V_2^x = V$, so $(U_-\cap U_-^x)\o = 1$. In that case $H_-$ can intersect at most over a toral subgroup, which has rank at most $1$: hence $\rk G \leq 2 \cork H_- + 1$. Notice that if we are not done then $\cork H_- = 3$, forcing $\cork H_+ = 2$.

If $U_-$ centralises a rank $1$ module $V_1 \leq V$ then $H_-$ normalises it; so $(H_-\cap H_-^x\cap H_-^y)\o$ contains no unipotence and is at most a toral subgroup of rank at most $1$; now $\rk G \leq 3 \cork H_- + 1$. If we are not done, then either $\cork H_- = 3$, in which case $\cork H_+ = 2$, or $\cork H_- = 2$ and $\rk G = 7$. In the latter case, $\rk H_+ \neq 4, 6$ forces $\cork H_+ = 2$ again.

The end of the argument is exactly like in Step \ref{p:Pr2=1:LORO:st:H} of Proposition \ref{p:Pr2=1:LORO}.
%
%
\end{proofclaim}

\begin{step}
Contradiction.
\end{step}
\begin{proofclaim}
If $\rk G = 6$ then $\rk H_+ = 3$ and $\cork H_+ = 3$; $v_+^G$ is generic in $V$. The argument for Step \ref{p:Pr2=1:LORO:st:contradiction} of Proposition \ref{p:Pr2=1:LORO} cannot be used (we leave it to the reader to see why).
But $\rk H_- = 4$, so for generic $x \in G$, $(H_-\cap H_-^x)\o$ has rank at least $2$: it contains a non-trivial unipotent subgroup $Y$. If $U_-$ centralises a rank $2$ module $W_2$ then $Y$ centralises $W_2 + W_2^x = V$, a contradiction. Hence $W_1 = C_V\o(U_-)$ has rank exactly $1$.

Still assuming $\rk G = 6$, let us show that $W_1$ is TI. Otherwise let $w_1 \in W_1\cap W_1^g \setminus\{0\}$ for $g \notin N_G(W_1) \geq N_G(U_-)$. Then $L = C_G\o(w_1) \geq \<U_-, U_-^g\>$ has rank $4$ and $U_-$ has rank $3$; $L$ is clearly soluble. If $L > U_p(L)$ then $U_- = U_p(L) = U_-^g$, a contradiction. If $L = U_p(L)$ then $C_V\o(L) \neq 0$, showing $W_1 = C_V\o(L) = W_1^g$, a contradiction again.

But always under the assumption that $\rk G = 6$, $C_G\o(v_+) = H_+ \simeq \SL_2(\K)$ so $W_1^G$ may not intersect $v_+^G$. Therefore $W_1^G$ is not generic, showing that $N = N_G\o(W_1)$ has corank $1$. By Hrushovski's Theorem, $G$ has a (necessarily non-soluble by Proposition \ref{p:reductions}) normal subgroup of corank $1, 2$, or $3$ contained in $N$; because $G$ contains $H_+ \simeq \SL_2(\K)$ which does not normalise $W_1$, the corank is $3$. So $G$ has either a normal bad subgroup of rank $3$, or a normal copy of $\pSL_2(\L)$. Using $2$-tori of automorphisms, every case quickly leads to a contradiction.

Hence $\rk G \leq 5$, proving that $G$ is an $N_\circ^\circ$-group: against Proposition \ref{p:LORO:Pr2=1}.
\end{proofclaim}

There are therefore no definable copies of $\SL_2(\K) \times \K^\times$ inside $G$.
\end{proof}

\subsubsection{Strongly Embedded Methods 1: Removing $\PSL_2(\K) \times \K^\times$}

Before reading the next proposition, remember that the case of $\PSL_2(\K) \times \K^\times$ was dealt with in Proposition \ref{p:reductions}.

Also recall from \cite[\S I.10.3]{ABCSimple} that a strongly embedded subgroup of a group $G$ is a definable, proper subgroup $H < G$ containing an involution, but such that $H\cap H^g$ contains no involution for $g \notin H$. By \cite[Theorem 10.19]{BNGroups} it actually suffices to check that $H$ contains the normaliser of a Sylow $2$-subgroup $S$ of $G$, and that for any involution $i \in S$ one has $C_G(i) \leq H$. Moreover if $H < G$ is strongly embedded in $G$ then $G$ conjugates its involutions.

\begin{proposition}\label{p:Pr2=2:noPSL2}
Suppose that $\Pr_2(G) = 2$. Then $G$ contains no definable copy of $\PSL_2(\K) \times \K^\times$.
\end{proposition}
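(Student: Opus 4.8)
The plan is to show that the presence of $\PSL_2(\K) \times \K^\times$ makes $G$ definably linear, so that Step \ref{p:reductions:st:RG} of Proposition \ref{p:reductions} takes over. So suppose for contradiction that $G$ contains $K = \PSL_2(\K) \times \K^\times$, and first determine the action of the factor $\PSL_2(\K)$ on $V$. Being non-soluble it has $\ell_{\PSL_2(\K)}(V) \leq 2$ (otherwise it centralises each of three minimal factors and is soluble), and one checks that $\mathrm{char}\,\K \neq 2$, the exponent-$2$ case being incompatible with the Sylow structure forced by $\Pr_2(G) = 2$; in odd characteristic length $2$ is impossible too, since a simple group acting faithfully on a rank $2$ module would, by the rank $2$ analysis, embed into $\SL_2$ or $\GL_2$ — which $\PSL_2(\K)$ does not — and would therefore centralise both factors and be soluble. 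Hence $V$ is $\PSL_2(\K)$-minimal; by the rank $3k$ analysis $\rk \K = 1$ and $\PSL_2(\K)$ acts in its (absolutely irreducible) adjoint action, so identifying $V$ with $\K^3$ the image of $\PSL_2(\K)$ is $\mathrm{SO}_3(\K)$. By Schur's lemma $\End_{\PSL_2(\K)}(V) = \K$, whence the commuting factor $\K^\times$ acts on $V$ by scalars; indeed $C_G(\PSL_2(\K))$, acting by scalars, coincides with the full scalar subgroup $\K^\times$, of rank $1$.

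The core of the argument is to deduce that $G$ acts $\K$-linearly on $V$. If $\PSL_2(\K) \trianglelefteq G$, this is immediate: $G$ then acts on $\End_{\PSL_2(\K)}(V) = \K$ by definable ring automorphisms, of which there are none infinite, so $G$ centralises the scalars $\K^\times$ and $G \hookrightarrow \GL_3(\K)$. When $\PSL_2(\K)$ is not normal — this is where the real work lies — I would pass to the normal closure $M = \langle\PSL_2(\K)^G\rangle$: a definable, connected, perfect, non-soluble subgroup acting irreducibly on $V$. If $M < G$, the theorem applies to $M$: it is not a bad group of rank $3$ (it strictly contains $\PSL_2(\K)$, and bad groups have no involutions), and being perfect it is neither $\PSL_2(\L) \times Z$ nor $\SL_3(\L) \ast Z$ with infinite centre, so $M = \PSL_2(\K)$ (contradicting non-normality) or $M$ contains $\SL_3(\L)$ — and then $\SL_3(\L) \trianglelefteq G$, which together with the scalars $\K^\times \leq G$ yields $\GL_3(\L) \leq G$, already absurd since $\Pr_2(\GL_3(\L)) = 3 \neq 2$. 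The residual possibility $M = G$ is dealt with by the same mechanism applied to the subgroups generated by two distinct conjugates of $\PSL_2(\K)$, which have rank $\geq 4$ and, when proper, are classified by the theorem and again produce a copy of $\SL_3(\L)$, hence the same Prüfer-rank contradiction.

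Granting that $G$ is definably linear, Step \ref{p:reductions:st:RG} of Proposition \ref{p:reductions} identifies $G$ with a closed, irreducible subgroup of $\GL_3(\K)$. The connected irreducible closed subgroups of $\GL_3(\K)$ of Prüfer $2$-rank $2$ are exactly $\SL_3(\K)$ and $\mathrm{SO}_3(\K) \times \K^\times$. The first does not contain $\PSL_2(\K) \times \K^\times$: in $\SL_3(\K)$ the centraliser of $\mathrm{SO}_3(\K)$ is the finite group $\mu_3$. The second is precisely the first alternative of the theorem, already settled in Proposition \ref{p:reductions}. Either way we reach a contradiction, so $G$ contains no copy of $\PSL_2(\K) \times \K^\times$. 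The whole difficulty is concentrated in the linearisation — the non-normal case of the second paragraph — and the Prüfer-rank inequality $\Pr_2(\GL_3) = 3 > 2 = \Pr_2(G)$ is the recurring source of contradiction that makes the descent terminate.
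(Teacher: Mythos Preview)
Your overall strategy---use the adjoint action and Schur's lemma to get scalars, then linearise $G$ and finish by inspection---is quite different from the paper's, which instead takes a minimal counterexample $H < G$, computes centralisers of involutions and of the elements $i\alpha$, and builds a strongly embedded subgroup in the quotient $G/\langle\alpha^2\rangle$ to force an element of Weyl-group order $3$, contradicting Proposition \ref{p:Pr2=2:centralinv}. Your approach is more conceptual and would be attractive if it worked, but it has a genuine gap.

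The gap is in the case $M = G$. You propose to look at $\langle \PSL_2(\K), \PSL_2(\K)^g\rangle$ for $g \notin N_G(\PSL_2(\K))$ and apply the theorem inductively \emph{when this group is proper}. But you never exclude the possibility that $\langle \PSL_2(\K), \PSL_2(\K)^g\rangle = G$ for \emph{every} such $g$; in that situation your induction never fires and you have produced no $\SL_3(\L)$ and no linearisation. This is exactly the configuration the paper's strongly-embedded argument is designed to kill, and it is not a formality: a priori nothing prevents $G$ from being ``$2$-generated by conjugates'' of $\PSL_2(\K)$. Your phrase ``when proper'' quietly concedes the issue without resolving it.

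By contrast, the pieces of your argument that do go through are fine: the identification of the $\PSL_2(\K)$-action as adjoint via the rank $3k$ analysis, the Schur-type claim that the commuting $\K^\times$ acts by scalars, and the Pr\"ufer-rank count showing that $\SL_3(\L)$ together with the central involution $-I \in \K^\times$ (which lies outside $\SL_3(\L)$ since $\det(-I) = -1$ in odd characteristic) forces $\Pr_2(G) \geq 3$. The case $M < G$ is also handled correctly, since $\SL_3(\L) = M'$ is characteristic in $M$ hence normal in $G$. So the architecture is sound; what is missing is precisely the mechanism that replaces the paper's strong-embedding step when two conjugates already generate $G$.
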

\begin{proof}

\begin{notationinproof*}
Let $H \leq G$ be isomorphic to $\PSL_2(\K) \times \K^\times$.
\end{notationinproof*}

If $H = G$ then we contradict Proposition \ref{p:reductions}: hence $H$ is proper.
So $H < G$; any extension of $H$ is irreducible; since we are after a contradiction, we may suppose $G$ to be a minimal counter-example: $H$ is then a definable, connected, proper, maximal subgroup.
We shall prove that $H$ is strongly embedded in $G$, which will be close to the contradiction.

\begin{notationinproof*}
Let $\hat{\Theta} = \Theta \rtimes \<w\>$ be a Sylow $2$-subgroup of $H' \simeq \PSL_2(\K)$ and $i$ be the involution in $\Theta$; we may assume $\Theta \leq T$.
\end{notationinproof*}

Since the action of $H'$ on $V$ is known to be the adjoint action by the rank $3k$ analysis, we note that $V^{+_i} = C_V(\Theta) \leq V^{-_w}$. Besides $\ell_T(V) = 3$ for the same reason as in Step \ref{p:Pr2=2:noSL2:st:Sylow} of Proposition \ref{p:Pr2=2:noSL2}, so $\rk T = 2$ and $T \leq H$.
Moreover, since the action of $H'$ is irreducible, the involution in $Z(H) \simeq \K^\times$ inverts $V$ and is central in $G$.
As a consequence of Proposition \ref{p:Pr2=2:centralinv}, a Sylow $2$-subgroup of $H$ is one of $G$ as well. But no subquotient of the Sylow $2$-subgroup of $H$ is isomorphic to the Sylow $2$-subgroup of $\SL_2(\L)$; as a consequence, $G$ has no subquotient isomorphic to $\SL_2(\L)$.

\begin{step}\label{p:Pr2=2:noPSL2:st:Ci}
$C_G\o(i) = T \leq H$ (and likewise for $w$ and $iw$ with another torus).
\end{step}
\begin{proofclaim}
By $H'$-conjugacy it suffices to deal with $i$. First suppose that $C_G\o(i)$ is non-soluble. By reducibility, $C_G\o(i)$ has a subquotient isomorphic to $\SL_2(\L)$: against our observations on the Sylow $2$-subgroup.
Hence $C_G\o(i)$ is soluble, say $C_G\o(i) = U \rtimes T$. Now $U$ normalises both $V^{+_i}$ (which has rank $1$) and $V^{-_i}$, so $\rk C_V\o(U) \geq 2$ and $V^{+_i} \leq C_V\o(U)$. But $w$ centralises $i$ so it normalises $U$: hence $w$ normalises $V/C_V\o(U)$. Since $w$ inverts $\Theta \leq T$ and there are no field automorphisms in our setting, $\Theta$ centralises $V/C_V\o(U)$. This shows $V \leq C_V\o(U) + C_V(\Theta) = C_V\o(U) + V^{+_i} = C_V\o(U)$, and therefore $U = 1$.
\end{proofclaim}

\begin{notationinproof*}
Let $\alpha \in Z(H) \simeq \K^\times$ have minimal order with $\alpha \notin Z(G)$.
\end{notationinproof*}

This certainly exists as $Z(G)$ is finite by Proposition \ref{p:reductions}.
By maximality of $H$, $C_G\o(\alpha) = H$ and $C_G\o(\alpha^2) = G$; moreover $(i\alpha)^2 \neq 1$.

\begin{step}\label{p:Pr2=2:noPSL2:st:Cialpha}
$C_G\o(i\alpha) = T$ (and likewise for $w\alpha$ and $iw\alpha$ with another torus).
\end{step}
\begin{proofclaim}
By $H'$-conjugacy it suffices to deal with $i\alpha$. If $C_G\o(i\alpha)$ is non-soluble, then by induction it must be reducible, and $G$ has a subquotient isomorphic to $\SL_2(\L)$: a contradiction.
Hence $C_G\o(i\alpha)$ is soluble, say $C_G\o(i\alpha) = U \rtimes T$. Now $i$ normalises $U$, so by Step \ref{p:Pr2=2:noPSL2:st:Ci}, $i$ inverts $U$. But so do $w$ and $iw$: therefore $U = 1$.
\end{proofclaim}

\begin{step}
Contradiction.
\end{step}
\begin{proofclaim}
Let $\overline{G} = G/\<\alpha^2\>$ and denote the image of $g \in G$ by $\overline{g}$.
First, by Proposition \ref{p:Pr2=2:centralinv} and the connectedness of centralisers of decent tori \cite{ABAnalogies}, $N_G(S) \leq N_G(S\o) = C_G(S\o) \cdot S \subseteq C_G\o(i) \cdot S \subseteq H$, which goes to quotient modulo $\<\alpha^2\>$ so that $N_{\overline{G}}(\overline{S}) \leq N_{\overline{G}}(\overline{S}\o) \leq \overline{H}$.

By Steps \ref{p:Pr2=2:noPSL2:st:Ci} and \ref{p:Pr2=2:noPSL2:st:Cialpha}, for any involution $\overline{\ell} \neq \overline{\alpha}$ in $\overline{S}$, one has $C_{\overline{G}}\o(\overline{\ell}) = \overline{T} \leq \overline{H}$; by construction, $C_{\overline{G}}\o(\overline{\alpha}) = \overline{H}$. Be careful that checking connected components does not suffice for strong embedding.

But by torality principles, $\overline{\ell}$ is $\overline{H}$-conjugate to an involution in $\overline{S}\o$, so we may assume $\overline{\ell} \in \overline{S}\o$; then by a Frattini argument, $C_{\overline{G}}(\overline{\ell}) \subseteq C_{\overline{G}}\o(\overline{\ell})\cdot N_{\overline{G}}(\overline{S}\o)$; now $N_{\overline{G}}(\overline{S}\o) = C_{\overline{G}}(\overline{S}\o)\cdot \overline{S}$ by Proposition \ref{p:Pr2=2:centralinv} again, so using the connectedness of centralisers of decent tori one more time:
\[C_{\overline{G}}(\overline{S}\o) \leq C_{\overline{G}}\o(\overline{i}) = \overline{T} \leq \overline{H}\]
This shows $C_{\overline{G}}(\overline{\ell}) \leq \overline{H}$ and the whole paragraph also applies to $\overline{\ell} = \overline{\alpha}$.

Hence $\overline{H}$ is strongly embedded all right and $\overline{G}$ conjugates its involutions. This induces an element of order $3$ in the Weyl group of $\overline{G}$ and of $G$ as well: a contradiction.
\end{proofclaim}

There are therefore no definable copies of $\PSL_2(\K) \times \K^\times$ inside $G$.
\end{proof}

\subsubsection{Strongly Embedded Methods 2: Classical Involutions}

\begin{proposition}\label{p:Pr2=2:classical}
If $\Pr_2(G) = 2$ then all involutions in $G$ satisfy $C_G\o(\ell) \simeq \GL_2(\K)$.
\end{proposition}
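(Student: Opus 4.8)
The plan is to fix an involution and, by torality, reduce to the case $\ell \in S\o \leq T$, so that $T \leq M := C_G\o(\ell)$ and hence $\Pr_2(M) = 2$. The first task is to understand the action of $\ell$ on $V$: faithfulness rules out $\ell$ centralising $V$, and if $\ell$ inverted $V$ then, $C_V(\ell)$ and $[V,\ell]$ being $G$-submodules with $[V,\ell] \neq 0$, $G$-minimality would force $C_V(\ell) = 0$, that is $\ell = -\Id_V \in Z(G)$. So I would begin by excluding a central involution $z$: such a $z$ equals $-\Id_V$, the other two involutions of $S\o$ then have complementary non-trivial signatures on $V$, and running the reducible analysis below on a non-central one — say $m$ with $\rk V^{+_m} = 1$ — either yields a contradiction or identifies $C_G\o(m) \simeq \GL_2(\K)$; in the latter case $z \in S\o \leq C_G\o(m)$ is a central involution of $\GL_2(\K)$, hence its scalar involution, which lies in $\SL_2(\K) \leq C_{C_G\o(m)}(V^{+_m})$ and so acts trivially on the rank-$1$ summand $V^{+_m}$, whereas $-\Id_V$ inverts it — a contradiction since $p \neq 2$. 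Hence no involution of $G$ is central; every involution has signature $(1,2)$ or $(2,1)$, and after renaming the two eigenspaces I write $V_1 = V^{+_\ell}$ of rank $1$ and $V_2 = V^{-_\ell}$ of rank $2$, both $M$-submodules, with $M$ acting faithfully on $V = V_1 \oplus V_2$ since $C_M(V) \leq C_G(V) = 0$.

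The delicate point is to prove that $M$ is non-soluble — equivalently, since a connected group acting faithfully on the rank-$1$ module $V_1$ of exponent $p$ is a good torus (Zilber's Field Theorem together with Wagner's Torus Theorem forbid a non-trivial $p$-unipotent part), that the image of $M$ in $\GL(V_2)$ is non-soluble. Were it soluble, $C_G(\ell)$ would be soluble-by-finite; a corank estimate on $\ell^G$ then bounds $\rk G$, and for the surviving small ranks Wiscons' analysis (applicable because $G$ has involutions), Proposition \ref{p:reductions}, and a strongly embedded subgroup argument in the spirit of Proposition \ref{p:Pr2=2:noPSL2} should combine to a contradiction — here one also uses that the Weyl group permutes the three involutions of $S\o$. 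Granting that $M$ is non-soluble and reducible, the rank $2$ analysis applies to its image in $\GL(V_2)$: there is a rank-$1$ algebraically closed field $\K$ with $V_2 \simeq \K^2$ and $M/N$ isomorphic to $\SL_2(\K)$ or $\GL_2(\K)$ in the natural action, where $N := C_M(V_2)$. This $N$ is connected (non-existence of perfect central extensions of $\SL_2(\K)$, \cite{ACCentral}), acts faithfully on $V_1$, hence is a good torus of rank $\leq 1$, and is central in $M$ because a connected group acts trivially on such a torus.

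To identify the extension, observe that $M$ acts on $V_1$ through an abelian quotient — its image in $\GL(V_1)$ is a good torus — so $M' \leq C_M(V_1)$ and therefore $M' \cap N \leq C_M(V_1) \cap C_M(V_2) = C_M(V) = 0$. The image of $M'$ in $M/N$ is the derived subgroup $\SL_2(\K)$, and the kernel $M' \cap N$ is trivial, so $M' \simeq \SL_2(\K)$. If $M/N \simeq \SL_2(\K)$, then $M = M'N$ (perfect quotient) and $N$ being central and meeting $M'$ trivially we get $M \simeq \SL_2(\K) \times N$; but then either $N = 0$ and $T \leq M \simeq \SL_2(\K)$ has torus rank $\leq 1$, contradicting $\Pr_2(T) = 2$, or $N \simeq \K^\times$ and $M \simeq \SL_2(\K) \times \K^\times$, contradicting Proposition \ref{p:Pr2=2:noSL2} — so this case does not occur. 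Hence $M/N \simeq \GL_2(\K)$; if $N \neq 0$ then $M$ contains $M' \times N \simeq \SL_2(\K) \times \K^\times$, again against Proposition \ref{p:Pr2=2:noSL2}, so $N = 0$ and $M \simeq M/N \simeq \GL_2(\K)$, as wanted.

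I expect the main obstacle to be the non-solubility of $M$ (and the closely related exclusion of a central involution): once these are secured the identification is essentially forced by the rank $2$ analysis, the bookkeeping on the central torus $N$, the Pr\"{u}fer-rank constraint $\Pr_2(T) = 2$, and the absence of a definable $\SL_2(\K) \times \K^\times$. Non-solubility is precisely the sort of tight, ``bad unipotent centraliser''-flavoured configuration that only yields to a combination of Hrushovski's and Wiscons' rank bounds with strongly embedded subgroup methods, which is why this subsection relies on the latter.
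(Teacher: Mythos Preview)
Your identification argument (when $M = C_G\o(\ell)$ is non-soluble and $\ell$ is non-central) is essentially the paper's Step~1 and is fine: rank~$2$ analysis on $V_2$, then Proposition~\ref{p:Pr2=2:noSL2} kills $\SL_2(\K)\times\K^\times$ and forces $M\simeq\GL_2(\K)$. One minor point: in the case $M/N\simeq\GL_2(\K)$ with $N\neq 0$, you should explain why $N$ has an involution before invoking Proposition~\ref{p:Pr2=2:noSL2}; the paper instead argues that $N$ has \emph{no} involution (otherwise that involution would centralise $V_2$ and invert $V_1$, making $\ell$ times it central) and concludes $N$ is finite directly.

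The genuine gap is exactly where you flag it, but your plan for closing it is not the right one. A corank estimate on $\ell^G$ gives no useful bound on $\rk G$: a soluble $C_G\o(\ell)=U\rtimes T$ can a priori have large unipotent part, so Wiscons' analysis does not apply. The paper's route is quite different and hinges on Proposition~\ref{p:LORO:Pr2=1}, which you do not mention. The key observation is: if every non-central involution is meek, then $G$ has no definable $\SL_2(\K)$ subgroup (its central involution would be non-meek, yet cannot be the central involution of $G$ since $-\Id_V$ contradicts the rank $3k$ analysis), and with more care no $\SL_2$ \emph{subquotient} either; combined with Proposition~\ref{p:Pr2=2:noPSL2} this forces $G$ to be an $N_\circ^\circ$-group, and Proposition~\ref{p:LORO:Pr2=1} then gives $\Pr_2(G)=1$, a contradiction. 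This single argument disposes both of a central involution (your first worry) and of the case where all three involutions of $S\o$ are meek. The paper then still needs a separate strongly embedded argument, but for a different purpose than you envisage: it shows that \emph{at least two} involutions of $S\o$ are non-meek (if only $k$ is, one builds $H=N_G(V^{\pm_k})\geq \langle C_G\o(i),C_G\o(j),C_G\o(k)\rangle$ and proves it strongly embedded, forcing conjugacy of $i$ and $k$). Two non-meek involutions give two transpositions in the Weyl group acting on $\{i,j,k\}$, hence transitivity --- so the Weyl transitivity is the \emph{output}, not an input as you suggest.
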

\begin{proof}

Call an involution $i \in G$ \emph{meek} if $C_G\o(i)$ is soluble.

\begin{step}\label{p:Pr2=2:SL3:st:centralisers}
If an involution $i \in G$ is neither meek nor central, then $C_G\o(i) \simeq \GL_2(\K)$.
\end{step}
\begin{proofclaim}
Let $C = C_G\o(i)$. Since $i$ is not central in $G$, it does not invert $V$: we get a decomposition $V = V_1 \oplus V_2$ where $\rk V_r = r$, and both are $C$-invariant. Set $D = C_C(V_2)$. Now $D$ is faithful on $V_1$, so it is abelian-by-finite. By assumption $C$ is not soluble, so by connectedness $C/D$ is not either. By the rank $2$ analysis, $C/D \simeq \SL_2(\K)$ or $C/D\simeq \GL_2(\K)$ in their natural actions on $V_2\simeq \K^2$.

First suppose that $C/D \simeq \SL_2(\K)$. Then $(C/D\o)/(D/D\o) \simeq C/D \simeq \SL_2(\K)$ so by \cite[Theorem 1]{ACCentral}, $D = D\o$. Notice that $D\o$ contains a $2$-torus of rank $1$; by Zilber's Field Theorem, $D \simeq \L^\times$ for some field structure $\L$ of rank $1$ in the action on $V_1 \simeq \L_+$. Let $E = C_C\o(V_1)$; since $\cork_C(E) = 1 = \rk D$, one finds $C = E \times D$, against Proposition \ref{p:Pr2=2:noSL2}.

Now suppose that $C/D \simeq \GL_2(\K)$. Then $D\o$ has no involutions, so it centralises $V_1$: $D$ is therefore finite. Since $\SL_2(\K) \simeq (C/D)' = C'D/D \simeq C'/(C'\cap D)$, \cite[Theorem 1]{ACCentral} again forces $C'\simeq \SL_2(\K)$. Moreover:
\[\K^\times \simeq \GL_2(\K)/\SL_2(\K) \simeq (C/D)/(C'D/D) \simeq C/C'D \simeq (C/C')/(C'D/C')\]
so a finite quotient of, and therefore $C/C'$ itself, is definably isomorphic to $\K^\times$.
Finally let $\Theta \leq C$ be a maximal good torus: $C = C'\cdot \Theta = C' \ast C_{\Theta}(C') = C' \ast Z\o(C)$ where the intersection is a subgroup of $Z(C') \simeq \Z/2\Z$.
By Proposition \ref{p:Pr2=2:noSL2}, the intersection is not trivial, so that $C \simeq \GL_2(\K)$.
\end{proofclaim}

\begin{step}\label{p:Pr2=2:SL3:st:nocentral}
There is no central involution.
\end{step}
\begin{proofclaim}
Suppose there is a central involution, say $k$, and let $i, j$ be the other two involutions in $S\o$. Of course $C_G\o(i) = C_G\o(j)$.
If $i$ and $j$ are not meek then by Step \ref{p:Pr2=2:SL3:st:centralisers}, $C_G\o(i) = C_G\o(j) \simeq \GL_2(\K)$, which has only one central involution, a contradiction.
Hence $i$ and $j$ are both meek.

As a consequence, $G$ has no definable subgroup isomorphic to $\SL_2(\K)$: for if $H$ is one such then the central involution in $H$ cannot be meek, so it is $k$; but $k$ inverts $V$, against the rank $3k$ analysis.

We claim that $G$ actually has no definable subquotient isomorphic to $\SL_2(\K)$. Suppose $H/K \simeq \SL_2(\K)$ is one. If $K$ has no involutions, then like in Step \ref{p:Pr2=2:noSL2:st:H} of Proposition \ref{p:Pr2=2:noSL2}, we may lift $H/K$ to a genuine copy of $\SL_2(\K)$ inside $H$: a contradiction. So $K$ does have involutions; as we argued a number of times, $K$ is connected and soluble, so we find $K = U \rtimes \Theta$ with $\Theta$ a good torus of Pr\"{u}fer $2$-rank $1$.
Now by the conjugacy of good tori in $K$, $H = N_H(\Theta)\cdot U$ and $N_H(\Theta)/N_K(\Theta) \simeq H/K$, so we may assume $\Theta$ to be normal, and therefore central, in $H$. The involution in $\Theta$ must then be $k$.
If there is a rank $1$, $H$-minimal module $V_1 \leq V$, then $C_H(V_1) < H$ has corank $1$; we find $H = C_H(V_1) \cdot \Theta$ and $C_H\o(V_1)/C_K(V_1) \simeq H/K \simeq \SL_2$, but $C_K(V_1)$ now has no involutions: we are done. If there is a rank $2$, $H$-minimal module $V_2 \leq V$ then we argue similarly with $C_H(V/V_2)$. If $V$ is $H$-minimal then we use induction and find $H \simeq \PSL_2(\L) \times \L^\times$, which is against having a subquotient isomorphic to $\SL_2(\K)$.

It follows from Proposition \ref{p:Pr2=2:noPSL2} that $G$ is an $N_\circ^\circ$-group, and Proposition \ref{p:LORO:Pr2=1} forces $\Pr_2(G) = 1$, a contradiction.
\end{proofclaim}

\begin{step}
$S\o$ contains (at least) one involution $k$ with $C_G\o(k) \simeq \GL_2(\K)$.
\end{step}
\begin{proofclaim}
This is a proper subset of the previous argument.
\end{proofclaim}

\begin{step}
$S\o$ contains (at least) two involutions $k \neq \ell$ with $C_G\o(k) \simeq \GL_2(\K)$ and $C_G\o(\ell) \simeq \GL_2(\L)$.
\end{step}
\begin{proofclaim}
If there is exactly one, say $k$, then the other two, say $i$ and $j$, are meek. We shall construct a strongly embedded subgroup.

Immediately notice that the Weyl group of $C_G\o(k) \simeq \GL_2(\K)$ gives rise to a $2$-element $w$ exchanging $i$ and $j$ but fixing $k$.
Let $C_G\o(i) = U_i \rtimes T$ and $C_G\o(j) = U_j \rtimes T$. Notice that $U_i$ normalises both $V^{+_i}$ and $V^{-_i}$, so $\rk C_V(U_i) \geq 2$.

First suppose that $C_V(U_i) \neq C_V(U_j)$. Then $k$ may not invert $C_V(U_i)$ since applying $w$, it would invert $C_V(U_j)$ as well and therefore invert all of $C_V(U_i) + C_V(U_j) = V$, against Step \ref{p:Pr2=2:SL3:st:nocentral}. Since $V^{+_k}$ has rank $1$, we find $V^{+_k} \leq (C_V(U_i)\cap C_V(U_j))\o$ and equality follows. So $H = N_G(V^{+_k})$ contains $\<C_G\o(k), C_G\o(i), C_G\o(j)\>$.

Now suppose that $C_V(U_i) = C_V(U_j)$. Observe from $C_G\o(k) \simeq \GL_2(\K)$ that $\Theta = [T, w]$ contains $k$. Now $w$ inverts $\Theta$ which normalises $V/C_V(U_i)$, and therefore $\Theta$ centralises $V/C_V(U_i)$. Hence $V \leq C_V(U_i) + V^{+_k}$. If $U_i \neq 1$, then $k$ inverts $C_V(U_i)$, showing $C_V\o(U_i) = V^{-_k}$. In that case, $H = N_G(V^{-_k})$ contains $\<C_G\o(k), C_G\o(i), C_G\o(j)\>$; notice that this is also true if $U_i = 1$.

We claim that $H$ is strongly embedded in $G$.

Let us first show that $C_G(k)$ is connected. Let $c \in C_G(k)$; lifting torsion, we may suppose $c$ to have finite order (as a matter of fact, by Steinberg's Torsion Theorem \cite{DSteinberg} $c$ may be taken to be a $2$-element). Then $c$ induces an automorphism of $H_k = (C_G\o(k))' \simeq \SL_2(\K)$, so by \cite[Theorem 8.4]{BNGroups}, $c \in H_k \cdot C_G(H_k)$. Now fix any algebraic torus $\Theta$ of $H_k$: by connectedness of centralisers of tori \cite{ABAnalogies}, $C_G(H_k) \leq C_G(\Theta) = C_G\o(\Theta) \leq C_G\o(k)$. This shows $C_G(k) = C_G\o(k)$.
As a consequence, $N_G(S) \leq C_G(k) \leq H$.

Now let $\ell \in S$: we show $C_G(\ell) \leq H$. Notice that $\ell \in C_G(k) = C_G\o(k) \simeq \GL_2(\K)$, so conjugating in $C_G\o(k)$ we may suppose $\ell = i$ or $\ell = k$. The latter case is known since $C_G(k)$ is connected. So we may suppose $\ell = i$. But if $c \in C_G(i)\setminus C_G\o(i)$, lifting torsion and using Steinberg's Torsion Theorem we may suppose $c$ to be a $2$-element. By a Frattini argument, $c$ normalises some maximal $2$-torus $\Sigma\o \leq C_G\o(i)$. Let $\kappa$ be the non-meek involution in $\Sigma\o$; since $S\o \leq C_G\o(i)$, $\kappa$ and $k$ are conjugate in $C_G\o(i)\leq H$, say $\kappa = k^h$. Now $c$ centralises $\kappa$ so $c \in C_G(\kappa) = C_G(k)^h \leq H$: we are done.

Since $G$ has a strongly embedded subgroup, it conjugates its involutions: so $i$ is conjugate to $k$, against meekness.
\end{proofclaim}

Finally let $i, j \in S\o$ have centralisers$\o$ isomorphic to $\GL_2(\K)$ and $\GL_2(\L)$. Then $C_G\o(i)$ and $C_G\o(j)$ give rise to two distinct transpositions on the set of involutions of $S\o$, meaning that the Weyl group is transitive on the set of involutions of $S\o$. As a consequence, $i, j$, and $k = ij$ are conjugate.
\end{proof}

\subsubsection{Der Nibelungen Ende}

\begin{proposition}
If $\Pr_2(G) = 2$ then $G\simeq \SL_3(\K)$ in its natural action on $V \simeq \K^3$.
\end{proposition}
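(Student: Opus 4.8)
The plan is to equip $V$ with a $\K$-vector space structure for which a subgroup isomorphic to $\SL_3(\K)$ acts as $\SL(V)$, and then to recover all of $G$ from that subgroup. I would begin with the involution geometry. By Proposition~\ref{p:Pr2=2:classical} the three involutions $i$, $j$, $k=ij$ of $S\o$ are conjugate, all with $C_G\o(\ell)\simeq\GL_2(\K)$, and the Weyl group permutes them transitively; in particular the map $\ell\mapsto C_G\o(\ell)$ is injective, which forces no two of them to have the same signature on $V$. Writing $V=V^{+_\ell}\oplus V^{-_\ell}$ with $\rk V^{+_\ell}=1$, and using that $i$, $j$, $k$ commute pairwise (so each preserves the eigenspaces of the others), one checks that $V^{-_i}=V^{+_j}\oplus V^{+_k}$, hence $V=L_i\oplus L_j\oplus L_k$ with each $L_\ell:=V^{+_\ell}$ of rank~$1$: the three coordinate axes. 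For each $\ell$ the derived subgroup $C_\ell:=C_G\o(\ell)'\simeq\SL_2(\K)$ acts on the plane $V^{-_\ell}$ as on its natural module (by the rank~$3k$ analysis) and centralises $L_\ell$, since $\GL_2(\K)\to\mathrm{GL}(L_\ell)$ factors through $\GL_2(\K)/\SL_2(\K)$.

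The second step glues these planes into a global structure. Each $C_\ell$ makes $V^{-_\ell}\simeq\K^2$, hence endows each of its two coordinate axes with a structure of $1$-dimensional $\K$-module (reconstructed from the torus action via Zilber's Field Theorem); any two of the $C_\ell$'s share exactly one axis, on which the two resulting field structures coincide because $\K$, having Morley rank $1$, is algebraically closed and carries no definable field automorphism. Patching the three planes, $V$ becomes a $\K$-vector space with $V\simeq\K^3$ in which $C_i$, $C_j$, $C_k$ are three ``block'' copies of $\SL_2(\K)$ inside $\GL_3(\K)$. Set $G_0:=\langle C_i,C_j,C_k\rangle$; by Zilber's Indecomposability Theorem it is definable and connected, and since any two block $\SL_2$'s contain a full set of root subgroups and thus generate $\SL_3(\K)=\SL(V)$, while $G$ is faithful on $V$, we get $G_0\simeq\SL_3(\K)$ acting on $V\simeq\K^3$ as on its natural module. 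Note that $i\in C_i\le G_0$, so $C_G\o(i)=C_{G_0}\o(i)$ (both of rank $4$, one contained in the other); likewise the maximal good torus $T\le C_G\o(i)$ lies in $G_0$ and is a maximal good torus of $G_0$.

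It remains to identify $G$ with $G_0\ast Z(G)$. Since the natural module is absolutely irreducible, Schur's lemma gives $C_G(G_0)\hookrightarrow\End_{G_0}(V)^\times=\K^\times$; moreover $C_G(G_0)$ must be finite, for otherwise $G_0\cdot C_G(G_0)\simeq\GL_3(\K)$ is a definably linear subgroup with non-trivial connected soluble radical, contradicting Proposition~\ref{p:reductions} if it equals $G$ and the induction hypothesis if it is proper (no proper irreducible subgroup is one of the three conclusions while having $R\o\neq1$). In particular $C_G(G_0)\le Z(G)$. Next I would show $G_0\trianglelefteq G$: if not, then $N_G(G_0)<G$ is a proper, definable, connected subgroup containing the irreducible $G_0$, hence itself irreducible and of Pr\"ufer $2$-rank $2$; by induction $N_G(G_0)\simeq\SL_3(\K)\ast(\text{finite})$, a group with a \emph{unique} subgroup isomorphic to $\SL_3(\K)$ (its derived subgroup), so the same rigidity yields $\langle G_0,G_0^g\rangle=G$ for every $g\notin N_G(G_0)$. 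One then closes with a rank count: $G_0$ is transitive on $V\setminus\{0\}$, so $\rk G=3+\rk C_G\o(v_0)$ for $v_0\neq0$, and the reducible subgroup $C_G\o(v_0)$ — which contains $C_{G_0}\o(v_0)\simeq\K^2\rtimes\SL_2(\K)$ — can be pinned down via the structure of reducible subgroups (as in Step~\ref{p:Pr2=2:noSL2:st:H} of Proposition~\ref{p:Pr2=2:noSL2}) to have rank exactly $5$, whence $\rk G=8=\rk G_0$ and, both groups being connected, $G=G_0$. Once $G_0\trianglelefteq G$, the image of the conjugation map $G\to\mathrm{Aut}(G_0)$ is inner modulo $G_0\cdot C_G(G_0)$ up to graph and field automorphisms, to which a connected group of finite Morley rank maps trivially (no definable field automorphisms, and the graph automorphism does not preserve the isomorphism type of the natural module); hence $G=G_0\cdot C_G(G_0)=G_0\ast Z(G)$.

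The main obstacle is precisely this last step: the construction of $G_0\simeq\SL_3(\K)=\SL(V)$ from the involution centralisers is essentially mechanical, but promoting ``$G$ contains the natural $\SL_3(\K)$'' to ``$G$ \emph{is} $\SL_3(\K)$ up to a finite centre'' requires controlling $N_G(G_0)$ and $C_G(G_0)$, i.e. bounding $\rk G$, which I expect to depend on a careful analysis of the reducible (vector-stabiliser) subgroups together with the induction hypothesis. An alternative route is to show directly that $G$ is definably linear — that every element of $G$ acts $\K$-linearly on $V$ — and then to invoke the linearity reduction (Step~\ref{p:reductions:st:RG} of Proposition~\ref{p:reductions}), where the inspection of irreducible algebraic subgroups of $\GL_3(\K)$ leaves only $\SL_3(\K)$ once $R\o(G)=1$ and $\Pr_2(G)=2$ are imposed.
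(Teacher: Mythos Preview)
Your overall strategy matches the paper's: exploit Proposition~\ref{p:Pr2=2:classical} to build a $\K$-vector space structure on $V$ from the $\GL_2$-centralisers, produce an irreducible linear subgroup, and then argue it is everything. The execution, however, differs in two places where your argument has genuine gaps.

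\textbf{The gluing.} You patch three planes $V^{-_i}, V^{-_j}, V^{-_k}$ and claim the induced field structures on each shared axis agree ``because $\K$ carries no definable field automorphism''. That is not quite the issue: on $L_k$ you have two a priori unrelated scalar actions, one coming from the $C_i$-module structure on $V^{-_i}$ and one from the $C_j$-module structure on $V^{-_j}$; these arise from different rank~$1$ subtori of $T$ and give two field structures on $L_k$ with the same underlying additive group, which need not coincide merely because rank~$1$ fields are rigid. The paper sidesteps this entirely: it keeps only one plane $V^{-_i}$ with its $\K$-structure from $H_i=(C_G\o(i))'$, chooses a single element $w\in H_k$ of order~$4$ swapping $i$ and $j$, and \emph{defines} the $\K$-action on $L_i=V^{+_i}$ by transport via $w$. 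Linearity of $H_i$ is then free, and linearity of $w$ is checked by a short direct computation. This produces a linear, irreducible subgroup $H=\langle H_i,w\rangle$ without any compatibility question.

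\textbf{The endgame.} Your rank count asserts that $C_G\o(v_0)$ ``can be pinned down'' to rank exactly~$5$ via Step~\ref{p:Pr2=2:noSL2:st:H} of Proposition~\ref{p:Pr2=2:noSL2}, but that step only excludes ranks~$4$ and~$6$; it does not rule out $\rk U>2$. The paper's Step~2 does precisely this missing work: writing $K=C_G\o(v)=U\rtimes C$, it splits on whether $K$ has a rank~$2$ or rank~$1$ submodule. In the rank~$2$ case a genericity argument on $V_2\cap V_2^g$ forces $\rk U\leq 3$, whence $\rk G\leq 8$; in the rank~$1$ case a triple-intersection bound gives $\rk G\leq 9$, and Hrushovski's Theorem together with \cite[Theorem 8.4]{BNGroups} finishes. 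Either way $H=\SL_3(\K)$ cannot be proper. Your normality-plus-automorphism route would also work once the rank bound is in hand, but you do need that bound.

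Incidentally, the ``alternative route'' you mention at the end --- show a subgroup is definably linear and invoke Step~\ref{p:reductions:st:RG} of Proposition~\ref{p:reductions} --- is exactly what the paper does when $H=G$; when $H<G$ it instead applies the inductive hypothesis to $H$ to recognise $\SL_3(\K)$ and then derives the contradiction above. So your instinct there is right; the paper's proof is essentially your alternative route with a clean transport-by-$w$ construction replacing the three-plane gluing.
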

\begin{proof}
As before, let $i, j, k$ be the involutions in $S\o$.

\begin{step}
There are a $\K$-vector space structure on $V$ and an irreducible subgroup $H \leq G$ which is $\K$-linear.
\end{step}
\begin{proofclaim}
Let $H_i = (C_G\o(i))' \simeq \SL_2(\K)$; define $H_j$ and $H_k$ similarly. We know how $H_i$ acts on $V$: it centralises $V^{+_i}$ and acts on $V^{-_i} = V^{+_j} \oplus V^{+_k}$ as on its natural module, meaning that there is a (partial) $\K$-vector space structure on $V^{-_i}$.
We extend it to a global vector space structure on all of $V$ as follows.

First let $w \in C_G\o(k) \simeq \GL_2(\K)$ be an element of order $4$ exchanging $i$ and $j$ while fixing $k$, and notice that we may actually take $w \in (C_G\o(k))' = H_k \simeq \SL_2(\K)$; then $w$ centralises $V^{+_k}$ and $w^2 = k$.

Let $a_i \in V^{+_i}$. Then $a_i^{w^{-1}} \in C_V(i^{w^{-1}}) = V^{+_j}$, a $\K$-vector subspace of $V^{-_i}$, so it makes sense to define:
\[\lambda \cdot a_i := \left(\lambda \cdot a_i^{w^{-1}}\right)^w\]
This clearly maps $V^{+_i}$ into itself; moreover it is additive in $a_i$ and additive and multiplicative in $\lambda$. So we have extended the vector space structure to all of $V$.

We show that $H = \<H_i, w\>$ is linear. Since $H_i$ centralises $V^{+_i}$, it clearly is linear on $V = V^{-_i} + V^{+_i}$. For $w$ we argue piecewise. Let $a_i \in V^{+_i}$ and $a_j = a_i^{w^{-1}} \in V^{+_j}$. Then, bearing in mind that $w^2 = k$ inverts $V^{-_k} = V^{+_i} + V^{+_j}$:
\[\lambda \cdot a_i^w = \lambda \cdot a_j^{w^2} = \lambda \cdot (-a_j) = - \lambda \cdot a_j = (\lambda\cdot a_j)^{w^2} = (\lambda \cdot a_i)^w\]
Let $a_j \in V^{+_j}$ and $a_i = a_j^w \in V^{+_i}$. Now:
\[\lambda \cdot a_j^w = \lambda \cdot a_i = \left(\lambda\cdot a_i^{w^{-1}}\right)^w = (\lambda \cdot a_j)^w\]
Finally, let $a_k \in V^{+_k} = C_V(H_k) \leq C_V(w)$. Then $(\lambda \cdot a_k)^w = \lambda \cdot a_k = \lambda \cdot a_k^w$. The element $w$ is linear.
\end{proofclaim}

Notice that if $H = G$ then we are done, since although we did not bother to identify $H$ explicitly, $G$ is then a linear group. Now semi-simple, linear groups in characteristic $p$ are known to be algebraic \cite[Theorem 2.6]{MStructure} (which was already used in Step \ref{p:reductions:st:RG} of Proposition \ref{p:reductions}), and we conclude by inspection.

So suppose $H < G$: we shall find a contradiction.

\begin{step}
Contradiction.
\end{step}
\begin{proofclaim}
By induction, and in view of Proposition \ref{p:Pr2=2:noPSL2}, $H \simeq \SL_3(\K)$. Up to changing the vector space structure (which should however not be necessary), $V$ is the natural $H$-module.

Fix $v \in V\setminus\{0\}$ and let $K = C_G\o(v)$. First observe that $\Pr_2 (K) \leq 1$ since $C_V(S\o) = 0$ as observed from the action of $H \simeq \SL_3(\K)$.
Moreover $K \geq C_H\o(v)$ contains a copy of $\SL_2(\K)$ as seen by inspection. If $K$ is irreducible on $V$ then by induction $K \simeq \PSL_2(\L)$, a contradiction. So $K$ is reducible; by Step \ref{p:Pr2=2:noSL2:st:H} of Proposition \ref{p:Pr2=2:noSL2} again, write $K = U \rtimes C$ with $U$ a unipotent group and $C \simeq \SL_2(\K)$; moreover $\rk K \neq 4, 6$.

First suppose that $K$ has a rank $2$ module $V_2 \leq V$. Then we know that $U$ centralises $V_2$. Let $g \in G$ be generic and $v_2 \in V_2\cap V_2^g\setminus\{0\}$. Since $H$ is transitive on $V\setminus\{0\}$, $C_G\o(v_2) = Y \rtimes D$ for some conjugates $Y, D$ of $U, C$ respectively. Yet $Y\rtimes D \geq \<U, U^g\>$, and $U\cap U^g = 1$ since the intersection centralises $V_2 + V_2^g = V$. This proves $2 \rk U \leq \rk U + \rk C$, and $\rk U \leq 3$; since $\rk K \neq 6$, one finds $\rk G \leq K + 3 \leq 8$, against $H \simeq \SL_3(\K)$ being proper.

Now suppose that $K$ has a rank $1$ module $V_1$. Then we know that $K$ centralises $V_1$; in particular, for independent and generic $x, y \in G$, the intersection $(K\cap K^x \cap K^y)\o$ is trivial: it follows $\rk G \leq 3 \cork K \leq 9$. So $H < G$ has corank $1$; by Hrushovski's Theorem $G$ has a normal subgroup of rank $\geq 6$, which certainly intersects the quasi-simple group $H \simeq \SL_3(\K)$. Hence $H$ itself is normal in $G$; now $G = H \cdot C_G\o(H)$ by \cite[Theorem 8.4]{BNGroups}, and $C_G\o(H)$ is a normal subgroup of rank $1$, contradicting Proposition \ref{p:reductions}.
\end{proofclaim}

This concludes the Pr\"{u}fer rank $2$ analysis.
\end{proof}

\subsection{The Pr\"{u}fer Rank 3 Analysis}

This is a one-liner: \cite[Theorem 1.4]{BBGroups} settles the question.
On the other hand a direct proof along the lines of the Pr\"{u}fer rank $2$ argument would certainly be possible.
In any case our Theorem is proved.\qed

\bibliographystyle{alpha}
\bibliography{RMf}

\begin{thebibliography}{ABC08}

\bibitem[AB08]{ABAnalogies}
Tuna Alt{\i}nel and Jeffrey Burdges.
\newblock On analogies between algebraic groups and groups of finite {M}orley
  rank.
\newblock {\em J. Lond. Math. Soc. (2)}, 78(1):213--232, 2008.

\bibitem[ABC08]{ABCSimple}
Tuna Alt{\i}nel, Alexandre Borovik, and Gregory Cherlin.
\newblock {\em Simple groups of finite {M}orley rank}, volume 145 of {\em
  Mathematical Surveys and Monographs}.
\newblock American Mathematical Society, Providence, RI, 2008.

\bibitem[AC99]{ACCentral}
Tuna Alt{\i}nel and Gregory Cherlin.
\newblock On central extensions of algebraic groups.
\newblock {\em J. Symbolic Logic}, 64(1):68--74, 1999.

\bibitem[Alt98]{AContributions}
Christine Altseimer.
\newblock {\em Contributions to the Classification of Tame $K^\ast$-groups of
  Odd Type and Other Applications of $2$-local Theory}.
\newblock PhD thesis, Albert-Ludwigs-Universität Freiburg, Freiburg im
  Breisgau, 1998.

\bibitem[BB08]{BBDefinably}
Alexandre Borovik and Jeffrey Burdges.
\newblock Definably linear groups of finite {M}orley rank.
\newblock Preprint. arXiv:0801.3958, 2008.

\bibitem[BB12]{BBGroups}
Ay{\c{s}}e Berkman and Alexandre Borovik.
\newblock Groups of finite {M}orley rank with a pseudoreflection action.
\newblock {\em J. Algebra}, 368:237--250, 2012.

\bibitem[BC08]{BCPermutation}
Alexandre Borovik and Gregory Cherlin.
\newblock Permutation groups of finite {M}orley rank.
\newblock In {\em Model theory with applications to algebra and analysis.
  {V}ol. 2}, volume 350 of {\em London Math. Soc. Lecture Note Ser.}, pages
  59--124. Cambridge Univ. Press, Cambridge, 2008.

\bibitem[BC09]{BCSemisimple}
Jeffrey Burdges and Gregory Cherlin.
\newblock Semisimple torsion in groups of finite {M}orley rank.
\newblock {\em J. Math Logic}, 9(2):183--200, 2009.

\bibitem[BN94]{BNGroups}
Alexandre Borovik and Ali Nesin.
\newblock {\em Groups of finite {M}orley rank}, volume~26 of {\em Oxford Logic
  Guides}.
\newblock The Clarendon Press - Oxford University Press, New York, 1994.
\newblock Oxford Science Publications.

\bibitem[CD12]{CDSmall}
Gregory Cherlin and Adrien Deloro.
\newblock Small representations of {$\SL_2$} in the finite {M}orley rank
  category.
\newblock {\em J. Symbolic Logic}, 77(3):919--933, 2012.

\bibitem[Che79]{CGroups}
Gregory Cherlin.
\newblock Groups of small {M}orley rank.
\newblock {\em Ann. Math. Logic}, 17(1-2):1--28, 1979.

\bibitem[Che05]{CGood}
Gregory Cherlin.
\newblock Good tori in groups of finite {M}orley rank.
\newblock {\em J. Group Theory}, 8(5):613--621, 2005.

\bibitem[Del09a]{DActions}
Adrien Deloro.
\newblock Actions of groups of finite {M}orley rank on small abelian groups.
\newblock {\em Bull. Symb. Log.}, 15(1):70--90, 2009.

\bibitem[Del09b]{DSteinberg}
Adrien Deloro.
\newblock Steinberg's torsion theorem in the context of groups of finite
  {M}orley rank.
\newblock {\em J. Group Theory}, 12(5):709--710, 2009.

\bibitem[Del12]{Dprank}
Adrien Deloro.
\newblock {$p$}-rank and {$p$}-groups in algebraic groups.
\newblock {\em Turkish J. Math.}, 36(4):578--582, 2012.

\bibitem[DJ15]{DJInvolutive}
Adrien Deloro and {\'E}ric Jaligot.
\newblock Involutive automorphisms of ${N}_\circ^\circ$-groups.
\newblock In preparation, 2015.

\bibitem[Gro92]{GThere}
Ursula Gropp.
\newblock There is no sharp transitivity on {$q^6$} when {$q$} is a type of
  {M}orley rank {$2$}.
\newblock {\em J. Symbolic Logic}, 57(4):1198--1212, 1992.

\bibitem[Hru92]{HStrongly}
Ehud Hrushovski.
\newblock Strongly minimal expansions of algebraically closed fields.
\newblock {\em Israel J. Math.}, 79(2-3):129--151, 1992.

\bibitem[LW93]{LWCanada}
James Loveys and Frank Wagner.
\newblock Le {C}anada semi-dry.
\newblock {\em Proc. Amer. Math. Soc.}, 118(1):217--221, 1993.

\bibitem[MP95]{MPPrimitive}
Dugald Macpherson and Anand Pillay.
\newblock Primitive permutation groups of finite {M}orley rank.
\newblock {\em Proc. London Math. Soc. (3)}, 70(3):481--504, 1995.

\bibitem[Mus04]{MStructure}
Yerulan Mustafin.
\newblock Structure des groupes lin\'eaires d\'efinissables dans un corps de
  rang de {M}orley fini.
\newblock {\em J. Algebra}, 281(2):753--773, 2004.

\bibitem[Poi01a]{PQuelques}
Bruno Poizat.
\newblock Quelques modestes remarques \`a propos d'une cons\'equence inattendue
  d'un r\'esultat surprenant de {M}onsieur {F}rank {O}laf {W}agner.
\newblock {\em J. Symbolic Logic}, 66(4):1637--1646, 2001.

\bibitem[Poi01b]{PStable}
Bruno Poizat.
\newblock {\em Stable groups}, volume~87 of {\em Mathematical Surveys and
  Monographs}.
\newblock American Mathematical Society, Providence, RI, 2001.
\newblock Translated from the 1987 French original by Moses Gabriel Klein.

\bibitem[Ten03]{TSplit}
Katrin Tent.
\newblock Split {$BN$}-pairs of finite {M}orley rank.
\newblock {\em Ann. Pure Appl. Logic}, 119(1-3):239--264, 2003.

\bibitem[Tim90]{TIdentification}
Franz~Georg Timmesfeld.
\newblock On the identification of natural modules for symplectic and linear
  groups defined over arbitrary fields.
\newblock {\em Geom. Dedicata}, 35(1-3):127--142, 1990.

\bibitem[TN13]{TEtude}
Jules Tindzogho~Ntsiri.
\newblock {\em {\'E}tude de quelques liens entre les groupes de rang de
  {M}orley fini et les groupes alg{\'e}ébriques lin{\'e}aires}.
\newblock PhD thesis, Universit{\'e} de Poitiers, Poitiers, 2013.

\bibitem[Wag01]{WFields}
Frank Wagner.
\newblock Fields of finite {M}orley rank.
\newblock {\em J. Symbolic Logic}, 66(2):703--706, 2001.

\bibitem[Wis14]{WRank4}
Josh Wiscons.
\newblock Groups of {M}orley rank 4.
\newblock Preprint. arXiv:1401.2373, 2014.

\end{thebibliography}

\end{document}